\newcommand{\overbar}[1]{\mkern 2.0mu\overline{\mkern-4mu#1\mkern-2.0mu}\mkern 2.0mu}
\newcommand{\overbarr}[1]{\mkern 2.0mu\overline{\mkern-3mu#1\mkern-2.0mu}\mkern 3.0mu}
\definecolor{bblue}{rgb}{.2,0.2,.8}
\theoremstyle{plain}
\newtheorem{theorem}{Theorem}[section]
\newtheorem{proposition}[theorem]{Proposition}
\newtheorem{lemma}[theorem]{Lemma}
\newtheorem{corollary}[theorem]{Corollary}
\theoremstyle{definition}
\newtheorem{definition}[theorem]{Definition}
\theoremstyle{remark}
\newtheorem{remark}[theorem]{Remark}
\newcommand{\T}{\overbarr{T}}
\DeclareMathOperator{\tdeg}{tdeg}
\numberwithin{equation}{section}
\numberwithin{theorem}{section}
\def\be{\begin{equation}}
	\def\ee{\end{equation}}
\def\bp{\begin{pmatrix}}
	\def\ep{\end{pmatrix}}
\def\bea{\begin{eqnarray}}
	\def\eea{\end{eqnarray}}
\def\\{\par\medskip}
\newcommand*{\defeq}{\mathrel{\vcenter{\baselineskip0.5ex \lineskiplimit0pt
			\hbox{\scriptsize.}\hbox{\scriptsize.}}}%
	=}
\renewcommand{\epsilon}{\varepsilon}
\renewcommand{\phi}{\varphi}
\renewcommand{\hat}{\widehat}
\newcommand{\Z}{\mathbb{Z}}
\newcommand{\Q}{\mathbb{Q}}
\newcommand{\C}{\mathbb{C}}
\newcommand{\N}{\mathrm{N}}
\newcommand{\Lie}{\mathfrak{L}}
\newcommand{\der}{\partial}
\newcommand{\Set}[1]{\left\{#1\right\}}
\DeclareMathOperator{\wdd}{wd}
\DeclareMathOperator{\wt}{wt}
\DeclareMathOperator{\lev}{lev}
\DeclareMathOperator{\AGL}{AGL}
\DeclareMathOperator{\frk}{frk}
\DeclareMathOperator{\M}{M}
\DeclareMathOperator{\W}{W}
\DeclareMathOperator{\Sym}{Sym}
\DeclareMathOperator{\re}{Re}
\DeclareMathOperator{\LP}{\textit{VP}}
\title{Transfinite hypercentral iterated wreath product of integral domains}
\author{Riccardo Aragona}
\address{\noindent Riccardo Aragona \hfill\break\indent 
	DISIM, Universit\`a dell'Aquila
	\hfill\break\indent 
	67100 Coppito, L'Aquila, Italy
}
\email{riccardo.aragona@univaq.it}
\author{Norberto Gavioli}
\address{\noindent Norberto Gavioli \hfill\break\indent 
	DISIM, Universit\`a dell'Aquila
	\hfill\break\indent 
	67100 Coppito, L'Aquila, Italy
}
\email{norberto.gavioli@univaq.it}
\author{Giuseppe Nozzi}
\address{\noindent Giuseppe Nozzi \hfill\break\indent 
	DISIM, Universit\`a dell'Aquila
	\hfill\break\indent 
	67100 Coppito, L'Aquila, Italy
}
\email{giuseppe.nozzi@graduate.univaq.it}
\begin{document}
\subjclass[2010]{20E22; 20B35;  20E15; 20F19;  17B60; 05A17} \keywords{Wreath
  product;  Regular abelian  subgroups; Normalizer  chain; Transfinite
  hypercentral groups; Integer partitions; Lie rings}
	
\thanks{All the authors are members of INdAM-GNSAGA (Italy); the first
  and  the second  authors are  members  of the  center of  excellence
  ExEmerge of the University of L'Aquila. The authors thankfully acknowledge support by MUR-Italy via PRIN 2022RFAZCJ "Algebraic methods in Cryptanalysis".}
\begin{abstract}
  Starting  with an  integral  domain $D$  of  characteristic $0$,  we consider a class of iterated  wreath  product $W_n$  of  $n$  copies  of
  $D$. In order that  $W_n$  be  transfinite hypercentral, it is  necessary to
  restrict to the case of wreath products defined by way of numerical
  polynomials. We also associate to each of  these groups a Lie ring, providing a correspondence preserving most of the structure. This construction generalizes a result of \cite{netreba} which characterizes the Lie algebras associated to the Sylow \(p\)-subgroups of the symmetric group \(\Sym(p^n)\).  
  As an application,  we explore the
  normalizer    chain   $\lbrace\mathbf{N}_{i}\rbrace_{i\geq    -1}$
  starting from the  canonical regular abelian subgroup  $T$ of $W_n$. Finally,
  we   characterize   the   regular  abelian   normal   subgroups   of
  $\mathbf{N}_0$ that are isomorphic to  $D^n$.
\end{abstract}
	
\noindent

\maketitle
\thispagestyle{empty}

\section{Introduction}
In a  recent paper,  Aragona et al.~\cite{char0algebra}  introduce the
 \emph{integral  ring of  partitions}  $\Lie_n$ defined as the  iterated
wreath product $\wr^{n}{\Z}$  of \(n\) copies of the  trivial Lie ring
\(\Z\), seen as a subring of the Witt Lie algebra. After defining a  chain of
idealizers  starting from  the  canonical regular  abelian subring  of
$\wr^{n}{\Z}$, they establish a relationship between the \emph{growth}
of    this   chain    and   integer    partitions   with    parts   in
$\lbrace  1,\dots,n-1\rbrace$.  Inspired by that paper, in  this
work we consider  the iterated wreath product of \(n\)  copies of the
additive group  of an integral  domain of characteristic  $0$. 
In particular, let $D$ be an integral domain of
characteristic  $0$. For $n\geq 2$ and \(1\le i \le n-1\), we consider    a special class of translation-invariant  additive
subgroup $R_{i}$ of $D^{D^{i}}$. We define the group $W_n$ recursively as the
wreath product
\[
  D\wr_{R_{n-1}} W_{n-1} \defeq R_{n-1} \rtimes W_{n-1}. \ 
\]
We start focusing on the upper central series of $W_n$, showing that a
necessary  and  sufficient  condition  for  $W_n$  to  be  transfinite
hypercentral, is that  $R_i$ is contained in a special subring \(\LP_i\) of \(D^{D^i}\), which we shall call \emph{ring of virtual polynomials} (see Definition~ \ref{def:virtual_poly} below). When \(D=\Z\), the ring \(\LP_i\)  coincides with the ring  $P_i$  of \emph{numerical polynomials}, i.e., polynomials in $i$ indeterminates with coefficients in  the fraction field \(F\) of \(D\), taking values in \(D\) when  evaluated over  \(D\), see also~\cite{polnum}.
 In  particular, when $R_i \otimes_D F =  F[x_1,\dots, x_i]$ 
for
$i   =   1,   \dots,   n-1$, we   prove   that   \(W_n\)   is   the 
\((\omega^{n-1}+\dots+\omega+1)\)-th term  of  its upper central series, where \(\omega\) is the first limit ordinal.
 The nilpotency of wreath products and their central series is a classical topic of research and has been addressed in many papers, e.g., \cite{baumslag, meldrum, neumann,polnum}.  

Next,    we    restrict    our    attention   to    the    case    when \(R_i=D[x_1,\dots, x_{i}]\).  We find a map $\phi$ that relates the group $W_n$ and the basis of the Lie ring $\Lie_n$ over \(D\). Using this correspondence, we prove that this ring is also transfinite hypercentral with the same transfinite hypercentral class. We also generalize the concept of saturated subgroups given in \cite{LieRigid} finding a natural counterpart to  homogeneous subrings as introduced in \cite{char0algebra}. 
The structure of the lattices of the saturated normal subgroups of \(W_n\) and of the homogeneous ideals of \(\Lie_{n}\) are related by the map \(\phi\). We actually prove that  \(W_n\) and \(\Lie_{n}\) are\fxnote{uno solo: o \emph{actually} o  \emph{indeed}} uniserial, i.e., every saturated normal subgroup, respectively homogeneous ideal, is a term of the transfinite upper central series. 

In \cite{char0algebra} the chain of idealizers originating from the canonical regular abelian subalgebra is studied providing a description of the growth of its factors. We observe that the map $\phi$ sends normalizers to idealizers when restricted to saturated subgroups. As a consequence we find a complete analogous result for the corresponding chain of normalizers \(\Set{\mathbf{N}_i}\) in the group \(W_n\). In particular the growth of this chain is related to the integer partitions\fxnote{\emph{integer partitions} o \emph{partition of integers}?} with parts of size at most \(n-1\). The study of iterated normalizers  of regular subgroups of the symmetric groups,  i.e., their multiple holomorphs, has been studied since a long time ago. A paper of Miller has been published in 1908 \cite{miller}, and this topic is still a subject of recent research, see e.g., \cite{jgt24,jpaa25}.

We  conclude  this  work  by  giving a  canonical  parametric  set  of
generators  of  the  regular  abelian  subgroups  of  $\mathbf{N}_0$, proving that  this family of groups can be parameterized by
way of the  elements of \(D\). These groups actually  form one or two conjugacy classes under the action of \(\mathbf{N}_1\),
according to the invertibility of  \(2\) in \(D\). This
result      is     a      \(0\)-characteristic     counterpart      of
\cite[Theorem~6]{regular}. The  study of  pairwise normalizing  regular subgroups  of the
symmetric  group has  also been  carried out  using  the
structure   of   bi-braces, e.g., see  \cite{caranti}.

\section{Preliminaries}
\subsection{Notation}
We will denote by $D$ an  integral domain of characteristic $0$ and by
$F$ the fraction field of
$D$.
When  \(\alpha\)  is  an  ordinal,  $Z_{\alpha}(G)$  will  denote  the
$\alpha$-th term of the transfinite upper central series of the group $G$. We will consistently 
use the letter \(\omega\) to denote the first limit ordinal.
	
To avoid confusion, we warn the  reader that additive notation is used
when operations  occur within  the same base  abelian subgroup  of the
wreath  product,   and  multiplicative  notation  is   used  when  the
operations involve elements of different base subgroups.
\subsection{Wreath products}
We start by recalling the general construction of wreath product first introduced by Krasner and Kaloujnine in~\cite{kal1,kal2,kal3}.
Let $K$ and $H$  be two groups, and let \(H\) act on a set $\Gamma$. 
The  unrestricted wreath product of  \(K\) and \(H\) is defined as
\begin{equation*}
  K\wr H\defeq K^\Gamma\rtimes H ,
\end{equation*}
where  $K^\Gamma$  is the  group  of  function  from $\Gamma$  to  $K$, 
equipped    with    the    point-wise   product    operation. That is, for all \(\gamma\in \Gamma\), 
\(\left(f
  g\right)\left(\gamma\right)=f\left(\gamma\right)g\left(\gamma\right).\)
 The group $H$ acts by translation on ${K^\Gamma}$ as follows
\begin{equation*}
  f^h\left(\gamma\right)\defeq f(\gamma^{h^{-1}}), \quad \gamma\in \Gamma,\ h\in H. 
\end{equation*}
The product on $K\wr H$ is defined by
\begin{equation*}
  \left(f_1,h_1\right)\cdot\left(f_2,h_2\right)=\left(f_1 f_2^{h_1^{-1}},h_1 h_2\right).
\end{equation*}
\begin{definition}\label{wreathristretto}
	Let \(K,H\) be two groups. If $L\le {K^H}$ is an $H$-invariant subgroup, we define
	\begin{equation*}
		K\wr_{L}H\defeq L\rtimes H.
	\end{equation*}
	When  \(L=K^H\) we obtain the classical notion of unrestricted standard wreath product and, in this case, we  will  omit the  subscript \(L\). The base  subgroup of \(K\wr_L  H\) is  as usual defined by $B=\lbrace \left(f,1_H\right)\mid f\in L\rbrace\cong L$.
\end{definition}
Let \(D\) be an integral  domain of characteristic \(0\) with fraction
field  \(F\).   We  shall  denote  by  \(R_i\) an  additive subgroup  of  \(D^{D^i}\) containing a copy of \(D\) seen as the subring of constant functions. We ask furthermore that \(D^i\) acts faithfully on \(R_i\) by translations and that \(f(g_0,\dots , g_{i-1})\in R_i\) whenever \(f\in R_i\) and \(g_j\in R_{j}\). 

In  this  work we  will
consider the multiple wreath product
\begin{equation*}
  W_n= D\wr_{R_{n-1}}D\wr_{R_{n-2}} \dots \wr_{R_1}D
\end{equation*}
of \(n\) copies of \(D\).  The  group \(W_n\) is a permutation group on
the set $D^n$.
	
Let \(f\in R_{k-1}\), we shall denote by $f\Delta_k$ an element in the
base group $B_k$ of
\[W_k=D\wr_{R_{k-1}} W_{k-1}\] corresponding  to the function \(f\)
when regarded  as an element of  \(B_k\). The elements of  \(W_n\) are
then   \(n\)-tuples   \((f_{n-1}\Delta_n,\dots,  f_0\Delta_1)\)   with
\(f_i\in  R_i\).  It is  worth  noting  that  \(R_0\) is  an  additive
subgroup of \(D\).   The action of \(W_n\) on \(D^{n}\)  is defined as
follows
	\begin{multline*}
		\left(x_1,x_2,\ldots,x_n\right)\cdot \left(f_{n-1}\Delta_n,\ldots,f_1\Delta_2,f_0\Delta_1\right)=\\
		\left(x_1-f_0,x_2-f_{1}\left(x_1\right),                     \ldots,
		x_n-f_{n-1}\left(x_1,\ldots,x_{n-1}\right)\right).
\end{multline*}

Every base subgroup \(B_k\) is endowed with a structure of \(D\)-module. The action of \(D\) on \(B_k\) is given by the multiplication on the value of a function. More in details, we have \(d\cdot (f\Delta_k)=(df)\Delta_k\), where \(d\in D\). We extend this multiplication action to the whole group by letting \(d\prod_{k=n}^1 f_k\Delta_k=\prod_{k=n}^1 df_k\Delta_k\).

\begin{definition}
		Let $G\le W_n$, we shall say that $G$ is a $D$-subgroup of $W_n$ if 
		\(G\) is closed under multiplication by elements of \(D\), i.e.,
		\[G=\Set{dg \mid d\in D, g\in G}=D G.\]
	\end{definition}
	All the subgroups of $W_n$ considered in this paper will be $D$-subgroups, unless otherwise stated.
\subsection{Transfinite hypercentral groups}
Let $G$ be a group, we shall denote by \(Z_i(G)\) the \(i\)-th term of
the     upper     central     series      of     \(G\),     and     by
\(Z_{\omega}(G)=\bigcup_{i=1}^\infty   Z_i(G)\)  the   hypercenter  of
\(G\), where  \(\omega\) is  the first  limit ordinal.  More generally
given an ordinal \(\alpha\) then
\begin{equation*}
  Z_{\alpha}(G)=\begin{cases}
    \{g\in G \mid [g,z]\in Z_{\alpha-1}(G) \text{ for all } z\in Z_{\alpha-1}(G)\} & \text{if \(\alpha\) is a non-limit ordinal} \\
    \bigcup_{\beta < \alpha}Z_{\beta}(G) & \text{otherwise} 			
  \end{cases}
\end{equation*}
We recall that a group \(G\) is said to be transfinite hypercentral if
\(G=Z_{\alpha}(G)\) for some, possibly  limit, ordinal \(\alpha\). See
also \cite[Chap. 12]{robinson} for more details.
\subsection{Difference operators}\label{subDelta}
Let  \(x\in D^j\)  and \(e_i\)  be the \(i\)-th  vector of  the canonical
basis  of  \(D^j\).   For  each  $h\in  D$,  we  define  the  operator
$\Delta_i(h)\colon          R_j          \to          R_j$          by
\begin{equation}\label{eq:deltaoperator}
\Delta_i(h)f  (x)= f(x+he_i)-f(x).
\end{equation}
It is worth to highlight that we denote by \(f\Delta_k\) an element in the \(k\)-th base subgroup \(B_k\) of \(W_n\), and  with a slight abuse of notation, we will also use the same symbol \(\Delta\) to denote the operator defined in Equation~\eqref{eq:deltaoperator}. It allow us to conveniently express the commutator between two elements \(f\Delta_k\) and \(g\Delta_u\) of the group. In particular, if \(k>u\) we have
\begin{align*}
	[f\Delta_k,g\Delta_u]&=(f\Delta_k)^{-1}(g\Delta_u)^{-1}(f\Delta_k)(g\Delta_u)\\
	&=(-f\Delta_k)(-g\Delta_u)(f\Delta_k)(g\Delta_u)\\
	&=(-f+f^{-g\Delta_u})\Delta_k=(\Delta_u\left(g\right)f)\Delta_k.
\end{align*}
In general, recalling that \(B_k\) is an abelian subgroup of \(W_n\), we obtain  
\begin{equation}\label{parallelism}
	\left[f\Delta_k,g\Delta_u\right]=\begin{cases}
		\left(\Delta_u\left(g\right)f\right)\Delta_k&\text{if } k>u\\
		-\left(\Delta_k\left(f\right)g\right)\Delta_u&\text{if } u>k\\
		0&\text{if } k=u
	\end{cases}
\end{equation}
Moreover if $k>u$ and \(f\) is a polynomial, by Taylor formula, we have
\begin{equation}\label{eqTaylor}
	[f\Delta_k,g\Delta_u]=\mathlarger{\sum}_{s=1}^\infty \frac{1}{s!}\frac{\der^s f}{\der x_u^s}g^s\Delta_k.
\end{equation}

	
Although the following result is well known, we sketch a proof for the convenience
of the reader
\begin{lemma}\label{lem:finite_diffs_on_Z}
  If \(f\in D^\Z\), then  $\Delta_1\left(1\right)^{k}f=0$ if and only if
  $f$ is a polynomial in \(F[x]\) of degree at most $k-1$.
\end{lemma}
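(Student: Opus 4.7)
The proof proceeds by induction on $k$, handling the two directions separately.

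For the ``if'' direction, I would observe that $\Delta_1(1)$ sends a polynomial of degree $d \geq 1$ in $F[x]$ to a polynomial of degree $d-1$, since the leading terms in $f(x+1)-f(x)$ cancel. A trivial induction then gives $\Delta_1(1)^k f = 0$ whenever $\deg f \leq k-1$.

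For the ``only if'' direction, which is the substantive one, I would argue by induction on $k$. The base case $k=1$ is immediate: $\Delta_1(1) f = 0$ means $f(x+1)=f(x)$ for all $x \in \Z$, so $f$ is constant and hence a polynomial of degree $\leq 0$. For the inductive step, assume the statement for $k-1$ and suppose $\Delta_1(1)^k f = 0$. Then $\Delta_1(1)^{k-1}(\Delta_1(1) f) = 0$, so by induction $\Delta_1(1) f = p$ for some $p \in F[x]$ with $\deg p \leq k-2$.

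The key step is to produce an antiderivative for $\Delta_1(1)$ at the level of polynomials. I would use the binomial polynomials $\binom{x}{j} = \frac{x(x-1)\cdots(x-j+1)}{j!} \in F[x]$, which form an $F$-basis of $F[x]$ and satisfy the classical identity $\Delta_1(1)\binom{x}{j+1} = \binom{x}{j}$. Writing $p = \sum_{j=0}^{k-2} c_j \binom{x}{j}$ with $c_j \in F$, I set $g = \sum_{j=0}^{k-2} c_j \binom{x}{j+1} \in F[x]$; then $\deg g \leq k-1$ and $\Delta_1(1) g = p$. Consequently $\Delta_1(1)(f-g) = 0$, and by the base case $f-g$ is a constant $c \in F$ (a priori in $F$, since $g$ takes values in $F$), so $f = g + c \in F[x]$ has degree at most $k-1$, as required.

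The main subtlety is that even though $f$ takes values in $D$, the polynomial expressing $f$ may only have coefficients in the fraction field $F$ — the binomial basis makes this transparent and is exactly why the statement allows $F[x]$ rather than $D[x]$. Nothing else in the argument is delicate; everything reduces to the antidifference formula for $\binom{x}{j}$ and the base case.
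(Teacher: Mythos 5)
Your proof is correct, and the forward direction coincides with the paper's (the difference operator lowers degree, then induct). For the substantive converse, however, you take a different route: the paper observes that $\Delta_1(1)^k f=0$ unfolds to the homogeneous linear difference equation $\sum_{i=0}^k\binom{k}{i}(-1)^i f(x+k-i)=0$ with constant coefficients and simply cites the classical theory (Jordan-style finite differences) for the fact that its solutions are the polynomials $c_1+c_2x+\dots+c_{k-1}x^{k-1}$ with $c_i\in F$; you instead give a self-contained induction on $k$, using the Newton basis $\binom{x}{j}$ and the antidifference identity $\Delta_1(1)\binom{x}{j+1}=\binom{x}{j}$ to lift a solution of $\Delta_1(1)f=p$ back to a polynomial, reducing everything to the trivial base case. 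Your argument is essentially the standard proof hiding behind the paper's citation, so it buys self-containedness (and makes explicit why coefficients land in $F$ rather than $D$, as you note), at the cost of a bit more text; the paper's version is shorter but outsources the key step to a reference. One small point worth making explicit if you polish this: in the inductive step you apply the hypothesis to $\Delta_1(1)f$, which is legitimate precisely because it again takes values in $D$, and the final appeal to the base case is applied to $f-g$, which is only $F$-valued — as you observe, periodicity on $\Z$ forces constancy regardless of the codomain, so no harm is done.
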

\begin{proof}
  Let  $f\in F[x]$  be  a polynomial  of degree  at  most $k-1$,  then
  $f=\sum_{i=0}^{k-1} a_ix^i$. Thus
  \begin{align*}
    g(x)=\Delta_1\left(1\right)f\left(x\right)&=f\left(x+1\right)-f\left(x\right)\\
                                              &=\sum_{i=0}^{k-1} a_i\left(x+1\right)^i-\sum_{i=0}^{k-1}a_ix^i
  \end{align*}
  is  a   polynomial  of   degree  at   most  \(k-2\).   By  induction
  \(\Delta_1\left(1\right)^{k}f=\Delta_1\left(1\right)^{k-1}g=0\).
		
  Conversely let $f:D\to D$  be such that $\Delta_1(1)^k(f)=0$. Notice
  that
  \begin{equation*}
    0=\Delta_1(1)^k f(x)=\sum_{i=0}^k\binom{k}{i}\left(-1\right)^i f\left(x+k-i\right)
  \end{equation*}
  is  an   homogeneous  linear   difference  equation   with  constant
  coefficients whose solutions are
  \begin{equation*}
    f\left(x\right)=c_1+c_2 x+\ldots+c_{k-1}x^{k-1},
  \end{equation*}
  where  \(c_i\in   F\)  (for   a  detailed   proof  see~\cite[Chapter
  VI]{differences}).
\end{proof}
	
\subsection{Power monomials}\label{power monomials}
We  introduce some notation  as  in~\cite{char0algebra}.   Let
$\Lambda=\{\lambda_i\}_{i=1}^\infty$  be  a sequence  of  non-negative
integers with finite support and weight
\begin{equation}\label{defwt}
  \mathrm{wt}(\Lambda)\defeq \sum_{i=1}^\infty i\lambda_i< \infty,
\end{equation}
we   shall   say  that   $\Lambda$   is   a   partition  of   $N$   if
$\mathrm{wt}(\Lambda)=N$. The maximal part of $\Lambda$ is the integer
$\max\{i\mid  \lambda_i\neq 0\}$.   The  set of  the partitions  whose
maximal  part   is  less  than   or  equal   to  $k$  is   denoted  by
$\mathcal{P}(k)$. The power monomial  $x^\Lambda$ , where $\Lambda$ is
a partition, is defined as
\begin{equation*}
  x^\Lambda=\prod_{i=1}^\infty x_i^{\lambda_i}.
\end{equation*}
Notice    that    $x^\Lambda\Delta_k$    is    well    defined    when
$\Lambda\in\mathcal{P}(k-1)$.  We set
\begin{equation}\label{eq: base group}
	\mathcal{B}=\lbrace x^\Lambda\Delta_k\mid 1\leq k\leq n,\
  \Lambda\in \mathcal{P}(k-1)\rbrace.
\end{equation}
We call monomial elements  those of the form \(dx^\Lambda\Delta_k\), where \(d\in D\). The elements of \(\mathcal{B}\) are precisely the monic monomial elements.
	
\begin{definition}\label{def:tdeg}
  Let  $k\geq 1$  be an  integer, \(0\ne d\in D\) and  $\Lambda\in \mathcal{P}(k)$,  we
  define the  \emph{transfinite degree}  of the  monomial $dx^\Lambda$,
  written $\tdeg(x^\Lambda)$, by
  \begin{equation*}
    \tdeg(dx^\Lambda)=\omega^{n-2}\lambda_{n-1}+\dots+\omega\lambda_2+\lambda_1.
  \end{equation*}
  We set 
  \[\tdeg(dx^\Lambda\Delta_k)=         \sum_{i=1}^{n-k}\omega^{n-i}        +
  \tdeg(x^\Lambda)\] 
  where, for $k=n$, the sum has to be intended empty. In general, the transfinite degree of \(x^\Lambda\Delta_k\) does not include the term \(\omega^{k-1}\).
\end{definition}
 It is  easy  to  see  that  for  every  non-limit  ordinal  \(\alpha\),
satisfying \( 0\le \alpha \le  \sum_{i=1}^{n} \omega^{n-i}\), there exists a
unique monic   monomial  element   \(b_\alpha\in  \mathcal{B}\)   such  that
\(\tdeg b_\alpha=  \alpha\).  Notice  that this definition  endows the
set $\mathcal{B}$ with a total  ordering.  In particular, every element
$f\in W_n$ can be uniquely  decomposed as $ f=\prod_\alpha f_\alpha$,
where   $f_\alpha=c_\alpha  b_\alpha$, with \(c_\alpha\in D\), and \(b_\alpha\in \mathcal{B}\) is  the unique monic monomial   element with
transfinite degree equal to $\alpha$.   We shall denote by $\M(f)$ the
\emph{leading  term}  of  a  non-identity  element  \(f\),  i.e.   the
monomial appearing in  $f$ with non-zero coefficient  and with maximum
transfinite  degree. We define \[\tdeg(f)=
\begin{cases}
	\tdeg(\M(f)) & \text{if } f \text{ is not constant}, \\ 
	0 & \text{otherwise}.
\end{cases}
\]  We  shall write  \(f  \prec g  \) to mean that
\(\tdeg (f) < \tdeg(g)\).


\begin{lemma}\label{lem:maxmon}
  Let $x^{\Lambda}\Delta_k,x^{\Theta}\Delta_u\in W_n$ be two monomials
  such that $k>u$. The following equality holds
  \begin{equation*}
    \mathrm{M}\left([x^{\Lambda}\Delta_k,x^{\Theta}\Delta_u]\right)=\frac{\der x^{\Lambda}}{\der x_u}x^{\Theta}\Delta_k.
  \end{equation*}
\end{lemma}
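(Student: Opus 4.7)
The plan is to start from the Taylor expansion given in equation~\eqref{eqTaylor}, which yields
\[
  [x^{\Lambda}\Delta_k, x^{\Theta}\Delta_u] = \sum_{s=1}^{\infty} \frac{1}{s!} \frac{\partial^s x^{\Lambda}}{\partial x_u^s} (x^{\Theta})^s \Delta_k,
\]
and then to show that, among all non-zero summands on the right, the term corresponding to $s = 1$ carries the strictly largest transfinite degree. Once this is established, the identification of $\M$ with this summand is immediate.

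First I would write the $s$-th summand as a scalar multiple of a monic monomial element. Since $\Theta \in \mathcal{P}(u-1)$ we have $\theta_i = 0$ for all $i \geq u$, and since $\Lambda \in \mathcal{P}(k-1)$ we have $\lambda_i = 0$ for $i \geq k$; thus the underlying monomial of the $s$-th summand is
\[
  x_u^{\lambda_u - s} \prod_{i < u} x_i^{\lambda_i + s\theta_i} \prod_{u < i < k} x_i^{\lambda_i},
\]
provided $s \leq \lambda_u$ (otherwise $\partial^s x^{\Lambda}/\partial x_u^s$ vanishes). By Definition~\ref{def:tdeg}, after attaching $\Delta_k$ its transfinite degree equals
\[
  \sum_{i=1}^{n-k} \omega^{n-i} + \omega^{u-1}(\lambda_u - s) + \sum_{i<u} \omega^{i-1}(\lambda_i + s\theta_i) + \sum_{u < i < k} \omega^{i-1}\lambda_i.
\]

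To conclude, I compare $s = 1$ with any admissible $s \geq 2$. Increasing $s$ strictly decreases the coefficient of $\omega^{u-1}$, while the only coefficients that can grow are those attached to powers $\omega^{i-1}$ with $i < u$. Since any finite ordinal combination of such lower-order terms is strictly less than $\omega^{u-1}$, the overall transfinite degree strictly decreases with $s$. Therefore the $s = 1$ summand, namely $\frac{\partial x^{\Lambda}}{\partial x_u} x^{\Theta} \Delta_k$, is the leading monomial. The principal technical point, which is essentially trivial but worth stating, is precisely this ordinal domination $\omega^{u-1} > \sum_{i<u} \omega^{i-1} c_i$ for finite $c_i$; the only edge case is $\lambda_u = 0$, where both sides of the claim vanish and the statement is vacuously consistent with the convention that $\M$ applies only to non-identity elements.
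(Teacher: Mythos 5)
Your proposal is correct and follows essentially the same route as the paper: expand via Equation~\eqref{eqTaylor}, note the degenerate case $\lambda_u=0$, and show the $s=1$ summand dominates in transfinite degree because lowering the exponent of $x_u$ cannot be compensated by raising exponents of variables $x_i$ with $i<u$ (the paper phrases this as the comparison of consecutive summands $s$ and $s-1$, you compare $s=1$ with each $s\ge 2$ directly, which is an inessential difference).
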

\begin{proof}
  By Equation \ref{eqTaylor} we get
  \begin{equation*}
    [x^{\Lambda}\Delta_k,x^{\Theta}\Delta_u]=\sum_{s=1}^\infty \frac{\der^s x^{\Lambda}}{\der x_u^s}\frac{(x^{\Theta})^s}{s!}\Delta_k.
  \end{equation*}
    Notice  that         if          $\lambda_u=0$,         then
  $[x^{\Lambda}\Delta_k,x^\Theta\Delta_u]=0$. Otherwise, it suffices to prove that for $s\geq 1$
  \begin{equation}\nonumber
    \frac{\der^s x^{\Lambda}}{\der x_u^s}\frac{(x^{\Theta})^s}{s!}\prec \frac{\der^{s-1} x^{\Lambda}}{\der x_u^{s-1}}\frac{(x^{\Theta})^{s-1}}{(s-1)!}.
  \end{equation}
  We denote by $x^{\bar{\Lambda}}$ the monomial $x^{\Lambda}$ with the
  variable $x_u$  removed and by  $\lambda_u$ the exponent  with which
  $x_u$ appears in the monomial $x^{\Lambda}$. We get
  \begin{align*}
    \frac{\der^s x^{\Lambda}}{\der x_u^s}\frac{(x^{\Theta})^s}{s!}&=c_1x^{\bar{\Lambda}} x^{s\Theta} x_u^{\lambda_u-s}\\
                                                                  &\prec c_1 x^{\bar{\Lambda}} x^{(s-1)\Theta} x_u^{\lambda_u-(s-1)}\\
                                                                  &=c_2 \frac{\der^{s-1} x^{\Lambda}}{\der (x_u^{s-1})}\frac{(x^{\Theta})^{s-1}}{(s-1)!}.
  \end{align*}
  for suitable \(c_1,c_2\in D\). The claim follows since the \(\tdeg\) is invariant under multiplication by constants.
\end{proof}

\subsection{The integral ring of partition}\label{def:Lie Ring}
We  briefly recall  the  construction of  the   \emph{Lie $D$-ring  of
  partition} as done in~\cite{char0algebra}. This ring can be seen as an iterated wreath product of the onedimensional Lie ring \(D\). The reader is referred to \cite{MR2262857} for further details on wreath products of Lie algebras.

We  denote by  \(\partial_k\)  the derivation  given  by the  standard
partial derivative with respect to $x_k$,  where $1 \leq k \leq n$ and
we define \(\Lie_n\) to be the free \(D\)-module spanned by the basis
\begin{equation}\label{eq:base algebra}
	\mathfrak{B}:=\Set{x^\Lambda\partial_k \mid 1\le k \le n
		\text{ and } \Lambda\in \mathcal{P}(k-1)}.
\end{equation}
In the same fashion of Definition~\ref{def:tdeg} we define the transfinite degree of an element \(x^\Lambda\partial_k\) in \(\mathfrak{B}\) as  $\tdeg(x^\Lambda\partial_k)=         \sum_{i=1}^{n-k}\omega^{n-i}        +
\tdeg(x^\Lambda)$ and \(\tdeg(0)=0\). In the same way as for the group \(W_n\) the function \(\tdeg\) can be extended to \(\Lie_n\).

The product of $\Lie_{n}$
is defined on the basis \(\mathcal{B}\) via
\begin{eqnarray*}
  \left[x^\Lambda \partial_k , x^\Theta \partial_j\right] :=&
                                                              \partial_{j}(x^\Lambda)  x^\Theta \partial_k - x^\Lambda \partial_{k}(x^\Theta) \partial_j \nonumber \\
  =&
     \begin{cases}
       \partial_{j}(x^\Lambda)  x^\Theta \partial_k & \text{if \(j<k\)}, \\
       - x^\Lambda \partial_{k}(x^\Theta) \partial_j & \text{if \(j>k\)},\\
       0 & \text{otherwise.}
     \end{cases}
\end{eqnarray*} 
This operation is then extended  by bilinearity endowing \(\Lie_n\) of the structure of Lie \(D\)-ring. We   highlight    the   parallel    between   this    definition   and
Equation~\ref{parallelism}  which   defines  the  commutator   of  two
elements of $W_n$. Notice that $\Lie_n$ can be identified with a \(D\)-subring of the Witt algebra over  \(F\) (see e.g.~\cite[Chapter 2]{strade}) having $\mathfrak{B}$ as a basis.

\section{Transfinite hypercentral series of $W_n$}
In this section, we aim to provide necessary and sufficient conditions on the modules \(R_i\) in order for \(W_n\) to be transfinitely hypercentral. 

Let \(f \colon D^i \to D\), \(d = \sum_{j=1}^i c_j e_j\), and \(d_j = \sum_{s=1}^{j-1} c_s e_s\). We define
\begin{equation}\label{eq:increment}
	\Delta(d)(f) \defeq f_d-f= \sum_{j=1}^i \Delta_j(c_j)(f_{d_j}),
\end{equation}
where \(f_{c}(x) = f(x + c)\).

\begin{definition}\label{def:virtual_poly}
	For \(i \ge 0\),	the subring \(\LP_i \) of the virtual polynomials of \(D^{D^i}\)  is defined  as 
\begin{equation}
	\LP_i = \left\{
	f \in D^{D^i}\,\middle|\,
	\exists k \in \mathbb{N} \text{ s.t. }  
	(\Delta(d_1) \cdots \Delta(d_k))(f) = 0 \text{ for all } d_1, \dots, d_k \in D^i
	\right\}
\end{equation}
A virtual polynomial \(f\in \LP_i\) is said to be of class \(k\), whenever \(k\) is the minimum non-negative integer such that \((\Delta(d_1) \cdots \Delta(d_k))(f) = 0 \text{ for all } d_1, \dots, d_k \in D^i\). 
\end{definition}

\begin{remark}\label{rem:poly1}
	Let \( f \in \LP_k \) be of class \(\ell\).  
	By Lemma~\ref{lem:finite_diffs_on_Z}, if \( x, d \in D^k \), then the function \(\hat{f}_{x,d} \colon \Z \to D\) defined by  
	\[
	\hat{f}_{x,d}(m) = f(x + m d)
	\]  
	is a polynomial function in \(m\) of degree at most \(\ell - 1\), with coefficients in \(F\); that is, \(f\) behaves like a polynomial along affine integral lines.  
	We will use the same notation \(\hat{f}_{x,d}\) to denote the natural extension of this function to a map \(\hat{f}_{x,d} \colon \Q \to F\). In particular, \(\hat{f}_{x,d}\) is a polynomial function with coefficients in \(F\) such that \(\hat{f}_{x,d}(\Z) \subseteq D\).  
	
	In general, we denote by \(P_k\) the subring of \(F[x_1, \dots, x_k]\) consisting of those polynomials \(f \in F[x_1, \dots, x_k]\) such that  
	\[
	f(d_1, \dots, d_k) \in D \quad \text{for all } (d_1, \dots, d_k) \in D^k.
	\]  
	This ring is also known as the \emph{ring of numerical polynomials} in \(k\) variables. Moreover, \(P_k \subseteq \LP_k\), and in the special case \(D = \Z\), we have \(P_k = \LP_k\). We point out that this is not always the case, e.g., if \(D=\C\) the function \(z\mapsto \re(z)\), associating to a complex number its real part, is an example of virtual polynomial of class \(2\) that belongs to \(LP_1\) but not to \(P_1\).
\end{remark}

With the same notation as in Remark~\ref{rem:poly1} we have the following result.
\begin{lemma}
	Let \(f \in \LP_i\), and suppose \(x, d \in D^i\) and \(r \in \Q\) are such that \(x + r d \in D^i\). Then  
	\[
	f(x + r d) = \hat{f}_{x,d}(r).
	\]
\end{lemma}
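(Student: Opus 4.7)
The plan is to reduce the claim to a polynomial identity in $F[t]$ obtained by comparing the restrictions of $f$ along two integral lines.

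First, I would observe that the displacement $e := rd$ actually lies in $D^i$: since $e = (x+rd) - x$ and both $x$ and $x+rd$ belong to $D^i$ by hypothesis, so does $e$. Consequently $x + me \in D^i$ for every $m \in \Z$, and the map $\hat{f}_{x,e}$ of Remark~\ref{rem:poly1} is well defined. Let $\ell$ denote the class of $f$. Since both $\Delta(d)^{\ell}(f) = 0$ and $\Delta(e)^{\ell}(f) = 0$, Lemma~\ref{lem:finite_diffs_on_Z} (applied to the functions $m \mapsto f(x+md)$ and $m \mapsto f(x+me)$ on $\Z$) realises $\hat{f}_{x,d}(t)$ and $\hat{f}_{x,e}(t)$ as polynomials in $F[t]$ of degree at most $\ell - 1$.

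The heart of the argument is the identity
\[
\hat{f}_{x,e}(t) = \hat{f}_{x,d}(rt) \qquad \text{in } F[t].
\]
Both sides are polynomials in $t$ of degree at most $\ell - 1$. Writing $r = p/q$ with $p \in \Z$ and $q \in \Z_{>0}$, for every $m \in \Z$ one computes
\[
\hat{f}_{x,e}(qm) = f(x + qm\,e) = f(x + pm\,d) = \hat{f}_{x,d}(pm) = \hat{f}_{x,d}(r \cdot qm),
\]
so the two polynomials agree on the infinite set $q\Z$ and must therefore coincide. Evaluating at $t = 1$ yields
\[
f(x + rd) = f(x + e) = \hat{f}_{x,e}(1) = \hat{f}_{x,d}(r),
\]
which is the desired equality.

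I do not expect any serious technical obstacle; the conceptual step, rather, is to notice that the value $f(x + rd)$ can be repackaged as the integer-parameter evaluation $\hat{f}_{x,e}(1)$ along the \emph{different but still integral} direction $e \in D^i$, after which the above polynomial identity is forced by agreement on infinitely many nodes. If $r = 0$ the statement is trivial, so one may freely assume $p \neq 0$ to ensure that $q\Z$ provides a genuinely infinite set of interpolation points.
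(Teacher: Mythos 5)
Your proof is correct and takes essentially the same route as the paper: both arguments compare the restriction of \(f\) along the integral direction \(rd\) with the rescaled one-variable polynomial \(\hat{f}_{x,d}(r\,\cdot)\), identify the two polynomials over \(F\) because they agree on infinitely many integer points, and then evaluate at a rational argument. The only differences are cosmetic: you make explicit that \(rd\in D^i\) (left implicit in the paper) and prove the single identity \(\hat{f}_{x,rd}(t)=\hat{f}_{x,d}(rt)\) before evaluating at \(t=1\), whereas the paper chains the analogous rescaling identities through \(td'=sd\) and evaluates at \(1/t\).
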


\begin{proof}
	Write \(r = s/t\), with \(s, t \in \Z\), and set \(d' = r d\), so that \(t d' = s d\). Note that \(\hat{f}_{x, t d'} = \hat{f}_{x, s d}\). We have
	\(f(x + t d' m) = \hat{f}_{x, t d'}(m) = \hat{f}_{x, d'}(t m)\), and similarly, \(f(x + s d m) = \hat{f}_{x, s d}(m) = \hat{f}_{x, d}(s m)\).
	Thus,
	\begin{equation*}
	f(x + r d) = f(x + d') = \hat{f}_{x, d'}(1) = \hat{f}_{x, t d'}(1/t) = \hat{f}_{x, s d}(1/t) = \hat{f}_{x, d}(s/t) = \hat{f}_{x,d}(r).\qedhere
	\end{equation*}
\end{proof}
The \(F\)-vector space \(F^i\) can be regarded as a \(\Q\)-vector space.  
Given vectors \(u_1, \dots, u_k \in F^i\), we denote by \(\langle u_1, \dots, u_k \rangle\) the \(\Q\)-vector subspace of \(F^i\) spanned by \(\{u_1, \dots, u_k\}\).
\begin{lemma}
	Let \(u_1, \dots, u_k \in F^i\), and let \(x = r_1 u_1 + \dots + r_k u_k \in \langle u_1, \dots, u_k \rangle \cap D^i\), where \(r_1, \dots, r_k \in \Q\). If \(f \in \LP_i\) is of class \(\ell\), then \(f(x)\) is a polynomial function in the variables \(r_1, \dots, r_k\), with degree bounded by a function of \(\ell\).
\end{lemma}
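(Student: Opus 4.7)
The plan is to exhibit the desired polynomial explicitly by first producing it on the integer lattice $\Z^k$ and then extending to the rationals by a single scaling argument. As a harmless preliminary, by clearing a common denominator we may assume $u_1,\dots,u_k\in D^i$: write $u_j=v_j/N$ with $v_j\in D^i$ and $0\ne N\in D$; then $x=\sum_j (r_j/N)v_j$, and a polynomial of degree $d$ in the variables $r_j/N$ is also a polynomial of degree $d$ in the $r_j$.

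Next I would define $H\colon \Z^k\to F$ by $H(m_1,\dots,m_k)=f(m_1 u_1+\dots+m_k u_k)$, which is well defined since each argument lies in $D^i$. Fixing all but one variable, the previous lemma applied with base point $\sum_{p\ne j} m_p u_p \in D^i$ and direction $u_j\in D^i$ shows that $H$ is a polynomial of degree at most $\ell-1$ in each variable $m_j$ separately. A function $\Z^k \to F$ that is polynomial of bounded degree in each variable separately extends uniquely to a polynomial $\tilde H\in F[t_1,\dots,t_k]$ of degree at most $\ell-1$ in each $t_j$ (by Lagrange interpolating in each variable in turn); in particular its total degree is bounded by $k(\ell-1)$, a function of $\ell$.

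For the extension to the rationals, given $r_1,\dots,r_k\in \Q$ with $x=\sum_j r_j u_j\in D^i$, I choose $N\in \mathbb{N}$ so that $m_j\defeq N r_j\in\Z$ for every $j$, and set $v\defeq Nx=\sum_j m_j u_j\in D^i$. The previous lemma, with base point $0$ and direction $v$, produces a polynomial $\hat f_{0,v}$ of degree at most $\ell-1$ such that $f(sv)=\hat f_{0,v}(s)$ whenever $sv\in D^i$. On integers $s$ we have $f(sv)=H(sm_1,\dots,sm_k)=\tilde H(sm_1,\dots,sm_k)$, so the two polynomials $s\mapsto \hat f_{0,v}(s)$ and $s\mapsto \tilde H(sm_1,\dots,sm_k)$ agree on $\Z$ and hence coincide identically. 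Evaluating at $s=1/N$, which is allowed since $v/N=x\in D^i$, yields $f(x)=\hat f_{0,v}(1/N)=\tilde H(r_1,\dots,r_k)$, exhibiting $f(x)$ as the value of a fixed polynomial of the claimed degree at $(r_1,\dots,r_k)$.

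The main obstacle is the natural temptation to induct on $k$ by fixing $r_1,\dots,r_{k-1}$ and varying $r_k$, which fails because the partial sum $\sum_{j<k} r_j u_j$ need not lie in $D^i$, so the previous lemma cannot be applied directly. The detour through the integer lattice $\Z^k$ (where all integer combinations automatically stay in $D^i$), followed by a single diagonal descent along the rational line through $v=Nx$, is what sidesteps this difficulty.
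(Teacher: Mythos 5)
Your argument is sound exactly when $u_1,\dots,u_k$ already lie in $D^i$ (or in $\tfrac1m D^i$ for a positive \emph{integer} $m$), but the opening reduction to that case is not harmless, and that is where the proof breaks. The common denominator $N$ is an element of $D$, not an integer: for $D=\Z[t]$ and $u_1=1/t$ you must take $N=t$. After replacing $u_j$ by $v_j=Nu_j$, the coordinates of $x$ with respect to $v_1,\dots,v_k$ are $r_j/N\in F$, which in general are no longer rational. Rationality is precisely what the rest of your proof consumes: you need a positive integer multiple of the coordinate vector to land in $\Z^k$, and you need to evaluate $\hat f_{0,v}$ at the rational point $1/N$, which is what the previous lemma permits. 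For coordinates in $F\setminus\Q$ no such step is available, and the conclusion is in fact false along non-rational directions (for $D=\C$ the virtual polynomial $f=\re$ is not polynomial along a complex line), so ``a polynomial in $r_j/N$ is a polynomial in $r_j$'' cannot undo the rescaling: the polynomial identity obtained after rescaling is only established at rational values of the new coordinates, and $r_j/N$ is not one of them. Without the reduction, your lattice function $H$ need not even be defined: with $D=\Z[t]$, $u_1=1/t$, $u_2=1-1/t$, the point $m_1u_1+m_2u_2$ lies in $D$ only when $m_1=m_2$. Note that the ``natural temptation'' you set aside is essentially the paper's actual proof: it inducts on $k$, writes $f(x)=a_0(y)+\dots+a_m(y)r_1^m$, and recovers the coefficients $a_t(y)$ from the higher differences $\Delta^t_{u_1}(f)$, which are again virtual polynomials of smaller class; that is how the problem of partial sums not lying in $D^i$ is absorbed, rather than by passing through the integer lattice.

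A secondary point: even where your argument applies, interpolation from separate degrees only yields total degree at most $k(\ell-1)$, which depends on $k$; the lemma, and especially its use in Corollary~\ref{cor:fd-poly} where uniformity over all finite-dimensional subspaces (hence over all $k$) is essential, requires a bound depending on $\ell$ alone, indeed $\ell-1$. This part is repairable inside your framework: class $\ell$ means \emph{all} mixed differences of order $\ell$ vanish, so (when $u_j\in D^i$) all order-$\ell$ mixed differences of $H$ on $\Z^k$ vanish and $\tilde H$ has total degree at most $\ell-1$. The first issue, however, is structural and leaves the general case $u_j\in F^i$ unproved.
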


\begin{proof}
	Let \(y = r_2 u_2 + \dots + r_k u_k\). By the previous lemma, we have that \(f(x) = \hat{f}_{y, u_1}(r_1)\) is a polynomial function in the variable \(r_1\) of the form
	\[
	\hat{f}_{y,u_1}(r_1) = a_0(y) + a_1(y) r_1 + \dots + a_m(y) r_1^m,
	\]
	where \(m \leq \ell - 1\), and each coefficient \(a_j(y)\) is independent of \(r_1\).
	
	By induction on \(k\), we may assume that \(a_0(y) = f(r_2 u_2 + \dots + r_k u_k) = f(y)\) is a polynomial function in \(r_2, \dots, r_k\), whose degree is bounded by a function of \(\ell\).
	It's easy to see that \(\Delta^t_{u_{1}}(f)(x)=\sum_{j=0}^{m-t}b_j(y)r_1^j\in \LP_i\), where \(b_0(y)=t!\, a_t(y)\). Hence, for \(t=1,\dots, m\), also \(a_t(y)\) is a polynomial function in \(r_2,\dots,r_k\) whose degree is bounded in terms of \(\ell\), as claimed.
\end{proof}
\begin{remark}
	Let \(W\) be a \(\Q\)-vector space, and let \(f \in F^W\).  
	We say that \(f\) is an \emph{fd-polynomial function} if, for every finite-dimensional \(\Q\)-vector subspace \(V = \langle u_1, \dots, u_k \rangle\) of \(W\), the map
	\[
	\bar{f} \colon \Q^k \to F, \quad \bar{f}(r_1, \dots, r_k) := f(r_1 u_1 + \dots + r_k u_k)
	\]
	is a polynomial function in the variables \(r_1, \dots, r_k\).
	
	Furthermore, \(f\) is called a \emph{uniform fd-polynomial function} if there exists a positive integer \(t\) such that \(\deg \bar{f} \le t\) for every such subspace \(V\).  
	In this case, the \emph{total degree} of \(f\) is defined to be the maximum, taken over all such subspaces \(V\), of the degrees of the associated polynomials \(\bar{f}\).
\end{remark}
\begin{corollary}\label{cor:fd-poly}
	Let \(f \in D^{D^i}\). Then \(f \in \LP_i\) and has class at most \(\ell\) if and only if \(f\) is the restriction to \(D^i\) of a uniform fd-polynomial function \(\tilde{f} \colon F^i \to F\) of total degree at most \(\ell - 1\).
\end{corollary}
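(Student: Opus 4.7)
The plan is to handle the two implications of the equivalence separately. For the easier direction, assume $\tilde f \colon F^i \to F$ is a uniform fd-polynomial function of total degree at most $\ell - 1$ restricting to $f$ on $D^i$. Given any $d_1, \ldots, d_\ell \in D^i$ and any $x \in D^i$, the linear $\Q$-subspace $W = \langle x, d_1, \ldots, d_\ell \rangle \subseteq F^i$ contains every point involved in the iterated difference at $x$. On $W$, $\tilde f$ restricts to a polynomial of total degree at most $\ell - 1$, and each difference operator $\Delta(d_j)$ with $d_j \in W$ reduces total degree by at least one. Applying $\ell$ such operators therefore annihilates $\tilde f|_W$ and, evaluated at $x$, yields $(\Delta(d_1) \cdots \Delta(d_\ell))(f)(x) = 0$. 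Hence $f \in \LP_i$ with class at most $\ell$.

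For the converse, the first step is to extend $f \colon D^i \to D$ to a function $\tilde f \colon F^i \to F$. Given $y \in F^i$, a common-denominator argument produces $d \in D \setminus \{0\}$ with $dy \in D^i$; I set
\[
\tilde f(y) \defeq \hat f_{0, dy}(1/d),
\]
with $\hat f_{0, dy}$ as in Remark~\ref{rem:poly1}. Independence of the choice of $d$ follows because, for any two admissible values $d, d'$, the polynomials $\hat f_{0, dy}(s)$ and $\hat f_{0, d'y}(sd/d')$ in the variable $s$ both equal $f(sdy)$ on the infinite set $s \in d'\Z$ and therefore coincide; evaluating at $s = 1/d$ yields $\hat f_{0, dy}(1/d) = \hat f_{0, d'y}(1/d')$. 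Taking $d = 1$ for $y \in D^i$ shows that $\tilde f$ extends $f$.

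Next, I verify that $\tilde f$ is a uniform fd-polynomial function on every finite-dimensional $\Q$-subspace $V = \langle u_1, \ldots, u_k \rangle \subseteq F^i$. Picking a common denominator $d \in D$ so that each $v_j \defeq d u_j$ lies in $D^i$, the preceding lemma applied to $(v_1, \ldots, v_k)$ furnishes a polynomial $P(s_1, \ldots, s_k) \in F[s_1, \ldots, s_k]$ agreeing with $f\bigl(\sum_j s_j v_j\bigr)$ on every $(s_1, \ldots, s_k) \in \Q^k$ with $\sum_j s_j v_j \in D^i$. An argument analogous to the well-definedness step then lifts this equality to all of $V$, producing the polynomial expression $\tilde f\bigl(\sum_j r_j u_j\bigr) = P(r_1/d, \ldots, r_k/d)$.

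The main obstacle is to refine the bound in the preceding lemma from \emph{bounded by a function of $\ell$} to the exact value $\ell - 1$. This is achieved by a sharper induction on $k$. In the expansion $\hat f_{y, u_1}(r_1) = \sum_{j=0}^{\ell - 1} a_j(y) r_1^j$ with $y = r_2 u_2 + \cdots + r_k u_k$, each coefficient $a_j(y)$ can be expressed as an $F$-linear combination of the iterated differences $\Delta_{u_1}^j f(y), \ldots, \Delta_{u_1}^{\ell - 1} f(y)$, all of which lie in $\LP_i$ with class at most $\ell - j$. The refined inductive hypothesis then gives $\deg_{r_2, \ldots, r_k} a_j \le (\ell - j) - 1$, so that each term $a_j(y) r_1^j$ contributes total degree at most $\ell - 1$. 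Summing over $j$ yields the required bound and completes the proof.
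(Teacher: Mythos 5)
The easy direction and your refined degree bound are sound -- indeed the Newton-coefficient induction (expressing each $a_j(y)$ through $\Delta_{u_1}^t f$ of class at most $\ell-t$) is a genuine sharpening of the paper's preceding lemma, which only bounds the degree ``by a function of $\ell$''. The gap is in your construction of the extension $\tilde{f}$. You set $\tilde{f}(y)=\hat{f}_{0,dy}(1/d)$ for any $d\in D\setminus\{0\}$ with $dy\in D^i$, and argue independence of $d$ by claiming that $\hat{f}_{0,dy}(s)$ and $\hat{f}_{0,d'y}(sd/d')$ both equal $f(sdy)$ for $s\in d'\Z$. But $d,d'$ are elements of $D$, not of $\Z$, so the points $s=d'm$ (and $sd/d'=dm$) at which you evaluate these polynomials are in general not rational, and the paper's lemma $f(x+rd)=\hat{f}_{x,d}(r)$ is only available for $r\in\Q$; off $\Q$ the polynomial $\hat{f}_{0,dy}$ need not agree with $f$ at all. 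In fact the recipe itself is not well defined: take $D=\Z[i]$, $f=\re$ (class $2$, as in Remark~\ref{rem:poly1}), and $y=i/2$. With $d=2$ one has $dy=i$, $\hat{f}_{0,i}\equiv 0$, so the recipe gives $0$; with $d'=2i$ one has $d'y=-1$, $\hat{f}_{0,-1}(s)=-s$, so the recipe gives $-1/(2i)=i/2\neq 0$. (The corollary itself is fine here -- the $\Q$-linear extension of $\re$ works -- but it is not produced by your formula, and your well-definedness argument uses a false identity.)

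The same issue propagates into your verification of the fd-polynomial property: after clearing denominators with $d\in D$, the vectors $v_j=du_j$ span a \emph{different} $\Q$-subspace than $\langle u_1,\dots,u_k\rangle$ whenever $d\notin\Q$, and the final identity $\tilde{f}\bigl(\sum_j r_ju_j\bigr)=P(r_1/d,\dots,r_k/d)$ again evaluates a polynomial at non-rational arguments, which the preceding lemma does not control; the ``argument analogous to the well-definedness step'' therefore does not lift the equality to all of $V$. Restricting to integer denominators would repair the consistency argument but then $\tilde{f}$ is not defined on all of $F^i$ (e.g.\ $1/x\in\Q(x)$ has no integer multiple in $\Z[x]$). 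So the hard direction needs a different construction of the extension -- some coherent choice on subspaces not generated by points of $D^i$ (this is the step the paper compresses into the phrase ``natural extension'', relying only on the behaviour of $f$ on $\langle u_1,\dots,u_k\rangle\cap D^i$) -- before your degree-refinement argument can be applied.
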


\begin{proof}
	If \(f\) has class at most \(\ell\), then by the previous lemmas its natural extension \(\tilde{f}\) to \(F^i\) restricts on every finite-dimensional \(\Q\)-vector subspace to a polynomial function of degree at most \(\ell - 1\). This shows \(\tilde{f}\) is a uniform fd-polynomial function of total degree at most \(\ell - 1\).
	
	Conversely, if such a uniform fd-polynomial extension \(\tilde{f}\) exists, then the finite differences of order \(\ell\) of \(f\) vanish on \(D^i\). Hence, \(f \in \LP_i\) and has class at most \(\ell\).
\end{proof}

Let \(f \in \LP_{i-1}\). There exist non-negative integers \(k_1, \dots, k_{i-1}\) such that the ordinal number \(\omega^{i-2}k_{i-1} + \dots + \omega k_2 + k_1\) is minimal, and
\[
\left(\prod_{j=1}^{i-1}  \Delta_j(c_{j,1}) \cdots \Delta_j(c_{j,k_j}) \right)(f) = 0
\quad \text{for all } c_{j,1}, \dots, c_{j,k_j} \in D.
\]
We then let
\begin{align}
	\tdeg(f) &= \omega^{i-2}k_{i-1} + \dots + \omega k_2 + k_1, \\
	\tdeg(f\Delta_i) &= \sum_{j=1}^{n-i} \omega^{n-j} + \tdeg(f).
\end{align} 
Whenever \(f \in P_{i-1}\), this notion of \(\tdeg\) agrees with that given in Definition~\ref{def:tdeg}.

\begin{remark}\label{rem:tdeg2}
	Let \(f \in \LP_{t-1}\) with \(\tdeg(f) = \omega^{t-2}k_{t-1} + \dots + \omega k_2 + k_1\) and assume \(k_j \ne 0\). For any choice of functions \(g_i \in \LP_{j-1}\), with \(i = 1, \dots, s\le k_j\), and \(j = 1, \dots, t - 1\), by Corollary~\ref{cor:fd-poly}, the function
	\[
	F = \left(\Delta_j(g_1) \cdots \Delta_j(g_s)\right)(f)
	\]
	lies in \(\LP_{t - 1}\), with transfinite degree 
	\[
	\tdeg(F) = \omega^{t-2}h_{t-1} + \dots + \omega h_2 + h_1,
	\]
	for some integers \(h_i\) such that \(h_i = k_i\) for \(i = j+1, \dots, t - 1\), and \(h_j < k_j\). In particular, if \(s = k_j\), then \(h_j = 0\) and \(F\) does not depend on \(x_j\).
\end{remark}

\begin{definition}
	Let \(\alpha\) be a countable ordinal. We define
	\[
	\zeta^n_\alpha = \left\{ f\Delta_n \,\middle|\, f \in \LP_{n-1} \text{ and } \tdeg(f\Delta_n) < \alpha \right\}.
	\]
\end{definition}

Note that if \(\alpha\) is a limit ordinal, then \(\zeta^n_\alpha = \bigcup_{\beta < \alpha} \zeta^n_\beta\).
\begin{lemma}\label{lemmacentri}
	Let \(p \in \LP_{\ell-1}\) and \(f\Delta_n \in \zeta^n_\alpha\). Then there exists an ordinal \(\beta < \alpha\) such that
	\(
	[f\Delta_n, p\Delta_\ell] \in \zeta^n_\beta.
	\)
\end{lemma}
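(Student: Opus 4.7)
The plan is to use the commutator identity~\eqref{parallelism} to express $[f\Delta_n, p\Delta_\ell]$ as an element of the base subgroup $B_n$, and then to apply Remark~\ref{rem:tdeg2} to obtain a strict decrease in transfinite degree.

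If $\ell = n$ the commutator vanishes because $B_n$ is abelian, and (noting that $f\Delta_n \in \zeta^n_\alpha$ forces $\alpha \ge 1$) any $\beta$ with $1 \le \beta < \alpha$ suffices. For $\ell < n$, formula~\eqref{parallelism} yields
\[
[f\Delta_n, p\Delta_\ell] = \bigl(\Delta_\ell(p) f\bigr)\Delta_n.
\]
I would first check that $\Delta_\ell(p)f \in \LP_{n-1}$. By Corollary~\ref{cor:fd-poly}, $f$ is the restriction to $D^{n-1}$ of a uniform fd-polynomial function, and precomposing with the $\LP_{\ell-1}$-valued shift $x\mapsto x + p(x_1,\ldots,x_{\ell-1})\,e_\ell$ preserves this property. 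Hence $\Delta_\ell(p)f$ is again a virtual polynomial, whose class is controlled by the class of $f$.

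The crux is the estimate $\tdeg(\Delta_\ell(p)f) < \tdeg(f)$. Writing $\tdeg(f) = \omega^{n-2}k_{n-1} + \cdots + \omega k_2 + k_1$, I would split into the cases $k_\ell \neq 0$ and $k_\ell = 0$. When $k_\ell \ne 0$, Remark~\ref{rem:tdeg2} applied with $j = \ell$, $s = 1$, and $g_1 = p$ gives
\[
\tdeg(\Delta_\ell(p)f) = \omega^{n-2}h_{n-1} + \cdots + h_1,
\]
with $h_i = k_i$ for $i > \ell$ and $h_\ell < k_\ell$; thus $\tdeg(\Delta_\ell(p)f) < \tdeg(f)$. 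When $k_\ell = 0$, the minimality built into the definition of $\tdeg$ forces $f$ to be independent of $x_\ell$, and consequently $\Delta_\ell(p)f = 0$. In either case, setting $\beta := \tdeg(f\Delta_n)$ produces an ordinal strictly less than $\alpha$ such that $[f\Delta_n, p\Delta_\ell] \in \zeta^n_\beta$.

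The main obstacle I anticipate is the $k_\ell = 0$ branch, which Remark~\ref{rem:tdeg2} explicitly excludes. Justifying that $\Delta_\ell(p)f = 0$ there requires returning to Definition~\ref{def:virtual_poly} and unpacking the minimality of $(k_1,\ldots,k_{n-1})$: any genuine $x_\ell$-dependence of $f$ would, via the commutativity of the $\Delta_j$-operators, yield a witness tuple with strictly smaller ordinal than $\tdeg(f)$, contradicting minimality.
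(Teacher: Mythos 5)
Your argument is essentially the paper's own proof: both express the commutator via Equation~\eqref{parallelism} as \((\Delta_\ell(p)f)\Delta_n\), note it lies in \(\LP_{n-1}\), and invoke Remark~\ref{rem:tdeg2} to get the strict drop \(h_\ell<k_\ell\) in transfinite degree, taking \(\beta=\tdeg(f\Delta_n)<\alpha\). The \(k_\ell=0\) (and \(\ell=n\)) branch you single out is exactly what the paper disposes of with ``either vanishes or\dots'', and your justification that \(k_\ell=0\) forces independence from \(x_\ell\) is consistent with the final sentence of Remark~\ref{rem:tdeg2}, so there is no substantive divergence.
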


\begin{proof}
	Let \(\tdeg(f) = \omega^{n-2}k_{n-1} + \dots + k_1\). Then the commutator \([f\Delta_n, p\Delta_\ell]\) lies in \(\LP_{n-1}\) and either vanishes or by Remark~\ref{rem:tdeg2} it has transfinite degree strictly less than \(\tdeg(f\Delta_n)\). Indeed
	\[
	\tdeg\left( [f\Delta_n, p\Delta_\ell] \right) = \tdeg\left( \Delta_\ell(p)(f)\Delta_n \right) = \omega^{n-2}k_{n-1} + \dots + \omega^\ell k_{\ell+1} + \omega^{\ell-1}h_\ell + \dots + h_1,
	\]
	for some integers \(h_i\) with \(h_\ell < k_\ell\).
\end{proof}

\begin{remark}\label{rem:center}
	Since \(D^i\) acts faithfully on \(R_i\) by translations, a standard argument shows that the center of \(W_n\) is given by
	\[
	Z(W_n) = \{ c\Delta_n \mid c \in D \}.
	\]
\end{remark}

\begin{lemma}
	If \(\alpha < \omega^{n-1}\) is an ordinal, then \(Z_\alpha(W_n)\cap B_n \le \zeta_\alpha\).
\end{lemma}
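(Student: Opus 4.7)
The plan is to proceed by transfinite induction on $\alpha<\omega^{n-1}$. The base case $\alpha=1$ is Remark~\ref{rem:center}: $Z_1(W_n)=Z(W_n)=\{c\Delta_n:c\in D\}$ and each such constant has $\tdeg=0<1$, so lies in $\zeta^n_1$. For a limit ordinal $\alpha$, both $Z_\alpha(W_n)$ and $\zeta^n_\alpha$ are unions over $\beta<\alpha$ of the corresponding objects, so the inclusion follows at once from the inductive hypothesis.

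For the successor case $\alpha=\beta+1$, pick $f\Delta_n\in Z_\alpha(W_n)\cap B_n$. The definition of $Z_\alpha$ together with the commutator formula~\eqref{parallelism} yields, for every $1\le u\le n-1$ and every $p\in R_{u-1}$,
\[
[f\Delta_n,\,p\Delta_u]=\Delta_u(p)(f)\Delta_n \in Z_\beta(W_n)\cap B_n \le \zeta^n_\beta,
\]
where the last containment is the inductive hypothesis. Hence $\Delta_u(p)(f)\in \LP_{n-1}$ and $\tdeg(\Delta_u(p)(f))<\beta$ for every such $u$ and $p$.

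To deduce $f\in \LP_{n-1}$, I would first specialize $p=c\in D$ and use the decomposition $\Delta(d)(f)=\sum_j\Delta_j(c_j)(f_{d_j})$ from~\eqref{eq:increment} together with the translation invariance of $\LP_{n-1}$ (via the identity $\Delta_j(c_j)(f_{d_j})=[\Delta_j(c_j)(f)]_{d_j}$), obtaining $\Delta(d)(f)\in \LP_{n-1}$ with $\tdeg<\beta$ for every $d\in D^{n-1}$. By Remark~\ref{rem:tdeg2}, each further application of $\Delta(d)$ strictly lowers the transfinite degree, so after finitely many iterations one reaches zero, which exhibits $f$ itself as a virtual polynomial.

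Finally, to verify $\tdeg(f\Delta_n)<\alpha$, equivalently $\tdeg(f)\le \beta$, I would write $\beta=\omega^{n-2}b_{n-1}+\cdots+b_1$ and $\tdeg(f)=\omega^{n-2}k_{n-1}+\cdots+k_1$. Remark~\ref{rem:tdeg2} ensures that $\tdeg(\Delta_u(c)(f))$ retains $k_j$ for $j>u$ and strictly decreases $k_u$; processing $u$ from $n-1$ down to $1$ and comparing lexicographically against $(b_{n-1},\ldots,b_1)$ then forces $\tdeg(f)\le\beta$. I expect the main technical obstacle to be this last lexicographic comparison, together with the uniformity of the class bound needed in the previous paragraph, since the coefficients of $\tdeg(\Delta_u(c)(f))$ \emph{below} index $u$ are not determined by those of $f$ alone. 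Both issues are resolved by also testing commutators with non-constant $p\in R_{u-1}$---for instance $p$ a monomial in the variables $x_1,\ldots,x_{u-1}$---and reading off the extra constraints via the Taylor expansion~\eqref{eqTaylor}.
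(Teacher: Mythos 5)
Your proposal follows the same route as the paper's proof: transfinite induction on \(\alpha\), with the base case given by Remark~\ref{rem:center}, the limit case by taking unions, and the successor step handled by commuting \(f\Delta_n\) against elements of the lower base groups and invoking the inductive hypothesis to place the resulting commutators in \(\zeta^n_\beta\). The paper's own proof is much terser---it tests only against constants \(c\Delta_\ell\) and directly concludes \(\tdeg(f\Delta_n)<\beta+1\le\alpha\)---so the two points you single out as technical obstacles (establishing \(f\in\LP_{n-1}\) via the increment identity~\eqref{eq:increment}, exactly as in the forward direction of the proof of Theorem~\ref{thm:hyper}, and the lexicographic comparison of transfinite degrees) are details the paper leaves implicit rather than genuine divergences from its argument.
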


\begin{proof}
	We argue induction on \(\alpha\), the case \(\alpha=1\) being obvious by the previous remark. Let \(f\Delta_n\in Z_\alpha(W_n)\cap B_n\) and \(c\in D\). The commutator \([f\Delta_n, c\Delta_\ell]\in Z_\beta(W_n)\cap B_n\) for some ordinal \(\beta < \alpha\). By induction \([f\Delta_n, c\Delta_\ell]\in \zeta_{\beta}\) for every choice of \(\ell \le n\), i.e., \(\tdeg(\Delta_\ell(c)(f))< \beta\) and hence \(\tdeg(f\Delta_n)< \beta +1 \le  \alpha\) as required.
\end{proof}

A direct consequence of the previous lemmas  is the following.

\begin{corollary}\label{cor:centri}
	Let \(\alpha \le \omega^{n-1}\) be an ordinal. If for every \(i = 1, \dots, n-1\), the ring \(R_i\) is contained in \(\LP_i\), then:
	\begin{enumerate}
		\item \([\zeta^n_{\alpha+1}, W_n] \subseteq \zeta^n_\alpha\) if \(\alpha\) is not a limit ordinal;
		\item \([\zeta^n_\alpha, W_n] \subseteq \zeta^n_\alpha\) if \(\alpha\) is a limit ordinal.
	\end{enumerate}
	In particular, \(\zeta^n_\alpha = Z_\alpha(W_n) \cap B_n\).
\end{corollary}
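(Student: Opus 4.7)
The plan is to derive (1) and (2) from Lemma~\ref{lemmacentri} by first establishing that $\zeta^n_\alpha$ is stable under the conjugation action of $W_n$, and then reducing the commutator $[f\Delta_n,w]$ for a general $w\in W_n$ to commutators with single generators of $W_{n-1}$. The ``in particular'' identity will then follow by transfinite induction on $\alpha$, combining (1) and (2) with the previous lemma, which supplies the inclusion $Z_\alpha(W_n)\cap B_n\subseteq\zeta^n_\alpha$.

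For $W_n$-invariance of $\zeta^n_\alpha$, I observe that $B_n$ is abelian, so only the $W_{n-1}$-projection of a conjugating element acts non-trivially on $B_n$. On a single generator $p\Delta_\ell$ with $\ell<n$ and $p\in R_{\ell-1}\subseteq\LP_{\ell-1}$, the conjugate $(h\Delta_n)^{p\Delta_\ell}$ corresponds to shifting the $\ell$-th argument of $h$ by $p$; writing the result as $h+\Delta_\ell(p)(h)$ and applying Remark~\ref{rem:tdeg2}, the second summand has strictly smaller transfinite degree than $h$, so $\tdeg$ of the full conjugate is bounded by $\tdeg(h)$. Iterating over a generating product in $W_{n-1}$ yields $W_{n-1}$-invariance, hence $W_n$-invariance, of $\zeta^n_\alpha$.

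Next, for (1) and (2), take $f\Delta_n\in\zeta^n_{\alpha+1}$ (resp.\ $\zeta^n_\alpha$ for limit $\alpha$) and expand $w=\prod_k p_k\Delta_{\ell_k}$ as a product of generators. Iterating the identity $[a,bc]=[a,c][a,b]^c$ rewrites $[f\Delta_n,w]$ as a product, in the abelian group $B_n$, of $W_n$-conjugates of single-generator commutators $[f\Delta_n,p_k\Delta_{\ell_k}]$ (the generators with $\ell_k=n$ commute with $f\Delta_n$). By Lemma~\ref{lemmacentri} each such commutator lies in $\zeta^n_\beta$ for some $\beta<\alpha+1$ in the non-limit case, and some $\beta<\alpha$ in the limit case; in either case $\zeta^n_\beta\subseteq\zeta^n_\alpha$. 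The $W_n$-invariance established in the previous step preserves this containment under the conjugations, and the additive closure of $\zeta^n_\alpha$ gives $[f\Delta_n,w]\in\zeta^n_\alpha$.

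For the ``in particular'' claim I proceed by transfinite induction on $\alpha\le\omega^{n-1}$. In the successor case, for $f\Delta_n\in\zeta^n_{\beta+1}$, part (1) and the inductive hypothesis give $[f\Delta_n,W_n]\subseteq\zeta^n_\beta\subseteq Z_\beta(W_n)$, so $f\Delta_n\in Z_{\beta+1}(W_n)\cap B_n$; the limit case is a direct union argument using (2). The reverse inclusion $Z_\alpha(W_n)\cap B_n\subseteq\zeta^n_\alpha$ is the content of the preceding lemma for $\alpha<\omega^{n-1}$, and extends to $\alpha=\omega^{n-1}$ by unions. The main technical subtlety throughout is the control of $\tdeg$ under the polynomial-like coordinate changes induced by elements of $W_{n-1}$; this is exactly what Remark~\ref{rem:tdeg2} furnishes, and is what makes the hypothesis $R_i\subseteq\LP_i$ indispensable.
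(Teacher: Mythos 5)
Your proposal is correct and is essentially the argument the paper intends: the paper states this corollary without proof as ``a direct consequence of the previous lemmas,'' and your write-up supplies exactly the routine verification behind that claim — decomposition of a general element of \(W_n\) into base-group generators via \([a,bc]=[a,c][a,b]^c\), control of conjugation on \(B_n\) through \((h\Delta_n)^{p\Delta_\ell}=(h+\Delta_\ell(p)h)\Delta_n\) and Remark~\ref{rem:tdeg2}, Lemma~\ref{lemmacentri} for single-generator commutators, and a transfinite induction combined with the preceding lemma (extended to \(\alpha=\omega^{n-1}\) by unions) for the identity \(\zeta^n_\alpha=Z_\alpha(W_n)\cap B_n\). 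The only cosmetic point is that your induction uses \([\zeta^n_{\beta+1},W_n]\subseteq\zeta^n_\beta\) also when \(\beta\) is a limit ordinal, which item (1) as stated does not literally cover, but your own argument for (1) proves this uniformly for every \(\beta\), so nothing is lost.
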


\begin{theorem}\label{thm:hyper}
	 The group \(W_n\) is transfinite hypercentral if and only if \(n=1\) or  \(R_i \subseteq \LP_i\) for all \(i = 1, \dots, n - 1\).
\end{theorem}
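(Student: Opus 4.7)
The plan is to prove the two directions separately, handling the converse contrapositively.

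\emph{Sufficiency.} Assume $R_i\subseteq\LP_i$ for every $i\in\{1,\dots,n-1\}$ and proceed by induction on $n$. The base $n=1$ is immediate since $W_1=D$ is abelian, so $W_1=Z_1(W_1)$. For $n\ge 2$, Corollary~\ref{cor:centri} applied at $\alpha=\omega^{n-1}$ gives $Z_{\omega^{n-1}}(W_n)\cap B_n=\zeta_{\omega^{n-1}}^{n}$; since every $f\in R_{n-1}\subseteq\LP_{n-1}$ satisfies $\tdeg(f\Delta_n)<\omega^{n-1}$, this intersection is all of $B_n$, so $B_n\subseteq Z_{\omega^{n-1}}(W_n)$. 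The quotient $W_n/B_n$ is canonically isomorphic to $W_{n-1}$, which by the inductive hypothesis is transfinite hypercentral of class $\omega^{n-2}+\dots+1$. Combining this with the standard fact that if $N\trianglelefteq G$ is contained in $Z_\beta(G)$ and $G/N=Z_\gamma(G/N)$ then $G=Z_{\beta+\gamma}(G)$ (an easy transfinite induction on $\gamma$), one obtains $W_n=Z_{\omega^{n-1}+\omega^{n-2}+\dots+1}(W_n)$.

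\emph{Necessity.} Suppose some $R_{i_0}$ is not contained in $\LP_{i_0}$; the goal is to show $W_n$ is not transfinite hypercentral. Transfinite hypercentrality passes to quotients — the natural projection $G\to G/N$ maps $Z_\alpha(G)$ into $Z_\alpha(G/N)$, as a quick transfinite induction shows — so by projecting onto $W_{i_0+1}$ I may assume $n=i_0+1$ and $R_{n-1}\not\subseteq\LP_{n-1}$. The key claim is
\[
Z_\alpha(W_n)\cap B_n\subseteq\LP_{n-1}\cdot\Delta_n\quad\text{for every ordinal }\alpha.
\]
Once this is established, picking $f\in R_{n-1}\setminus\LP_{n-1}$ produces an element $f\Delta_n\in B_n$ lying outside every $Z_\alpha(W_n)$, which forbids hypercentrality.

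I would prove the claim by transfinite induction on $\alpha$, effectively extending the lemma preceding Corollary~\ref{cor:centri} beyond $\omega^{n-1}$. The base and limit cases are routine. In the successor case $\alpha=\alpha'+1$, the commutator identity~\eqref{parallelism} yields $\Delta_\ell(c)g\cdot\Delta_n=[g\Delta_n,c\Delta_\ell]\in Z_{\alpha'}\cap B_n$ for every $c\in D$ and $\ell<n$, and the inductive hypothesis then gives $\Delta_\ell(c)g\in\LP_{n-1}$. Lifting this to $g\in\LP_{n-1}$ uses the decomposition $\Delta(d)g=\sum_\ell\Delta_\ell(c_\ell)(g_{d_\ell'})$ from Equation~\eqref{eq:increment} together with translation-invariance of the class: each summand is a translate of a class-bounded element, and a finite sum of class-bounded functions is class-bounded, giving a single bound on $\Delta(d)g$ that propagates to $g$.

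The main obstacle is extracting a \emph{uniform} class bound on $\Delta_\ell(c)g$ as $c$ ranges over $D$ and $\ell$ over $\{1,\dots,n-1\}$; without such uniformity one only obtains pointwise vanishing of iterated differences and not the single integer $k$ required by Definition~\ref{def:virtual_poly}. I expect to resolve this by strengthening the inductive invariant to carry quantitative control on $\tdeg$ (as is implicit in the proof of the preceding lemma) and exploiting that $Z_{\alpha'}$ is a subgroup, so the membership $\Delta_\ell(c)g\cdot\Delta_n\in Z_{\alpha'}\cap B_n$ is uniform in $(c,\ell)$. This is the delicate point that distinguishes the iff from the one-sided statement of Corollary~\ref{cor:centri}; once it is secured, the successor step closes and the theorem follows.
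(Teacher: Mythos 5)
Your sufficiency half is correct and coincides with the paper's argument: Corollary~\ref{cor:centri} gives \(B_n\le Z_{\omega^{n-1}}(W_n)\), and induction on \(n\) together with the standard facts about upper central series of quotients finishes that direction. Your necessity half also follows the paper's route: the paper does not pass to the quotient \(W_{i_0+1}\) but works directly with an arbitrary \(f\Delta_\ell\in Z_\alpha(W_n)\), argues by transfinite induction on \(\alpha\), takes commutators with the constant elements \(c\Delta_i\), and then combines Equation~\eqref{eq:increment} with translation invariance --- exactly your plan, up to the harmless reduction to the top base group. The genuine gap is that you stop at the step that carries all the weight: passing from ``for each pair \((c,\ell)\) the function \(\Delta_\ell(c)g\) lies in \(\LP_{n-1}\) with some class bound \(m_{c,\ell}\)'' to ``there is a single \(m\) valid for all \((c,\ell)\)'', which is what Definition~\ref{def:virtual_poly} and the expansion~\eqref{eq:increment} require in order to conclude \(g\in\LP_{n-1}\). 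You explicitly say you ``expect to resolve'' this, i.e.\ you do not prove it, so the proposal is incomplete precisely where the hard implication differs from the easy one.

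Moreover, the fix you sketch does not work as stated. Knowing that all the elements \(\Delta_\ell(c)g\,\Delta_n\) lie in one and the same subgroup \(Z_{\alpha'}\) yields no uniform class bound, because the class of an element of \(Z_{\alpha'}\cap B_n\) is not a function of \(\alpha'\): already in the polynomial model \(x_1^k\Delta_n\in Z_{\omega}(W_n)\) for every \(k\), with classes \(k+1\) unbounded, so no invariant of the form ``class bounded by a function of the ordinal'' can be carried through the induction, and at limit ordinals, where \(Z_\lambda=\bigcup_{\beta<\lambda}Z_\beta\), the family \(\{\Delta_\ell(c)g\}_{c,\ell}\) may be spread over cofinally many \(Z_\beta\) with no common bound. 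For comparison, at this very point the paper's proof invokes its inductive hypothesis to produce one \(m\) uniform in \(c_i\) and then applies~\eqref{eq:increment} and translation invariance; in effect you reconstructed the paper's argument but singled out as ``delicate'' the uniformity that the paper treats as immediate. Identifying the obstacle is valuable, but without an actual argument supplying the uniform \(m\), your successor step does not close and the necessity direction remains unproved.
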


\begin{proof}
	We may assume \(n\ge 2\), the result being trivial for \(n=1\).
	Assume first that \(W_n\) is transfinite hypercentral. Let \(f\Delta_\ell \in Z_\alpha(W_n)\) with \(f \in R_{\ell-1}\). We proceed by transfinite induction on \(\alpha\), with the goal of showing that \(f \in \LP_{\ell-1}\).
	
	If \(\alpha = 1\), then by  Remark~\ref{rem:center} we must have \(\ell = n\) and \(f = c\) for some \(c \in D\). Thus, \(\Delta(d)(f) = 0\) for all \(d \in D^{n-1}\).
	
	Now assume \(\alpha\) is a countable non-limit ordinal. For every \(i = 1, \dots, \ell - 1\) and every \(c \in D\), we have
	\[
	[f\Delta_\ell, c\Delta_i] = \Delta_i(c)(f)\Delta_\ell \in Z_{\alpha - 1}(W_n).
	\]
	By the inductive hypothesis, there exists \(m \in \mathbb{N}\) such that
	\[
	(\Delta(d_1) \cdots \Delta(d_m) \Delta_i(c_i))(f) = 0
	\]
	for all \(d_1, \dots, d_m \in D^{\ell - 1}\) and all \(c_i \in D\).
	
	Let \(d_{m+1} = \sum_{i=1}^{\ell - 1} c_i e_i\). By Equation~\eqref{eq:increment} and the translation invariance of \(\LP_{\ell - 1}\), it follows that
	\[
	(\Delta(d_1) \cdots \Delta(d_m) \Delta(d_{m+1}))(f) = 0
	\]
	for all \(d_1, \dots, d_{m+1} \in D^{\ell - 1}\), showing that \(f \in \LP_{\ell - 1}\).
	
	If \(\alpha\) is a limit ordinal, observe that \(f\Delta_\ell \in \bigcup_{\beta < \alpha} Z_\beta(W_n)\), so there exists a non-limit ordinal \(\beta < \alpha\) such that \(f\Delta_\ell \in Z_\beta(W_n)\), and the previous argument applies.
	
	Conversely, suppose \(R_i \subseteq \LP_i\) for all \(i = 1, \dots, n - 1\). We proceed by induction on \(n\). The case \(n = 1\) is trivial. Assume the result holds for \(n - 1\). By Corollary~\ref{cor:centri}, we have \(Z_{\omega^{n - 1}}(W_n) \supseteq B_n\), so that the quotient \(W_n / Z_{\omega^{n - 1}}\) is a quotient of \(W_{n - 1} = W_n / B_n\), which is transfinite hypercentral by the inductive hypothesis. Hence, \(W_n\) is also transfinite hypercentral.
\end{proof}
\begin{theorem}\label{thm:hypercentralpolynomials}
  If \(R_i\le \LP_i\) and \(R_i\otimes_D F\supseteq F[x_1,\dots,x_i]\)   for all   \(i=1,\dots,   n-1\),   then
  \begin{equation}
  	W_n=Z_{\omega^{n-1}+\dots +\omega+1}(W_n).
  	\end{equation}
\end{theorem}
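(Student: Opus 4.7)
The plan is to proceed by induction on $n$. The base case $n=1$ is immediate: $W_1=D$ is abelian, so $W_1=Z_1(W_1)=Z_{\omega^0}(W_1)$. For $n\ge 2$, I would first establish the containment $B_n\subseteq Z_{\omega^{n-1}}(W_n)$ by invoking Corollary~\ref{cor:centri}, which gives $Z_\alpha(W_n)\cap B_n=\zeta^n_\alpha$ for every $\alpha\le\omega^{n-1}$. Since $R_{n-1}\subseteq \LP_{n-1}$, any $f\in R_{n-1}$ has transfinite degree of the form $\omega^{n-2}k_{n-1}+\dots+\omega k_2+k_1$ with $k_i\in\mathbb{N}$, hence strictly less than $\omega^{n-1}$; as $\tdeg(f\Delta_n)=\tdeg(f)$ (the sum $\sum_{i=1}^0\omega^{n-i}$ being empty), every element of $B_n$ lies in $\zeta^n_{\omega^{n-1}}$, yielding $B_n\subseteq Z_{\omega^{n-1}}(W_n)$.

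Next, I would pass to the quotient $W_n/B_n\cong W_{n-1}$. The theorem's hypotheses, restricted to indices $i=1,\dots,n-2$, are exactly those needed to apply the inductive hypothesis, so $W_{n-1}=Z_\beta(W_{n-1})$ with $\beta=\omega^{n-2}+\dots+\omega+1$. Letting $\pi\colon W_n\to W_n/B_n$ denote the projection, a routine transfinite induction on $\gamma$ establishes that $Z_{\omega^{n-1}+\gamma}(W_n)\supseteq\pi^{-1}(Z_\gamma(W_n/B_n))$: the case $\gamma=0$ is the previous step; for a successor $\gamma+1$, if $gB_n\in Z_{\gamma+1}(W_n/B_n)$ then for every $h\in W_n$ one has $[g,h]B_n\in Z_\gamma(W_n/B_n)$, so $[g,h]\in\pi^{-1}(Z_\gamma(W_n/B_n))\subseteq Z_{\omega^{n-1}+\gamma}(W_n)$ by induction, whence $g\in Z_{\omega^{n-1}+\gamma+1}(W_n)$; the limit case follows by taking unions. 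Setting $\gamma=\beta$ gives $Z_{\omega^{n-1}+\beta}(W_n)\supseteq \pi^{-1}(W_n/B_n)=W_n$, which is exactly the claim.

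The main obstacle lies in the first step: the containment $B_n\subseteq Z_{\omega^{n-1}}(W_n)$ rests on the detailed analysis of transfinite degrees within $\LP_{n-1}$ encoded by Corollary~\ref{cor:centri}. Once this is in place, the rest is a standard quotient-lifting argument for the upper central series. The hypothesis $R_i\otimes_D F\supseteq F[x_1,\dots,x_i]$ does not intervene in the upper-bound chase above, but it should ensure sharpness: every ordinal $\alpha<\omega^{n-1}+\dots+\omega+1$ is realised as the transfinite degree of some element of $W_n$, so no strictly smaller ordinal would suffice.
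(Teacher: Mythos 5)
Your proposal is correct and follows essentially the same route as the paper: both place $B_n$ inside $Z_{\omega^{n-1}}(W_n)$ (you via Corollary~\ref{cor:centri}, the paper via Corollary~\ref{corcor}) and then induct on $n$ through the quotient $W_n/B_n\cong W_{n-1}$, your version merely spelling out the standard lifting of the upper central series along the projection. Your closing observation is also accurate: the hypothesis $R_i\otimes_D F\supseteq F[x_1,\dots,x_i]$ is not needed for the stated containment, only to guarantee that $\omega^{n-1}+\dots+\omega+1$ is the exact hypercentral class.
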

We split the proof into several Lemmas.

We remind the reader that the symbol \(\Delta_i\) is used to denote both the element \(1\Delta_i\) of the \(i\)-th base subgroup and  the difference operator with respect to the \(i\)-th variable. It will be clear from the context  which one we are using.  

From now on, we assume that \(R_i \subseteq \LP_i\) and that
\(
R_i \otimes_D F \supseteq F[x_1, \dots, x_{n-1}]
\)
for all \(i\). This is equivalent to requiring that for every monomial \(x^\Lambda \in F[x_1, \dots, x_{n-1}]\) there exists a non-zero element \(d_\Lambda\in D\) such that \(d_\Lambda x^\Lambda\in R_i\).

\begin{lemma}\label{lem:ZaphainBn}
	Let \(\alpha< \omega^{n-1}\) be an ordinal. There exist \(f \in \LP_{n-1}\) with \(\tdeg(f\Delta_n) = \alpha\) such that  for every ordinal \(\beta\) such that \(\beta + 1 < \alpha\), there exists an element \(h_\beta \in W_n\) such that
	\[
	\tdeg([f\Delta_n, h_\beta]) > \beta.
	\]
	In particular \(\zeta^n_{\alpha+1}\supsetneq \zeta^n_{\alpha}\).
\end{lemma}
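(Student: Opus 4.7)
My plan is to take \(f\) to be a scaled monomial whose transfinite degree matches the Cantor normal form of \(\alpha\). Since \(\alpha<\omega^{n-1}\), I would write \(\alpha=\omega^{n-2}a_{n-1}+\dots+\omega a_2+a_1\) uniquely, set \(\Lambda=(a_1,\dots,a_{n-1},0,\dots)\in\mathcal{P}(n-1)\), and use the standing hypothesis \(R_{n-1}\otimes_D F\supseteq F[x_1,\dots,x_{n-1}]\) to pick a non-zero \(d_\Lambda\in D\) with \(f:=d_\Lambda x^\Lambda\in R_{n-1}\subseteq\LP_{n-1}\). By construction \(\tdeg(f\Delta_n)=\alpha\), which immediately delivers the ``in particular'' clause, since \(f\Delta_n\in\zeta^n_{\alpha+1}\setminus\zeta^n_\alpha\).

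For the main assertion I would fix \(\beta\) with \(\beta+1<\alpha\), write \(\beta=\omega^{n-2}b_{n-1}+\dots+b_1\), and let \(\ell\) be the largest index at which \(a_\ell\ne b_\ell\); since \(\beta<\alpha\) this forces \(a_\ell>b_\ell\) and \(a_i=b_i\) for \(i>\ell\). The key computation, immediate from Lemma~\ref{lem:maxmon}, is that for any monomial element \(d'x^\Theta\Delta_\ell\in W_n\) the commutator \([f\Delta_n,d'x^\Theta\Delta_\ell]\) has leading term
\begin{equation*}
d_\Lambda d' a_\ell\, x_1^{a_1+\theta_1}\cdots x_{\ell-1}^{a_{\ell-1}+\theta_{\ell-1}}x_\ell^{a_\ell-1}x_{\ell+1}^{a_{\ell+1}}\cdots x_{n-1}^{a_{n-1}}\Delta_n,
\end{equation*}
whose coefficient is non-zero by characteristic \(0\) and whose \(\tdeg\) is obtained from \(\alpha\) by lowering the coefficient of \(\omega^{\ell-1}\) by one and adding \(\theta_i\) to the coefficient of \(\omega^{i-1}\) for every \(i<\ell\).

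I would then split into two cases. If \(a_\ell\ge b_\ell+2\), take \(h_\beta:=c\Delta_\ell\) for any non-zero \(c\in D\subseteq R_{\ell-1}\): the commutator's transfinite degree matches \(\alpha\) at every position \(>\ell-1\) and has \(\omega^{\ell-1}\)-coefficient \(a_\ell-1>b_\ell\), so it already exceeds \(\beta\) in the Cantor normal form ordering. If instead \(a_\ell=b_\ell+1\), the assumption \(\beta+1<\alpha\) forces \(\ell\ge 2\) (for otherwise \(\beta=\alpha-1\), contradicting \(\beta+1<\alpha\)), so I can choose a non-zero \(d'_0\in D\) with \(d'_0 x_{\ell-1}^{b_{\ell-1}+1}\in R_{\ell-1}\) and set \(h_\beta:=d'_0 x_{\ell-1}^{b_{\ell-1}+1}\Delta_\ell\). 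Now the commutator's \(\tdeg\) matches \(\beta\) at every position \(\ge\ell-1\), while its \(\omega^{\ell-2}\)-coefficient is \(a_{\ell-1}+b_{\ell-1}+1>b_{\ell-1}\), again strictly exceeding \(\beta\).

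The point that requires care is the claim that Lemma~\ref{lem:maxmon} correctly identifies the top term of the full finite-difference expansion of \(\Delta_\ell(h_\beta)(f)\): lower order contributions come with higher powers of \(h_\beta\) and hence with larger exponents in \(x_1,\dots,x_{\ell-1}\), but they lose one unit at the dominant position \(\omega^{\ell-1}\), and under the lexicographic dominance of higher powers of \(\omega\) any finite excess below that position cannot compensate. This monotonicity is precisely what is recorded in Remark~\ref{rem:tdeg2}, so no separate argument beyond the case split above is needed.
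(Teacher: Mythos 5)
Your proof is correct and follows essentially the same route as the paper: take \(f\) to be a \(D\)-multiple of \(x^\Lambda\) with \(\Lambda\) read off from the Cantor normal form of \(\alpha\), then split on whether the gap at the top differing index is at least \(2\) (commutate with \(\Delta_\ell\)) or exactly \(1\) (commutate with a suitable power of \(x_{\ell-1}\) times \(\Delta_\ell\)), using Lemma~\ref{lem:maxmon} to control the leading term. The only difference is bookkeeping — you expand \(\beta\) rather than \(\beta+1\) in Cantor normal form, which lets you rule out \(\ell=1\) in the second case instead of treating it separately as the paper does — and your leading-term computation actually spells out details the paper leaves implicit.
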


\begin{proof}
	Write the ordinals \(\alpha\) and \(\beta + 1\) in their Cantor normal form:
	\[
	\alpha = \omega^{n-2}a_{n-1} + \dots +  \omega a_2 + a_1, \quad 
	\beta + 1 = \omega^{n-2} b_{n-1}+ \dots + \omega b_2 + b_1.
	\]
	Let \(f=cx^\Lambda\in R_{n-1}\) with \(0\ne c\in D\) and with \(\Lambda=(a_1,\dots,a_{n-1})\). We have
 \(\tdeg(x^\Lambda\Delta_n)=\alpha\). 
	Let \(j \in \{1, \dots, n - 1\}\) be the maximum index such that \(a_j > b_j\).
	
\noindent	
If $a_j-b_j>1$, then
\begin{equation*}
	\tdeg\left([f\Delta_n,\Delta_j]\right)>\beta.
\end{equation*}
If $a_j-b_j=1$, then
\begin{align*}
	\tdeg\bigl([f\Delta_n, c x_{j-1}^{|a_{j-1}-b_{j-1}|+1}\Delta_j]\bigr)&>\beta, \text{\ \ if \(j\ne 1\),}\\
	\tdeg\left([f\Delta_n,\Delta_1]\right)&>\beta, \text{\ \ if \(j= 1\).} \qedhere
\end{align*}
\end{proof}

\begin{lemma}
	Let \(g \in W_n \setminus B_n\). Then there exists \(h \in W_n\) such that \([g, h] \notin B_n\), unless \(g \in Z(W_{n-1})B_n\).
\end{lemma}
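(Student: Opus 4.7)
The plan is to reduce the question to $W_{n-1}$ via the canonical quotient $\pi\colon W_n \to W_n/B_n \cong W_{n-1}$. Since $B_n$ is normal in the semidirect product $W_n = B_n \rtimes W_{n-1}$, the map $\pi$ is a group homomorphism, so $[g,h]\notin B_n$ if and only if $[\pi(g),\pi(h)]\neq 1$ in $W_{n-1}$. Writing $g = b \cdot g'$ with $b\in B_n$ and $g'\in W_{n-1}$ (using the splitting of the wreath product), the hypothesis $g\notin B_n$ forces $g' = \pi(g)\neq 1$.

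The argument then splits into a clean dichotomy. Suppose first that $g' \notin Z(W_{n-1})$; then by definition of the center there exists some $h' \in W_{n-1}$ with $[g',h']\neq 1$, and lifting $h'$ to any $h\in W_n$ (e.g.\ via the canonical embedding of the complement $W_{n-1}\hookrightarrow W_n$) yields $\pi([g,h]) = [g',h'] \neq 1$, so that $[g,h]\notin B_n$, as desired. Suppose instead that $g' \in Z(W_{n-1})$; then $g = b\cdot g' \in B_n\cdot Z(W_{n-1}) = Z(W_{n-1})B_n$, which is precisely the exceptional class excluded by the statement.

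The only step requiring any verification is the interaction of the commutator with the projection $\pi$, which is the standard fact $\pi([x,y]) = [\pi(x),\pi(y)]$ together with the identification of $W_{n-1}$ as a complement of $B_n$ in $W_n$. Beyond this routine bookkeeping, the proof is simply the dichotomy on whether $\pi(g)$ centralizes $W_{n-1}$, so no real obstacle is anticipated.
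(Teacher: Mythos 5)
Your proof is correct, but it follows a different (and shorter) route than the paper's. The paper starts from the same observation — that $g$ has non-central image in $W_n/B_n\simeq W_{n-1}$ — but then insists on producing an explicit witness: it chooses $i$ minimal such that the component $g_i$ depends nontrivially on some variable $x_j$ with $j<i$, and checks by a component computation that $[g,\Delta_j]$ has a nontrivial $B_i$-part modulo $B_{i+1}\cdots B_n$, hence lies outside $B_n$. You instead remain entirely at the level of the quotient: since $B_n=\ker\pi$ and $W_n=B_n\rtimes W_{n-1}$ splits, the equivalence $[g,h]\notin B_n \iff [\pi(g),\pi(h)]\neq 1$, the definition of the centre, and a lift of a non-commuting partner through the complement finish the argument. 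Your route buys brevity and, in fact, robustness: it needs no choice of a special index, and it also covers the configuration in which every component $g_i$ is a constant function while $\pi(g)$ is still non-central (e.g.\ $g=\Delta_1$ for $n\ge 3$), a case where the paper's minimal index $i$ does not exist and its recipe produces no $h$, even though a witness such as $x_1\Delta_2$ clearly works. What the paper's computation buys is an explicit witness of the simplest possible form, namely a generator $\Delta_j$ of the canonical regular subgroup $T$; however, the only downstream use of the lemma (Corollary~\ref{corcor}) requires mere existence, which your argument supplies.
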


\begin{proof}
	Let \(g = \sum_{i=1}^{n-1} g_i \Delta_i \in W_n \setminus B_n\), and suppose \(g \notin Z(W_{n-1})B_n\). Then, the image of \(g\) in the quotient \(W_n / B_n \simeq W_{n-1}\) is not central. We can choose \(i\in \lbrace 1,\dots,n-1\rbrace\) minimal such that \(g_i\) depends nontrivially on the variable \(x_j\), for some \(j<i\).
	
	Consider the commutator
	\[
	[g, \Delta_j] = [g_i\Delta_i,\Delta_j]^{g_{i-1}\Delta_{i-1}\cdots g_1\Delta_1}\mod (B_{i+1}\cdots B_n)
	\]
	by minimality of \(i\). Hence, the term \([g,\Delta_j]\) lies in  \(B_i\) modulo \((B_{i+1}\cdots B_n)\) and therefore \([g, \Delta_j] \notin B_n\).
\end{proof}

\begin{lemma}
	Let \(0\ne c\in D\) be such that  \(c\Delta_{n-1}\in \W_{n-1}\). For every ordinal \(\alpha < \omega^{n-1}\) there exists \(h\in W_n\) such that \([c\Delta_{n-1},h]\notin \zeta^n_{\alpha}\).
\end{lemma}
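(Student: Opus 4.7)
The plan is to produce, for the given ordinal $\alpha<\omega^{n-1}$, an explicit monomial element $h=d_\Lambda x^\Lambda\Delta_n\in B_n\subseteq W_n$ whose commutator with $c\Delta_{n-1}$ has transfinite degree exactly $\alpha$; this suffices because, by definition, $\zeta^n_\alpha=\{f\Delta_n\mid\tdeg(f\Delta_n)<\alpha\}$. I would first write $\alpha$ in Cantor normal form,
\[
\alpha=\omega^{n-2}a_{n-1}+\omega^{n-3}a_{n-2}+\dots+\omega a_2+a_1,
\]
and then take the partition $\Lambda=(a_1,\dots,a_{n-2},a_{n-1}+1)$, so that $x^\Lambda=x_1^{a_1}\cdots x_{n-2}^{a_{n-2}}x_{n-1}^{a_{n-1}+1}$. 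The standing hypothesis $R_{n-1}\otimes_D F\supseteq F[x_1,\dots,x_{n-1}]$ supplies a nonzero $d_\Lambda\in D$ with $d_\Lambda x^\Lambda\in R_{n-1}$, so $h$ genuinely belongs to $B_n$.

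Next I would apply the commutator formula~\eqref{parallelism} with $k=n-1<u=n$, obtaining
\[
[c\Delta_{n-1},h]=-d_\Lambda\bigl(\Delta_{n-1}(c)(x^\Lambda)\bigr)\Delta_n=-d_\Lambda\, x_1^{a_1}\cdots x_{n-2}^{a_{n-2}}\bigl((x_{n-1}+c)^{a_{n-1}+1}-x_{n-1}^{a_{n-1}+1}\bigr)\Delta_n.
\]
A binomial expansion identifies the monomial of highest transfinite degree inside the parenthesis, namely $(a_{n-1}+1)\,c\,x_{n-1}^{a_{n-1}}$, so that the leading monomial of the commutator is
\[
-(a_{n-1}+1)\,c\,d_\Lambda\, x_1^{a_1}\cdots x_{n-2}^{a_{n-2}}x_{n-1}^{a_{n-1}}\Delta_n,
\]
whose transfinite degree equals $\alpha$ exactly, by construction of $\Lambda$.

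The only point that needs attention is verifying that this leading coefficient does not collapse. Since $D$ has characteristic $0$ while $c\neq 0$, $d_\Lambda\neq 0$ and $a_{n-1}+1\ge 1$, the scalar $-(a_{n-1}+1)c\,d_\Lambda$ is a nonzero element of $D$; and every other monomial produced by the binomial expansion involves a strictly smaller power of $x_{n-1}$, so no cancellation with the leading term is possible. Hence $\tdeg([c\Delta_{n-1},h])=\alpha$ and the commutator lies outside $\zeta^n_\alpha$. The construction is uniform in $\alpha$, so the argument automatically handles limit ordinals and the case $\alpha=0$, where $\Lambda=(0,\dots,0,1)$ yields the nonzero element $[c\Delta_{n-1},d_\Lambda x_{n-1}\Delta_n]=-c\,d_\Lambda\Delta_n$.
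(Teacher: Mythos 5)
Your proof is correct and takes essentially the same route as the paper: there, too, $h$ is chosen as a monomial $x^\Lambda\Delta_n$ built from the Cantor normal form of $\alpha$, the only difference being that the paper uses the exponent $a_{n-1}+2$ on $x_{n-1}$ where you use $a_{n-1}+1$ (either choice forces $\tdeg([c\Delta_{n-1},h])\ge\alpha$, hence the commutator lies outside $\zeta^n_\alpha$). Your extra care in inserting the scalar $d_\Lambda$ so that $h$ genuinely belongs to $R_{n-1}$, and in checking that the leading coefficient $-(a_{n-1}+1)c\,d_\Lambda$ is nonzero in characteristic $0$, only makes explicit what the paper's one-line verification leaves implicit.
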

\begin{proof}
	Write  \(\alpha = \omega^{n-2}a_{n-1} + \dots +  \omega a_2 + a_1 \). If \(\Lambda=(a_,\dots, a_{n-2}, a_{n-1}+2 )\), then \(h=x^\Lambda\Delta_n\) is the desired element.
\end{proof}

A direct consequence of the previous results is the following.

\begin{corollary}\label{corcor}
	\(
	Z_{\omega^{n-1} + 1}(W_n) = Z(W_{n-1})B_n
	\) and 
		\(
	Z_{\omega^{n-1}}(W_n) = B_n
	\).	
	In particular  
	\(	\zeta_\alpha = Z_\alpha(W_n) \subseteq B_n\) whenever \(\alpha \le \omega^{n-1}\).
\end{corollary}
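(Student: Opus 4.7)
The plan is to first establish the containment \(Z_\alpha(W_n)\subseteq B_n\) for every \(\alpha\le \omega^{n-1}\) by transfinite induction. Combined with Corollary~\ref{cor:centri}, this immediately gives the third displayed equality \(Z_\alpha(W_n)=\zeta^n_\alpha\), and from the resulting identification \(Z_{\omega^{n-1}}(W_n)=B_n\) the description of \(Z_{\omega^{n-1}+1}(W_n)\) follows via a single application of the lemma that dichotomises elements outside \(B_n\).

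For the induction, the case \(\alpha=0\) is vacuous and limit ordinals are automatic by the definition of the transfinite upper central series. For a successor \(\alpha=\beta+1\le\omega^{n-1}\), I would argue by contradiction: suppose \(g\in Z_\alpha(W_n)\setminus B_n\). The second preceding lemma yields two alternatives. In the first, some \(h\in W_n\) satisfies \([g,h]\notin B_n\), contradicting \([g,h]\in Z_\beta(W_n)\subseteq B_n\) by the inductive hypothesis. In the second, \(g\in Z(W_{n-1})B_n\), so we may write \(g=c\Delta_{n-1}\cdot b\) with \(0\ne c\in D\) and \(b\in B_n\); applying the third preceding lemma to \(\beta<\omega^{n-1}\) produces an \(h\in B_n\) with \([c\Delta_{n-1},h]\notin\zeta^n_\beta\). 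Since \(B_n\) is abelian, expanding yields \([g,h]=[c\Delta_{n-1},h]^{b}\,[b,h]=[c\Delta_{n-1},h]\), which therefore lies in \(Z_\beta(W_n)\cap B_n=\zeta^n_\beta\) by Corollary~\ref{cor:centri}, a contradiction.

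Together with Corollary~\ref{cor:centri}, this proves \(Z_\alpha(W_n)=\zeta^n_\alpha\) for every \(\alpha\le\omega^{n-1}\). At \(\alpha=\omega^{n-1}\), the standing hypothesis \(R_{n-1}\subseteq\LP_{n-1}\) ensures \(\tdeg(f\Delta_n)<\omega^{n-1}\) for every \(f\in R_{n-1}\), so \(B_n\subseteq\zeta^n_{\omega^{n-1}}=Z_{\omega^{n-1}}(W_n)\), whence \(Z_{\omega^{n-1}}(W_n)=B_n\). For the remaining equality, note that \(Z_{\omega^{n-1}+1}(W_n)=\{g\in W_n:[g,h]\in B_n\text{ for all }h\in W_n\}\): the inclusion \(Z(W_{n-1})B_n\subseteq Z_{\omega^{n-1}+1}(W_n)\) is a direct commutator expansion using that any \(c\Delta_{n-1}\in Z(W_{n-1})\) is central in \(W_{n-1}\) modulo \(B_n\) and that \(B_n\) is normal abelian, while the reverse inclusion is exactly the contrapositive of the dichotomy lemma.

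The main subtlety is the collapse \([g,h]=[c\Delta_{n-1},h]\) in case (ii) of the successor step, which relies on the particular choice \(h\in B_n\) supplied by the third preceding lemma together with \(B_n\) being abelian; once this is seen, the remainder of the corollary is a straightforward reassembly of the three preceding lemmas with Corollary~\ref{cor:centri}.
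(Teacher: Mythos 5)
Your argument is correct and is essentially the assembly the paper intends: the corollary is stated there as a ``direct consequence of the previous results,'' and your transfinite induction simply spells out how the dichotomy lemma, the lemma on \([c\Delta_{n-1},h]\), and Corollary~\ref{cor:centri} combine to give \(Z_\alpha(W_n)=\zeta^n_\alpha\subseteq B_n\) for \(\alpha\le\omega^{n-1}\) and then the two displayed equalities. The only point to note is that the witness \(h\in B_n\) in your case (ii) is read off from the \emph{proof} (not the statement) of that lemma, where \(h\) is a suitable \(D\)-multiple of \(x^\Lambda\Delta_n\); since you flag this and the abelian-normality of \(B_n\) then gives \([g,h]=[c\Delta_{n-1},h]\), the argument is sound.
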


        We are now ready to give  the claimed proof.
        \begin{proof}[Proof of Theorem~\ref{thm:hypercentralpolynomials}]
        We argue by induction on \(n\). The case \(n=1\) being trivial.

By Corollary~\ref{corcor}, we know that
          $Z_{\omega^{n-1}}(W_n)=B_n$ and by inductive hypothesis we  may
          assume
          that
          \[Z_{\omega^{n-2}+\dots+\omega+1}(W_{n-1})=W_{n-1}\cong
            W_n/B_n=W_n/(Z_{\omega^{n-1}}(W_n)).\] Hence \(W_n\) is transfinite hypercentral and \(W_n=Z_{\omega^{n-1}+\dots+\omega+1}(W_{n-1})\).
        \end{proof}
	\begin{remark}
          When \(R_i\otimes F=F[x_1,\ldots, x_i]\) we can give an explicit  description of the $\alpha$-th term
          of  the  upper  central  series  of  $W_n$  by  way  of  the
          definition of transfinite degree, more specifically
          \begin{equation}\label{centers of W_n}
            Z_\alpha(W_n)=\{g\in W_n \mid \tdeg(g) < \alpha\}
          \end{equation}
          and
          \begin{equation}\label{followingcenters}
            Z_{\alpha+1}(W_n)= \{c x^\Lambda\Delta_k \mid c\in D\} \ltimes  Z_\alpha
          \end{equation} where \(\tdeg(x^\Lambda\Delta_k)=\alpha\).
	\end{remark}
%
	
        \section{The correspondence between \(W_n\) and \(\Lie_{n}\)
          }
       
           \subsection{Saturated subgroups}
          From now on, unless explicitly stated otherwise, we consider  $R_i=  D[x_1,\dots,x_i]$,  for $i=1,\dots,n-1$. 
          In this case every element $f\in W_n$  can be uniquely
          decomposed  as the product $  f=\prod_\alpha  c_\alpha  b_\alpha$,  where
          \(c_\alpha\in  D\) and  \(b_\alpha\)  is  the unique  monomial
          element of   \(\mathcal{B}\)        such       that
          \(\tdeg b_\alpha= \alpha\).
          



          \begin{definition}
          	Let $H\le  W_n$ be a  $D$-subgroup, we shall  say that $H$  is a
          	saturated         subgroup        of         $W_n$        if
          	$f=\prod_\alpha     c_\alpha     b_\alpha\in    H$,     with
          	\(c_\alpha\in D\), then  $b_\alpha \in H$ for  all 
          	ordinals $\alpha$.
          \end{definition}
          \begin{remark}\label{remorder}
          	Notice  that for  \(H\)  to be  saturated  is 
          	sufficient that if $f\in H$ and \(\M(f)=c_\alpha b_\alpha\),
          	where  \(c_\alpha\in  D\) and  \(b_\alpha\in  \mathcal{B}\),
          	then $b_\alpha\in H$.  Another  equivalent condition is that
          	\(H = \left< c_\alpha b_\alpha \mid c_\alpha\in D \text{ and
          	} b_\alpha\in  H\cap \mathcal{B} \right>\).  An immediate
          	consequence                      is                     that
          	\(H=(H\cap B_n)\rtimes \dots \rtimes (H\cap B_1)\).
          \end{remark}

          \begin{lemma}\label{prophomo1}
          	Let $H\le W_n$ be a saturated subgroup. 
          	If $f\in \N_{W_n}(H)$ and \(\M(f)=c_\alpha b_\alpha\), then  $b_\alpha \in \N_{W_n}(H)$.
          \end{lemma}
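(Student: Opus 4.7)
The plan is to argue by transfinite induction on $\alpha=\tdeg(f)$. The base case $\alpha=0$ forces $b_\alpha=\Delta_n$, which is central in $W_n$ by Remark~\ref{rem:center} and therefore normalizes every subgroup. For the inductive step, assume the conclusion holds whenever the leading monomial has transfinite degree strictly less than $\alpha$, and let $f\in\N_{W_n}(H)$ with $\M(f)=c_\alpha b_\alpha$ and $\tdeg(f)=\alpha>0$. Write $f=(c_\alpha b_\alpha)\,g$ with $\tdeg(g)<\alpha$.

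By Remark~\ref{remorder}, $H$ is generated as a $D$-subgroup by the monic monomials it contains, so it suffices to show $[h,b_\alpha]\in H$ for every $h\in H\cap\mathcal{B}$. If $h$ and $b_\alpha$ sit in the same base subgroup then $[h,b_\alpha]=0$ by Equation~\eqref{parallelism}. Otherwise, direct computation from Equation~\eqref{parallelism} gives the $D$-linear identity $[h,c_\alpha b_\alpha]=c_\alpha[h,b_\alpha]$; since $c_\alpha\neq 0$ and $H$ is saturated, the conclusion $[h,b_\alpha]\in H$ follows as soon as $[h,c_\alpha b_\alpha]\in H$, because saturation of $H$ forces every monic monomial appearing in $c_\alpha[h,b_\alpha]$, equivalently in $[h,b_\alpha]$, to lie in $H$.

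To establish $[h,c_\alpha b_\alpha]\in H$, I would invoke the identity $[h,f]=[h,g]\,[h,c_\alpha b_\alpha]^g$ and use the leading-term formula of Lemma~\ref{lem:maxmon} together with the Taylor expansion in Equation~\eqref{eqTaylor} to prove: (i) $\tdeg([h,g])<\tdeg([h,c_\alpha b_\alpha])$, since $\tdeg(g)<\alpha$; and (ii) $\M([h,c_\alpha b_\alpha]^g)=\M([h,c_\alpha b_\alpha])$, since conjugation by the low-degree element $g$ contributes only additional terms of strictly smaller transfinite degree. Combined they force $\M([h,f])=\M([h,c_\alpha b_\alpha])$. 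Since $[h,f]\in H$ and $H$ is saturated, the monic monomial underlying this common leading term lies in $H\cap\mathcal{B}$. Crucially, because both $h$ and $c_\alpha b_\alpha$ sit in single base subgroups, $[h,c_\alpha b_\alpha]$ lies in a single abelian base subgroup $B_k$, so I can subtract this leading monomial to obtain an element of strictly smaller transfinite degree and re-apply the same argument. A transfinite induction on $\tdeg([h,c_\alpha b_\alpha])$, which is well-founded, then yields $[h,c_\alpha b_\alpha]\in H$.

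The principal difficulty is rigorously verifying the two leading-term facts (i) and (ii): these amount to transfinite ordinal bookkeeping based on Equation~\eqref{eqTaylor} and Lemma~\ref{lem:maxmon}, made slightly delicate by the fact that $g$ need not be supported in a single base subgroup, so $[h,g]$ must be expanded term-by-term over the monic monomial decomposition of $g$ and each piece compared with the leading term of $[h,c_\alpha b_\alpha]$. Once these are in hand, the iterated leading-term subtraction is unproblematic because the commutator $[h,c_\alpha b_\alpha]$ always lies in one abelian base subgroup, and the final reduction from $c_\alpha[h,b_\alpha]\in H$ to $[h,b_\alpha]\in H$ is immediate from saturation of $H$ and the integral-domain hypothesis on $D$.
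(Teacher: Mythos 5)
Your opening move is the same as the paper's (compare $\M([h,f])$ with $\M([h,c_\alpha b_\alpha])$ and use saturation), but the proof does not close, and the gap is precisely at the iteration. The leading-term facts (i) and (ii) only ever yield the \emph{first} Taylor term: from $[h,f]\in H$ and saturation you get $d_1=\M([h,c_\alpha b_\alpha])\in H$ (up to its coefficient). After subtracting $d_1$ there is nothing to ``re-apply the same argument'' to: the lower-order part of $[h,f]$ is contaminated by $[h,g]$ and by the conjugation cross-terms, which are only known to have transfinite degree below $\tdeg(d_1)$, not below $\tdeg(d_2)$, so the second monomial of $[h,f]$ need not be $d_2$; moreover the truncated element $[h,c_\alpha b_\alpha]-d_1$ is not of the form $[h',c'b']$ covered by any inductive statement you have set up, so the proposed ``transfinite induction on $\tdeg([h,c_\alpha b_\alpha])$'' has nothing to induct on (your outer induction on $\alpha=\tdeg(f)$ is likewise never invoked). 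This is exactly where the paper inserts the key device missing from your sketch: once $d_r\in H$ is known, form the \emph{new} commutator $[d_r,f]$ (respectively $[d_r,x^\Lambda\Delta_u]$, according to which base subgroup is higher); it lies in $H$ because $f$ normalizes $H$ (respectively because both entries lie in $H$), and by Lemma~\ref{lem:maxmon} its leading term is a nonzero $D$-multiple of $d_{r+1}$, so saturation gives $d_{r+1}\in H$. Induction on $r$ then puts every Taylor term of $[h,c_\alpha b_\alpha]$ in $H$, hence $[h,c_\alpha b_\alpha]\in H$ and, by saturation again, $[h,b_\alpha]\in H$. Without this (or an equivalent mechanism) the argument stops after the first term.

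A secondary but real error: the ``$D$-linear identity'' $[h,c_\alpha b_\alpha]=c_\alpha[h,b_\alpha]$ is false when $h$ lies in a higher base subgroup than $b_\alpha$. By Equation~\eqref{parallelism} the lower entry acts by substitution into the higher one, so by Equation~\eqref{eqTaylor} the $r$-th Taylor term of $[h,c_\alpha x^\Theta\Delta_k]$ carries $c_\alpha^{\,r}$; for instance $[x_k^2\Delta_u,\,c_\alpha x^\Theta\Delta_k]=(2c_\alpha x^\Theta x_k+c_\alpha^{2}x^{2\Theta})\Delta_u\neq c_\alpha(2x^\Theta x_k+x^{2\Theta})\Delta_u$ unless $c_\alpha^2=c_\alpha$. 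The reduction you want does survive, because the monic monomials occurring in $[h,c_\alpha b_\alpha]$ and in $[h,b_\alpha]$ coincide and the coefficients of the latter are integers, so $[h,c_\alpha b_\alpha]\in H$ together with saturation and $D$-closure still gives $[h,b_\alpha]\in H$; but the justification must be this term-by-term comparison, not linearity. Finally, fact (i) fails in the degenerate case $[h,c_\alpha b_\alpha]=0$ (when $b_\alpha$ does not involve the relevant variable), which should be split off as in Lemma~\ref{lem:maxmon}, although there the desired conclusion is trivial.
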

          
          \begin{proof}
          	Let \(f=f_\alpha \prod_{\beta<\alpha} f_\beta\), where \(f_\alpha=\M(f)\)
          	and let \(x^\Lambda\Delta_u\) be a generic monic monomial element of \(H\). 
          	By hypotheses we have
          	\begin{equation*}
          		\textstyle	H\ni [f,\, x^\Lambda\Delta_u]=[f_\alpha,\, x^\Lambda\Delta_u][f_\alpha,\, x^\Lambda\Delta_u,\, \prod_{\beta<\alpha} f_\beta] [\prod_{\beta<\alpha} f_\beta, \, x^\Lambda\Delta_u].
          	\end{equation*}
          	We have to show that \([b_\alpha, x^\Lambda\Delta_u]\in H\).
          	Notice that \(f_\alpha=c_\alpha x^\Theta\Delta_k\in B_k\) for some \(k\) and for some \(\Theta\), so that we can write \([f_\alpha,\,  x^\Lambda\Delta_u]=d_1\cdots d_i\) where the \(d_j\)'s are monomial elements in decreasing order \(d_1\succ \dots \succ d_i \). 
          	
          	If \(k=u\), then \([b_\alpha,\, x^\Lambda\Delta_u]=1\in H\) and anything is left to be proved.
          	
          	If \(k<u\), then \([f_\alpha,\, x^\Lambda\Delta_u] = \sum_{r=1}^{\lambda_k}\tfrac{\partial^r x^\Lambda}{\partial x_k^r}\tfrac{(c_\alpha x^{\Theta})^r}{r!} \Delta_u \), where $\lambda_k$ is the degree of $x_k$ in $x^\Lambda$. Thus, \(d_r=\tfrac{\partial^r x^\Lambda}{\partial x_k^r}\tfrac{(c_\alpha x^{\Theta})^r}{r!} \Delta_u \). It is easy to see that \( d_1\) is the leading term of the element 
          	\([f,\, x^\Lambda\Delta_u]\in H\). Since \(H\) is a saturated  subgroup it follows that \(d_1\in H\). In the same fashion, by induction \((r+1) d_{r+1}\) is the leading term of \([d_{r},f]\in H\), and so \(d_{r+1}\in H\). We get that $d_r\in H$ for all $r$, and consequently, \([f_\alpha,\, x^\Lambda\Delta_u]\in H\). Notice that, since \(d_j=c_j b_j\) is an element of the saturated subgroup \(H\), with  \(c_j \in D\)  and \(b_j\in\mathcal{B}\), then \(b_j\in H\) for all \(1\leq j\leq i\), and so \([b_\alpha,\, x^\Lambda\Delta_u]\in H\)
          	
          	If \(k>u\), then $d_r=\sum_{r=1}^{\theta_k} c_\alpha \tfrac{ \partial^r  x^\Theta}{x_u^r}\tfrac{(x^\Lambda)^r}{r!}\Delta_k$ and the rest of the proof is analogous to the case $k<u$ arguing induction to see that \((r+1)d_{r+1} = M([d_r, x^\Lambda \Delta_u]) \in H\).
          \end{proof}

          By Remark~\ref{remorder} and Lemma~\ref{prophomo1} the following statement follows. 
          \begin{theorem}\label{omogeneità}
          	The normalizer in $W_n$ of a saturated subgroup is also saturated.
          \end{theorem}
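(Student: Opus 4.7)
I would follow the hint preceding the theorem and combine Remark~\ref{remorder} with Lemma~\ref{prophomo1}. Setting $N \defeq \N_{W_n}(H)$, Remark~\ref{remorder} says that a $D$-subgroup of $W_n$ is saturated iff it contains the monic leading monomial $b_\alpha$ of the leading term $\M(f)=c_\alpha b_\alpha$ of each of its elements $f$. Lemma~\ref{prophomo1} supplies exactly this condition for $N$, so the only remaining point is to check that $N$ is itself a $D$-subgroup.

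To do this, I would argue by transfinite induction on $\tdeg(f)$, establishing the stronger statement that every $b_\alpha$ appearing in the decomposition $f=\prod_\alpha c_\alpha b_\alpha$ of any $f\in N$ belongs to $N$; this exhibits $N$ as the $D$-subgroup generated by its monic monomial elements. The inductive step starts with Lemma~\ref{prophomo1}, which places the leading monic $b_{\alpha_0}$ in $N$. The crucial sub-step is to upgrade this to $c_{\alpha_0}b_{\alpha_0}\in N$: once available, abelian-ness of the base $B_k$ containing $b_{\alpha_0}$ gives $f\cdot (c_{\alpha_0}b_{\alpha_0})^{-1}\in N$ with strictly smaller transfinite degree, so the induction closes.

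The hard part will be precisely this upgrade: showing $cb\in N$ whenever $b\in N\cap\mathcal{B}$ and $c\in D$. My plan is to verify $[cb,h]\in H$ for all $h\in H$, reducing via standard commutator identities to the case $h\in H\cap B_j$. Equation~\eqref{parallelism} produces two sub-cases: for $j\le k$ one obtains $[cb,h]=c[b,h]\in H$ at once, using $D$-linearity of the difference operator together with the $D$-subgroup property of $H$; for $j>k$ the Taylor expansion~\eqref{eqTaylor} writes $[cb,h]=\sum_{s\ge 1} c^s T_s$, where $\sum_s T_s = [b,h]\in H$ and each $T_s$ is a $D$-linear combination of monomial basis elements. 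Here it is essential to use the full saturation of $H$—not merely the conclusion of Lemma~\ref{prophomo1} for $N$—to extract each monic monomial appearing in any $T_s$ individually, and then to recombine them with coefficients $c^s$ inside the $D$-subgroup $H$, yielding $[cb,h]\in H$ and hence $cb\in N$ as required.
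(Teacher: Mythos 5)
Your route is the paper's: the theorem is obtained precisely by combining Remark~\ref{remorder} with Lemma~\ref{prophomo1}, and the paper's proof is exactly that one line, leaving the fact that $\N_{W_n}(H)$ is a $D$-subgroup to the standing convention that all subgroups considered are $D$-subgroups. Your decision to verify the $D$-closure explicitly is legitimate, and reducing it to the claim that $cb\in \N_{W_n}(H)$ whenever $b\in \N_{W_n}(H)\cap\mathcal{B}$ and $c\in D$ is the right target; the case $j\le k$ and the closing transfinite induction are fine.

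However, the case $j>k$ does not work as written. From $[b,h]=\sum_s T_s\in H$, saturation of $H$ only extracts the monic monomials that survive with non-zero coefficient in the \emph{total} sum; it does not give you the monomials of each $T_s$ ``individually'', because cancellation between different Taylor terms can occur when $h$ is not a single monomial. For instance, with $b=\Delta_1$ and $h=(x_1^2-x_1)\Delta_2$ one gets $[b,h]=-2x_1\Delta_2$, so your extraction only yields $x_1\Delta_2\in H$, whereas $[cb,h]=-(2cx_1+c^2-c)\Delta_2$ also involves the monomial $\Delta_2$, whose membership in $H$ your argument has not established. The repair stays inside your framework: since $H$ is saturated, first write $h\in H\cap B_j$ as a $D$-combination of monic monomials of $H\cap\mathcal{B}$ and use the additivity in $h$ of the commutator in Equation~\eqref{parallelism} to reduce to $h=x^\Theta\Delta_j$ a monomial. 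Then the terms of the expansion \eqref{eqTaylor}, namely $\binom{\theta_k}{s}x^{\bar\Theta}x_k^{\theta_k-s}(x^\Lambda)^s\Delta_j$, are pairwise distinct monomials (their $x_k$-exponents $\theta_k-s$ differ, because $\Lambda\in\mathcal{P}(k-1)$ does not involve $x_k$) with non-zero integer coefficients, so saturation applied to $[b,h]\in H$ does put each of them in $H$, and recombining with the coefficients $c^s$ inside the $D$-subgroup $H$ gives $[cb,h]\in H$ as you intended.
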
 
          \begin{definition}
          	Let $G\leq W_n$, we shall say that $G$ is full if for all $g=\prod_\alpha c_\alpha b_\alpha\in G$, with $c_\alpha=d\bar c_\alpha\in D$ for some \(d\ne 0\), also $\bar g=\prod_\alpha \bar c_\alpha b_\alpha\in G$.
          \end{definition}
          Notice that if $D$ is a field, every $D$-subgroup is full.
           \begin{remark}
          	An easy argument shows that a D-subgroup \(H\le W_n\) is saturated if and only if it is \(D\)-generated by \(H\cap \mathcal{B}\), i.e., 
          	the elements of \(H\) are exactly those that can be (uniquely in decreasing order) written in the form	\(\prod_{g\in H\cap \mathcal{B}} c_gg \), where \(c_g\ne 0\) for finitely many \(g\). In particular every saturated subgroup is full. The converse is not true, indeed the \(D\)-subgroup \(H=\Set{d(x_1+x_2)\Delta_n \mid d\in D}\) is a full subgroup that is not saturated.
          \end{remark}
          From now on  we call \emph{full \(D\)-normal closure} of a subgroup \(H\le W_n\) the minimal full normal  \(D\)-subgroup of \(W_n\) containing \(H\).
           	\begin{lemma}\label{normalclosure}
          		Let \(g\in W_n\) be an element of transfinite degree \(\alpha\). The full \(D\)-normal closure \(N\) of the \(D\)-subgroup \(H=D\left< g\right>\) is \(Z_{\alpha+1}\).
          	\end{lemma}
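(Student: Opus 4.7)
I will prove the lemma by transfinite induction on \(\alpha=\tdeg(g)\), handling each containment separately at each stage.

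For the inclusion \(N\subseteq Z_{\alpha+1}\), it suffices to verify that \(Z_{\alpha+1}\) is itself a full, normal \(D\)-subgroup of \(W_n\) containing \(g\); the minimality of \(N\) then forces the containment. Normality is the defining property of \(Z_{\alpha+1}\); closure under the \(D\)-action is obvious from \eqref{centers of W_n} because scalar multiplication preserves \(\tdeg\); fullness follows from the domain hypothesis on \(D\), since if every coefficient of an element of \(Z_{\alpha+1}\) is divisible by some non-zero \(d\), then dividing them all by \(d\) leaves the support unchanged and in particular preserves the condition \(c_\beta=0\) for \(\beta>\alpha\). Finally \(g\in Z_{\alpha+1}\) because \(\tdeg(g)=\alpha\).

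For the reverse inclusion \(Z_{\alpha+1}\subseteq N\), I argue by transfinite induction on \(\alpha\) and split the inductive step into two parts: first, proving \(Z_\alpha\subseteq N\); second, deducing \(b_\alpha\in N\). Combining these with \eqref{followingcenters} gives \(Z_{\alpha+1}=Db_\alpha\cdot Z_\alpha\subseteq N\).

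For the first part, I observe that for every \(h\in W_n\) the commutator \([g,h]\in N\) has \(\tdeg<\alpha\), because the image of \(g\) in \(W_n/Z_\alpha\) is central by \eqref{centers of W_n}; the inductive hypothesis applied to \([g,h]\) identifies its full \(D\)-normal closure as \(Z_{\tdeg([g,h])+1}\), which is therefore contained in \(N\). It then suffices to show that \(\{\tdeg([g,h]):h\in W_n,\;[g,h]\neq 1\}\) is cofinal in \(\alpha\). Writing \(\M(g)=c_\alpha x^\Lambda\Delta_k\), the Taylor expansion \eqref{eqTaylor} and Lemma~\ref{lem:maxmon} yield \(\M([\M(g),x^\Theta\Delta_u])=c_\alpha\,\tfrac{\partial x^\Lambda}{\partial x_u}\,x^\Theta\Delta_k\), whose \(\tdeg\) can be tuned via the choice of \(\Theta\) and \(u\). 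In the successor case \(\alpha=\beta_0+1\), the Cantor normal form of \(\alpha\) has non-zero constant term, forcing \(\lambda_1\geq 1\); then \(h=\Delta_1\) produces \(\tdeg([g,h])=\beta_0\) exactly. In the limit case, for each \(\beta<\alpha\) I will construct \(h=x^\Theta\Delta_u\) so that \(\tdeg([\M(g),h])\geq\beta\) by matching the Cantor normal form of \(\beta\) against that of \(\alpha\). Throughout, the lower-order summands of \(g\) contribute to \([g,h]\) only at strictly smaller \(\tdeg\), so the leading term of \([g,h]\) coincides with that of \([\M(g),h]\).

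For the second part, having \(Z_\alpha\subseteq N\), I write \(g=(c_\alpha b_\alpha)\,g_1\) with \(g_1\in Z_\alpha\subseteq N\). Then the single-monomial element \(c_\alpha b_\alpha=g g_1^{-1}\) lies in \(N\), and applying fullness with \(d=c_\alpha\) immediately gives \(b_\alpha\in N\), completing the induction.

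\textbf{Main obstacle.} The technical heart of the argument is the cofinality step for limit ordinals: for each \(\beta<\alpha\) one must explicitly exhibit a monomial \(h\) such that \([\M(g),h]\) attains \(\tdeg\geq\beta\). This is a combinatorial matching problem between the Cantor normal forms of \(\alpha\) and \(\beta\) mediated by \eqref{eqTaylor}, and one must simultaneously verify that the tail of \(g\) does not cancel the leading contribution in \([g,h]\)---a point that ultimately reduces to the observation that conjugation by an element of \(Z_\alpha\) preserves leading terms.
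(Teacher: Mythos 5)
Your overall architecture is the same as the paper's: transfinite induction on \(\alpha\), commutators of \(g\) with monomial elements to reach a cofinal set of degrees below \(\alpha\) and then invoke the inductive hypothesis, fullness to strip the leading coefficient and get \(b_\alpha\in N\), and minimality (together with the observation that \(Z_{\alpha+1}\) is itself a full normal \(D\)-subgroup) for \(N\subseteq Z_{\alpha+1}\). The problem is that the step you defer as the ``main obstacle'' is exactly the content of the paper's proof, and the one mechanism you sketch for it cannot handle all limit cases. Your tuning formula \(\M([\M(g),x^\Theta\Delta_u])=c_\alpha\frac{\partial x^\Lambda}{\partial x_u}x^\Theta\Delta_k\) comes from Lemma~\ref{lem:maxmon} and presupposes \(u<k\), i.e.\ you only commute \(g\) downward, landing in \(B_k\). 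This breaks down when \(\M(g)=c\Delta_k\) with \(k<n\) (so \(\Lambda=0\)), which is a limit case with \(\alpha=\omega^{n-1}+\dots+\omega^{k}\): there \(\partial x^\Lambda/\partial x_u=0\) for every \(u\), and in fact \(c\Delta_k\) centralizes all of \(B_1,\dots,B_{k-1}\) (take \(g=c\Delta_k\) with no tail: every commutator \([g,x^\Theta\Delta_u]\) with \(u<k\) vanishes, so no choice of such \(h\) yields anything). The paper handles this case by commuting \emph{upward}: it takes \(h=x_k^{s+1}\Delta_{k+1}\in B_{k+1}\), so that \([h,g]\in B_{k+1}\) has degree \(\beta_s=\omega^{n-1}+\dots+\omega^{k+1}+s\,\omega^{k-1}\) with \(\sup_s\beta_s=\alpha\); and when \(\Lambda\neq0\) it takes \(h=x_{\ell-1}^{\,s}\Delta_\ell\) (or \(\Delta_1\) when \(\ell=1\)), where \(\ell\) is the least index with \(\lambda_\ell\neq0\). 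Without these explicit choices, or equivalent ones covering the constant-leading-monomial case, your cofinality claim is unproven, so the proposal has a genuine gap at its crucial point.

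A secondary issue: your claim that the lower-order factors of \(g\) contribute to \([g,h]\) only at strictly smaller \(\tdeg\) is true for the relevant choices of \(h\) but needs more than ``conjugation by \(Z_\alpha\) preserves leading terms.'' Writing \(g=\M(g)\,g_1\), one has \([g,h]=[\M(g),h]^{g_1}\,[g_1,h]\); the conjugation only perturbs by terms of lower degree (by the central filtration), but you must also check \(\tdeg([g_1,h])<\tdeg([\M(g),h])\). This is a coefficientwise comparison of Cantor normal forms for the components of \(g_1\) in \(B_k\), and for components in \(B_{k''}\) with \(k''>k\) it uses that their degree is already below the offset \(\sum_{i=1}^{n-k}\omega^{n-i}\); in the case \(\M(g)=c\Delta_k\), where the chosen \(h\) lies in \(B_{k+1}\), the corresponding check is again different from what you wrote. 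The remaining parts of your argument (the upper bound, the successor case with \(h=\Delta_1\), and extracting \(b_\alpha\) from fullness once \(Z_\alpha\subseteq N\)) agree with the paper and are fine.
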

          	\begin{proof}
      	Let   \(cx^\Lambda\Delta_k\) be the leading term \(\M(g)\) of \(g\ne 1\), for some $1\le k\le n$. If \(\alpha=0\) then \(H=Z_1\) and  nothing needs  to be  proved. We now argue by transfinite induction on \(\alpha\).
        Suppose first that \(\Lambda=0\) so that \(\M(g)=c\Delta_k\) with \(k<n\) and \(c\in D\). The transfinite degree of  \(\M(g)\) is then \(\alpha=w^{n-1}+\dots+w^{k}\). If  \(\beta_{s}= w^{n-1}+\dots+w^{k+1}+sw^{k-1}\) then \(\sup_{s}\beta_s=\alpha\).  The commutator element \(u_s=[x_k^{s+1}\Delta_{k+1}\,,\, g]\in N\) has transfinite degree \(\beta_s< \alpha\), hence, by transfinite induction, the full \(D\)-normal closure of \(D\left< u_s\right>\) is \(Z_{\beta_s+1}\). It follows that \(N\ge \bigcup_s Z_{\beta_s+1}=Z_\alpha\).  Since \(g\equiv c\Delta_k \bmod Z_{\alpha}\) and \(N\) is full, the element \(\Delta_k\) belongs to \(N\).  Thus, \(N\ge D\left< \Delta_k\right>\rtimes Z_\alpha = Z_{\alpha+1}\ge H\) and so \(N=Z_{\alpha+1}\).
        We now deal with the case \(\Lambda\ne 0\). Let \(\ell\) be the minimum index with respect to the condition \(\lambda_\ell\ne 0\). Then $x^\Lambda=x_\ell x^\Theta$ where $\theta_i=\lambda_i-\delta_{i\ell}$. Let \(x^{\Gamma_s}=(x_{\ell-1})^s\)  if \(\ell>1\) and \(x^{\Gamma_s}=1\) otherwise. The commutator \([ g\,,\,  x^{\Gamma_s}\Delta_{\ell}] \in N\) has transfinite degree \(\epsilon_s\), with \(\sup_{s}\epsilon_s=\alpha\) when \(\ell>1\) and \(\sup_{s}\epsilon_s=\alpha-1\) when \(\ell=1\). Reasoning as above we have \(Z_\alpha\le N\) and finally \(N=Z_{\alpha+1}\). 
  \end{proof}          	
          \begin{proposition}\label{prop:saturatednormal}
          A normal full $D$-subgroup \(H\) of $W_n$ is a term \(Z_\alpha\) of the transfinite upper central series of \(W_n\). In particular \(H\) is saturated.
          \end{proposition}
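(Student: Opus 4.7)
The plan is to read off $H$ from the ordinal
\[
\alpha = \sup\{\tdeg(h) \mid h \in H,\ h \ne 1\},
\]
which is well-defined because the transfinite degrees of non-identity elements of $W_n$ are bounded above by $\omega^{n-1}+\dots+\omega$ (the degenerate case $H=\{1\}=Z_0$ will be dispatched separately). The key tool is Lemma~\ref{normalclosure}: for any $g\ne 1$ of transfinite degree $\beta$, the full $D$-normal closure of $D\langle g\rangle$ is exactly $Z_{\beta+1}$. Combined with the identification $Z_\gamma(W_n)=\{g\in W_n \mid \tdeg(g)<\gamma\}$ from \eqref{centers of W_n}, this is essentially all that is needed.

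First I establish the upper bound: every element of $H$ has transfinite degree at most $\alpha$, so $H\subseteq Z_{\alpha+1}$. For the matching lower bound I split into two cases. If $\alpha=\tdeg(g)$ is attained by some $g\in H$, then since $H$ is a full normal $D$-subgroup containing $D\langle g\rangle$, it must contain the full $D$-normal closure of $D\langle g\rangle$, which by Lemma~\ref{normalclosure} equals $Z_{\alpha+1}$; hence $H=Z_{\alpha+1}$. If instead the supremum is not attained — which forces $\alpha$ to be a limit ordinal — then for each $\beta<\alpha$ I can pick $h_\beta\in H$ with $\beta\le\tdeg(h_\beta)<\alpha$, and the same closure argument applied to $h_\beta$ gives $Z_{\beta+1}\subseteq H$. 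Taking the union over $\beta<\alpha$ yields $Z_\alpha\subseteq H$, while no element of $H$ attains $\alpha$ gives $H\subseteq Z_\alpha$; so $H=Z_\alpha$.

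The final assertion that $H$ is saturated follows immediately once $H$ has been identified with some $Z_\gamma$: by \eqref{centers of W_n} and \eqref{followingcenters} each $Z_\gamma$ is the $D$-span of the monic monomial elements $b_\delta\in\mathcal{B}$ with $\delta<\gamma$, hence saturated by the criterion in Remark~\ref{remorder}. I do not expect a real obstacle here, as the heavy lifting was already packaged in Lemma~\ref{normalclosure}; the only mild subtlety is observing that an unattained supremum of transfinite degrees is necessarily a limit ordinal, which is a standard fact about well-ordered sets.
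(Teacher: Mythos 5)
Your proof is correct and takes essentially the same route as the paper's: bound $H$ above via the transfinite-degree description of the centers, and below via Lemma~\ref{normalclosure} applied to elements of $H$ of large degree. Indeed, your explicit split according to whether the supremum of degrees is attained is slightly more careful than the paper's one-line argument, which elides the limit-ordinal case.
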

          \begin{proof}
          Let \(\alpha=\sup_{h\in H}(\tdeg(h))+1\). Note that \(H\le Z_{\alpha}\). By Lemma~\ref{normalclosure} the subgroup \(H\) contains \(\bigcup_{\beta \le \alpha}Z_\beta = Z_{\alpha}\).
%
%
%
          \end{proof}
          \medbreak
          
          \subsection{The correspondence}
         
          	With notation as in Equation~\ref{eq:base algebra}, we  now define a map connecting the structures of \(W_n\) and of  \(\Lie_n\). 
          	\begin{definition}
          		Let \(\phi\colon \mathcal{B}\cup \Set{1} \to \mathfrak{B} \cup \Set{0}\) be defined by setting \(\phi(x^\Lambda\Delta_k)=x^\Lambda\partial_k\) and \(\phi(1)=0\). We extend this map to  \(W_n\) as follows. Let \(1\ne g\in W_n\), then there exists a unique ordinal \(\alpha\) such that \(g\in Z_{\alpha+1}\setminus Z_\alpha\). By  Equation~\eqref{followingcenters}, 
          		 \(g= cx^\Lambda\Delta_k \cdot h\) for unique \(cx^\Lambda\Delta_k\in Z_{\alpha+1}\setminus Z_\alpha\) and \(h\in Z_\alpha\), then we  set \(\phi(g)=cx^\Lambda\partial_k\).
          	\end{definition}

          \begin{lemma}\label{lem:phisuicommutatori}
          	For  $g,h\in D\mathcal{B}$ we have \(\phi([g,h])=[\phi(g),\phi(h)]\).
          \end{lemma}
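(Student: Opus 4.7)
Here is a plan for the proof.

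The plan is to write $g = c_1 x^\Lambda \Delta_k$ and $h = c_2 x^\Theta \Delta_u$, with $c_1,c_2\in D$ and $x^\Lambda \Delta_k, x^\Theta \Delta_u \in \mathcal{B}$, and argue by case analysis on the relative order of $k$ and $u$. In each case I compute $[g,h]$ explicitly in $W_n$, identify its leading term, apply $\phi$, and compare with the Lie bracket $[\phi(g),\phi(h)]$ computed directly from the definition in Subsection~\ref{def:Lie Ring}.

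If $k=u$, then $g,h\in B_k$, which is abelian, so $[g,h]=1$ and $\phi([g,h])=0$. On the other hand, both $x^\Lambda$ and $x^\Theta$ lie in $\mathcal{P}(k-1)$ so they are independent of $x_k$, hence $\partial_k(x^\Lambda)=\partial_k(x^\Theta)=0$, giving $[\phi(g),\phi(h)]=0$ as well.

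The main case is $k>u$ (the case $k<u$ follows by the antisymmetry of both brackets). By Equation~\eqref{eqTaylor},
\begin{equation*}
[g,h] \;=\; \sum_{s=1}^\infty \frac{c_1 c_2^s}{s!}\,\frac{\partial^s x^{\Lambda}}{\partial x_u^s}\,(x^{\Theta})^s\,\Delta_k.
\end{equation*}
If $\lambda_u=0$ every summand vanishes, so $[g,h]=1$ and $\phi([g,h])=0$; simultaneously $\partial_u(x^\Lambda)=0$, so $[\phi(g),\phi(h)]=c_1c_2\,\partial_u(x^\Lambda)\,x^\Theta\,\partial_k=0$. Otherwise Lemma~\ref{lem:maxmon} gives
\begin{equation*}
\M([g,h]) \;=\; c_1 c_2\,\frac{\partial x^{\Lambda}}{\partial x_u}\,x^{\Theta}\,\Delta_k,
\end{equation*}
and by the very definition of $\phi$ (which sends an element to the $\phi$-image of its leading monomial, with $\Delta_k$ replaced by $\partial_k$),
\begin{equation*}
\phi([g,h]) \;=\; c_1 c_2\,\partial_u(x^\Lambda)\,x^{\Theta}\,\partial_k.
\end{equation*}
On the Lie side, the formula in Subsection~\ref{def:Lie Ring} for $u<k$ yields
\begin{equation*}
[\phi(g),\phi(h)] \;=\; [c_1 x^\Lambda\partial_k,\;c_2 x^\Theta\partial_u] \;=\; c_1 c_2\,\partial_u(x^\Lambda)\,x^\Theta\,\partial_k,
\end{equation*}
which matches. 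The case $k<u$ is handled identically after swapping roles, or by noting that $\phi$ obviously respects the sign flip in the commutator.

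The only subtlety, which is the main thing to be careful about, is the compatibility between the definition of $\phi$ (via the unique index $\alpha$ with $g\in Z_{\alpha+1}\setminus Z_\alpha$) and the naive operation of extracting the leading monomial: this is exactly the content of Equation~\eqref{followingcenters} together with Corollary~\ref{corcor}, which guarantee that the leading term of any non-identity element determines its ``$\phi$-image'' unambiguously. Once this is noted, the computation above is routine.
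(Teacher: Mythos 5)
Your proof is correct and follows essentially the same route as the paper: reduce to monomial elements, use Lemma~\ref{lem:maxmon} (via Equation~\eqref{eqTaylor}) to identify the leading term of $[g,h]$, and read off $\phi$ from that leading term to match the Lie bracket formula. Your extra care with the degenerate cases ($k=u$, $\lambda_u=0$, antisymmetry) and with the compatibility of $\phi$ with leading terms via Equation~\eqref{followingcenters} only makes explicit what the paper leaves implicit.
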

          \begin{proof}
          	It suffices to note that if $g=cx^\Lambda\Delta_k$ and $h=dx^\Theta\Delta_\ell$ with $k>\ell$ and $c,d\in D$, then by Lemma~\ref{lem:maxmon}
          	\begin{equation*}
          		\M([cx^\Lambda\Delta_k,dx^\Theta\Delta_\ell])=cd\frac{\partial x^\Lambda}{\partial x_\ell}x^\Theta\Delta_k.
          	\end{equation*}
          	Thus, $\phi([g,h])=cd\dfrac{\partial x^\Lambda}{\partial x_\ell}x^\Theta\partial_k=[cx^\Lambda\partial_k,dx^\Theta\partial_\ell]$.
          \end{proof}

          We recall the definition of \emph{homogeneous}\fxnote{uno solo dei due homogeneous in corsivo} subring of $\Lie_n$ introduced in\cite{char0algebra}.
          \begin{definition}
          	A  Lie  subring  \(\mathfrak{H}\)  of  \(\Lie_{n}\)  is  said  to  be
          	homogeneous if it  is the free \(D\)-module  spanned by some
          	subset of \(\mathfrak{B}\).
          \end{definition}
          Arguing as in the proof of Theorem~\ref{omogeneità}, we can prove the following result, also proved in \cite[Theorem~1.2]{char0algebra}.
          \begin{theorem}\label{omogeneitàlie}
          	The idealizer in $\Lie_n$ of a homogeneous subring is also homogeneous.
          \end{theorem}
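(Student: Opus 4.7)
My plan is to mirror the argument of Theorem~\ref{omogeneità}, with bilinearity of the Lie bracket taking the role of the group commutator identities that drive Lemma~\ref{prophomo1}. The idealizer \(\mathcal{I}\) of \(\mathfrak{H}\) in \(\Lie_n\) is automatically a \(D\)-submodule, and to prove it is homogeneous I would establish the Lie-ring analogue of Lemma~\ref{prophomo1}: if \(f \in \mathcal{I}\) has leading term \(\M(f) = c_\alpha b_\alpha\) with \(0 \ne c_\alpha \in D\) and \(b_\alpha \in \mathfrak{B}\), then \(b_\alpha \in \mathcal{I}\). Granted this, the element \(f - c_\alpha b_\alpha\) again lies in \(\mathcal{I}\) but has strictly smaller \(\mathfrak{B}\)-support; a straightforward induction on support size then shows every basis monomial appearing in \(f\) belongs to \(\mathcal{I}\), so \(\mathcal{I}\) is \(D\)-spanned by \(\mathcal{I} \cap \mathfrak{B}\), proving homogeneity.

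To prove the main claim I would fix an arbitrary \(y = x^\Theta \partial_u \in \mathfrak{H} \cap \mathfrak{B}\) and decompose \(f = c_\alpha b_\alpha + g\) with \(\tdeg(g) < \alpha\), so that bilinearity gives \([f,y] = c_\alpha [b_\alpha, y] + [g,y] \in \mathfrak{H}\). The decisive simplification over the group case (where Equation~\eqref{eqTaylor} produces a Taylor-type sum) is that the bracket of two elements of \(\mathfrak{B}\) is either zero or a \(D\)-scalar multiple of a single basis element of \(\mathfrak{B}\). Writing \(g = \sum_{\beta < \alpha} c_\beta b_\beta\), I would expand \([f,y]\) as a literal sum of basis monomials and then invoke the following monotonicity statement: whenever \(\tdeg(b) > \tdeg(b')\) and both \([b, y]\) and \([b', y]\) are non-zero, then \(\tdeg([b, y]) > \tdeg([b', y])\), and the resulting basis elements are pairwise distinct as \(b\) ranges over the support of \(f\). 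This forces, when \([b_\alpha, y] \ne 0\), that it is (up to a non-zero \(D\)-scalar) the basis monomial of maximum \(\tdeg\) in the \(\mathfrak{B}\)-expansion of \([f,y]\), so homogeneity of \(\mathfrak{H}\) places it in \(\mathfrak{H}\). When \([b_\alpha, y] = 0\) the inclusion is trivial. Since \(\mathfrak{H}\) is \(D\)-spanned by \(\mathfrak{H} \cap \mathfrak{B}\), \(D\)-linearity of the bracket promotes this to \([b_\alpha, h] \in \mathfrak{H}\) for every \(h \in \mathfrak{H}\), so \(b_\alpha \in \mathcal{I}\).

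The hard part will be the monotonicity verification: it requires a case analysis on the relative positions of \(u\) and the \(\partial\)-indices of \(b, b'\). I would exploit the formula \(\tdeg(x^\Theta \partial_j) = \sum_{i=1}^{n-j} \omega^{n-i} + \tdeg(x^\Theta)\) together with the estimate \(\tdeg(x^\Lambda) < \omega^{k-1}\) valid whenever \(x^\Lambda \partial_k \in \mathfrak{B}\): the sum \(\sum_{i=1}^{n-j} \omega^{n-i}\) depends only on the \(\partial\)-index and strictly dominates any polynomial contribution, so each sub-case reduces to a routine ordinal comparison. Distinctness of the output basis elements should follow from the fact that the pair \((\Lambda, k)\) is recoverable from \([x^\Lambda \partial_k, y]\) together with the sign of \(u - k\), ruling out any cancellation between the contributions of distinct monomials in the support of \(f\).
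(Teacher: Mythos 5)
Your argument is correct and essentially coincides with the paper's: the paper proves this statement by arguing exactly as in Theorem~\ref{omogeneità}, i.e., the leading-term argument of Lemma~\ref{prophomo1}, which is precisely your route, together with the (correct) observation that the Lie case is even simpler because the bracket of two elements of \(\mathfrak{B}\) is a scalar multiple of a single basis monomial, so no Taylor-tail induction is needed. Your monotonicity claim does hold after the routine case check (when the resulting \(\partial\)-indices agree it reduces to coefficientwise comparison of exponent vectors, and the case of two indices \(k<k'\) both below \(u\) uses that \(x_{k'}\) does not occur in either monomial), so the sketch closes as stated.
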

                Let \(H\le W_n\), we 
          denote by \(H^\phi\) the  \(D\)-subring  of \(\Lie_{n}\) generated by \(\phi(H)\).            \begin{remark}
          	If \(H\le W_n\) is a saturated subgroup, then \(H^\phi\) is a homogeneous subring of \(\Lie_{n}\). 
          \end{remark}
          
          The following statement is a trivial consequence of  Lemma~\ref{lem:phisuicommutatori} and the previous remark. 
          \begin{lemma}
          The $\alpha$-th center \(\mathfrak{Z}_\alpha\) of \(\Lie_n\)is an homogeneous subring of $\Lie_n$. Moreover \[\mathfrak{Z}_\alpha=\left<cx^\Lambda\partial_k\mid c\in D\text{ \textrm{and} } \tdeg(x^\Lambda\partial_k)<\alpha \right>=Z_{\alpha}^\phi.\]
          In particular \(\Lie_n\) is a transfinite hypercentral Lie ring over \(D\).
          \end{lemma}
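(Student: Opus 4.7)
The plan is a transfinite induction on $\alpha$, mirroring the analysis of $Z_\alpha(W_n)$ carried out in Section~3 and exploiting the fact that the product on $\Lie_n$ is the algebraic counterpart of the leading-term commutator formula of $W_n$, as made precise by Lemma~\ref{lem:maxmon} and Lemma~\ref{lem:phisuicommutatori}. I would first introduce the auxiliary $D$-submodule
\[
\mathfrak{Z}'_\alpha := \left\langle cx^\Lambda\partial_k \,\middle|\, c\in D,\ \tdeg(x^\Lambda\partial_k)<\alpha \right\rangle_D,
\]
which is homogeneous by construction. Since the bracket of two basis elements of $\mathfrak{B}$ is either zero or a constant multiple of a single basis element of strictly smaller transfinite degree, it follows at once that $[\mathfrak{Z}'_\alpha,\Lie_n]\subseteq \mathfrak{Z}'_\alpha$, so $\mathfrak{Z}'_\alpha$ is a homogeneous Lie ideal of $\Lie_n$.

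The identification $\mathfrak{Z}'_\alpha = Z_\alpha^\phi$ is then immediate from the definition of $\phi$ together with Equation~\eqref{centers of W_n}: for any $g\in Z_\alpha(W_n)=\{h\in W_n \mid \tdeg(h)<\alpha\}$, the image $\phi(g)$ is, by construction, a scalar multiple of a basis monomial of transfinite degree strictly less than $\alpha$; conversely, every such monomial arises as $\phi(cx^\Lambda\Delta_k)$. Hence the whole statement reduces to proving $\mathfrak{Z}_\alpha = \mathfrak{Z}'_\alpha$, whence homogeneity of $\mathfrak{Z}_\alpha$ will also follow.

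I would carry out this equality by transfinite induction on $\alpha$, with the cases $\alpha=0$ and $\alpha$ limit being trivial. For the successor step, assuming $\mathfrak{Z}_\alpha = \mathfrak{Z}'_\alpha$, the inclusion $\mathfrak{Z}'_{\alpha+1}\subseteq \mathfrak{Z}_{\alpha+1}$ follows because every generator of $\mathfrak{Z}'_{\alpha+1}$ has transfinite degree at most $\alpha$ and therefore brackets into $\mathfrak{Z}'_\alpha=\mathfrak{Z}_\alpha$. For the converse, suppose that $f=\sum_\beta c_\beta b^*_\beta \in \mathfrak{Z}_{\alpha+1}$ with $b^*_\beta\in\mathfrak{B}$, and assume for contradiction that some $c_\gamma\ne 0$ with $\tdeg(b^*_\gamma)\ge\alpha+1$, choosing $\gamma$ maximal. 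Mimicking the construction of Lemma~\ref{lem:ZaphainBn}, whose argument depends only on the commutator formula shared by $W_n$ and $\Lie_n$, one exhibits $h\in\Lie_n$ such that $[b^*_\gamma,h]$ has transfinite degree at least $\alpha$, while every $[b^*_\beta,h]$ with $\beta<\gamma$ has strictly smaller transfinite degree. Because the bracket in $\Lie_n$ sends each basis monomial either to zero or to a scalar multiple of a single basis monomial of uniquely determined transfinite degree, no cancellation of the leading term can occur in $[f,h]$; thus $[f,h]\notin\mathfrak{Z}'_\alpha=\mathfrak{Z}_\alpha$, contradicting $f\in\mathfrak{Z}_{\alpha+1}$. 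Transfinite hypercentrality with class $\omega^{n-1}+\dots+\omega+1$ then follows by transporting Theorem~\ref{thm:hypercentralpolynomials} through $\phi$.

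The main obstacle I expect is precisely the converse inclusion in the successor step: constructing the $\Lie_n$-counterpart of Lemma~\ref{lem:ZaphainBn} and ruling out cancellations among the brackets $[b^*_\beta,h]$. Both points are handled by the total order on $\mathfrak{B}$ induced by $\tdeg$ together with the rigid ``one monomial in, one monomial out'' nature of the Lie bracket in $\Lie_n$, which ensures that the $\tdeg$-ordering is strictly preserved and that a well-defined leading term of $[f,h]$ can be isolated.
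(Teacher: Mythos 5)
Your proposal is correct in substance, but it takes a genuinely different route from the paper. The paper treats this lemma as an immediate transport of structure: the terms $Z_\alpha(W_n)$ have already been computed explicitly on the group side (Equations~\eqref{centers of W_n} and~\eqref{followingcenters}), they are saturated subgroups, and $\phi$ sends saturated subgroups to homogeneous subrings while matching commutators of monomial elements with Lie brackets (Lemma~\ref{lem:phisuicommutatori}); the stated description of $\mathfrak{Z}_\alpha$ is then just the image under $\phi$ of the group computation. You instead re-derive the transfinite upper central series of $\Lie_n$ intrinsically, by transfinite induction, building Lie analogues of Lemma~\ref{lemmacentri} and Lemma~\ref{lem:ZaphainBn}; this repeats part of the Section~3 analysis but has the merit of being self-contained on the Lie side and of making explicit what the paper's one-line deduction leaves implicit. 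Two points in your sketch need care. First, Lemma~\ref{lem:ZaphainBn} only concerns monomials in the top component $\partial_n$ and ordinals below $\omega^{n-1}$, so ``mimicking'' it for an arbitrary $b^*_\gamma=x^\Lambda\partial_k$ requires exhibiting $h$ directly (if $\ell$ is the least index with $\lambda_\ell\neq0$, take $h=x_{\ell-1}^m\partial_\ell$ for $m$ large, or $h=\partial_1$ when $\ell=1$; if $\Lambda=0$ and $k<n$, take $h=x_k^m\partial_{k+1}$) — easy, but it should be said, since on the group side the paper needed separate lemmas for elements outside $B_n$. Second, your claim that every $[b^*_\beta,h]$ with $\beta<\gamma$ has strictly smaller transfinite degree than $[b^*_\gamma,h]$ is neither quite accurate in general nor what you actually need; the correct and sufficient statement is that, for a fixed monomial $h=x^\Theta\partial_j$, the map sending a basis monomial $b$ to the basis monomial supporting $[b,h]$ is injective where nonzero: outputs in a component $k\neq j$ arise only from inputs in component $k$ via $\Lambda\mapsto\Lambda-e_j+\Theta$, and outputs in component $j$ arise from inputs $x^\Lambda\partial_{k'}$ with $k'<j$ via $(\Lambda,k')\mapsto\Lambda+\Theta-e_{k'}$, injective because $\Lambda\in\mathcal{P}(k'-1)$. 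Hence the coefficient of $[b^*_\gamma,h]$ in $[f,h]$ is a nonzero multiple of $c_\gamma$ (characteristic $0$, $D$ a domain), no cancellation occurs, and your contradiction goes through.
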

            
           Noting that the map \(\phi\) yields a bijection between the set of saturated  normal subgroups of \(W_n\) and the set of homogeneous Lie ideals of \(\Lie_n\), we have the following completely analogous result to Proposition~\ref{prop:saturatednormal}.  
           \begin{proposition}
           	Every homogeneous ideal of \(\Lie_n\) is a term \(\mathfrak{Z}_{\alpha}\) of the transfinite upper central series of \(\Lie_n\). 
           \end{proposition}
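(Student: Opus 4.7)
The plan is to transfer Proposition~\ref{prop:saturatednormal} across the correspondence $\phi$, so that almost no new work is required. Given a homogeneous ideal $\mathfrak{H}$ of $\Lie_n$, the bijection asserted just before the statement produces a unique saturated normal $D$-subgroup $H\le W_n$ with $H^\phi=\mathfrak{H}$. Since every saturated subgroup is full, Proposition~\ref{prop:saturatednormal} applies and yields $H=Z_\alpha$ for some ordinal $\alpha$. Combining this with the identity $Z_\alpha^\phi=\mathfrak{Z}_\alpha$ from the preceding lemma gives
\[
\mathfrak{H}=H^\phi=Z_\alpha^\phi=\mathfrak{Z}_\alpha,
\]
as required.

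Alternatively, one can argue directly within $\Lie_n$, mirroring Lemma~\ref{normalclosure}. The key step is to show, by transfinite induction on $\beta=\tdeg(g)$, that the smallest homogeneous ideal $\mathfrak{I}(g)$ of $\Lie_n$ containing a non-zero element $g$ equals $\mathfrak{Z}_{\beta+1}$. The induction step picks off $\M(g)=cx^\Lambda\partial_k$, which lies in $\mathfrak{I}(g)$ because $\mathfrak{I}(g)$ is $D$-spanned by a subset of $\mathfrak{B}$; one then brackets with $\partial_{k+1}$ when $\Lambda=0$, or with $x_{\ell-1}^{s+1}\partial_\ell$ (where $\ell=\min\{i:\lambda_i\ne 0\}$, using the convention of Lemma~\ref{normalclosure}) otherwise, producing elements of strictly smaller transfinite degree whose supremum is $\beta$. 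Invoking the inductive hypothesis on each such element yields $\mathfrak{Z}_\beta\subseteq\mathfrak{I}(g)$, and the presence of the leading basis element $x^\Lambda\partial_k$ itself closes the gap to give $\mathfrak{Z}_{\beta+1}\subseteq\mathfrak{I}(g)$. Once this lemma is in hand, setting $\alpha=\sup_{h\in\mathfrak{H}}\tdeg(h)+1$ gives $\mathfrak{H}\subseteq\mathfrak{Z}_\alpha$ trivially, and applying the lemma elementwise shows $\mathfrak{Z}_\alpha=\bigcup_{\beta<\alpha}\mathfrak{Z}_{\beta+1}\subseteq\mathfrak{H}$.

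The main obstacle is really only bookkeeping: one must verify that the bracket identities in $\Lie_n$ produce the same sequence of descending transfinite degrees as the commutator identities do in the group setting. Lemma~\ref{lem:phisuicommutatori} already encodes that $\phi$ carries leading terms of commutators to brackets of leading terms, so the translation of Lemma~\ref{normalclosure} is mechanical. This is also why the two approaches are genuinely equivalent: the correspondence route is a one-line consequence of the bijection, while the direct route simply re-runs the proof of Proposition~\ref{prop:saturatednormal} with $\partial_k$ in place of $\Delta_k$ and the bracket in place of the commutator.
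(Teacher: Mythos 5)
Your first route is precisely the paper's argument: the paper proves this proposition simply by invoking the asserted $\phi$-bijection between saturated normal subgroups of $W_n$ and homogeneous ideals of $\Lie_n$, then applying Proposition~\ref{prop:saturatednormal} (every normal full $D$-subgroup, in particular every saturated normal subgroup, is some $Z_\alpha$) together with the identity $\mathfrak{Z}_\alpha=Z_\alpha^\phi$ from the preceding lemma. Your alternative direct argument inside $\Lie_n$, re-running Lemma~\ref{normalclosure} with $\partial_k$ in place of $\Delta_k$, is a sound optional addition but is not needed and is not the route the paper takes.
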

            In general the map \(\phi\) transforms normalizers in idealizers when restricted to saturated subgroups. The proof is a straightforward consequence of Theorem~\ref{omogeneità}, Theorem~\ref{omogeneitàlie} and Lemma~\ref{lem:phisuicommutatori}. 
          \begin{proposition}\label{prop:normalizeridealizer}
          	 Let \(H\le W_n\) be a saturated subgroup of \(W_n\). An element \(g \in W_n\) lies in the normalizer \(N_{W_n}(H)\) of \(H\) in \(W_n\) if and only if \( \phi(g)\) lies in the idealizer \(I_{\Lie_{n}}(H^\phi)\) of \(H^\phi\) in \(\Lie_{n}\).
          \end{proposition}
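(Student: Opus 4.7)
The plan is to push the claim down to the level of monic monomials, where $\phi$ restricts to the obvious bijection $\mathcal{B}\to\mathfrak{B}$. By Theorem~\ref{omogeneità} the normalizer $N_{W_n}(H)$ is saturated, hence determined as a $D$-subgroup by $N_{W_n}(H)\cap\mathcal{B}$; by Theorem~\ref{omogeneitàlie} the idealizer $I_{\Lie_n}(H^\phi)$ is homogeneous, hence freely $D$-spanned by $I_{\Lie_n}(H^\phi)\cap\mathfrak{B}$. Recalling that $\phi(g)$ records precisely the leading monomial of $g$, the full statement reduces to the single monomial-level equivalence: for every $b\in\mathcal{B}$, $b\in N_{W_n}(H)$ if and only if $\phi(b)\in I_{\Lie_n}(H^\phi)$.

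For the forward direction, fix $b\in N_{W_n}(H)\cap\mathcal{B}$ and let $h'\in H\cap\mathcal{B}$. Then $[b,h']\in H$, and by saturation of $H$ its leading monomial lies in $H\cap\mathcal{B}$. Lemma~\ref{lem:phisuicommutatori} identifies the $\phi$-image of this leading monomial with $[\phi(b),\phi(h')]$, placing $[\phi(b),\phi(h')]\in H^\phi$. Bilinearity of the Lie bracket together with Jacobi, applied to iterated brackets generating $H^\phi$ as a $D$-subring, then propagates this to $[\phi(b),y]\in H^\phi$ for every $y\in H^\phi$, so $\phi(b)\in I_{\Lie_n}(H^\phi)$.

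For the reverse direction, fix $b\in\mathcal{B}$ with $\phi(b)\in I_{\Lie_n}(H^\phi)$. For each $h'\in H\cap\mathcal{B}$, Lemma~\ref{lem:phisuicommutatori} gives $\phi([b,h'])=[\phi(b),\phi(h')]\in H^\phi$, which is the statement that the leading monomial of $[b,h']$ has its $\phi$-image in $H^\phi$. To promote this to the stronger assertion $[b,h']\in H$ (and hence $b\in N_{W_n}(H)$), I would replicate the leading-term extraction technique from the proof of Lemma~\ref{prophomo1}: expand $[b,h']$ via Equation~\eqref{eqTaylor} as an ordered product of monomials $d_1\succ d_2\succ\dots\succ d_i$, draw $d_1$ into $H$ via the leading-term argument, and inductively recover each $d_{r+1}$ as the leading monomial of a further commutator $[d_r,\text{(suitable element)}]$ in $H$, saturating all terms into $H$.

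This final step is the main obstacle: because $\phi$ only records leading monomials, the reverse direction has to recover the entire commutator $[b,h']$ from information about its leading term alone. It is exactly here that the iterative saturation argument of Lemma~\ref{prophomo1} is indispensable, and where the proof transcends a purely formal combination of Theorem~\ref{omogeneità}, Theorem~\ref{omogeneitàlie} and Lemma~\ref{lem:phisuicommutatori}.
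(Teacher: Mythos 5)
The paper offers no written argument for this proposition beyond declaring it a straightforward consequence of Theorem~\ref{omogeneità}, Theorem~\ref{omogeneitàlie} and Lemma~\ref{lem:phisuicommutatori}, so your fleshed-out version is in the same spirit as the intended proof, and your forward direction is correct. The problems are in the reverse direction. The first is the reduction itself: since $\phi(g)$ records only the leading monomial of $g$, the hypothesis $\phi(g)\in I_{\Lie_{n}}(H^\phi)$ places no constraint on the lower-order components of $g$, so the ``if'' implication does not reduce to the monomial-level statement --- and indeed, taken literally for arbitrary $g$, it is false. For $n=2$ let $H=D\langle\Delta_1\rangle$ (saturated) and let $g$ be the element with $B_2$-component $x_1\Delta_2$ and $B_1$-component $\Delta_1$. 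Since $\tdeg(\Delta_1)=\omega>\tdeg(x_1\Delta_2)=1$ we get $\phi(g)=\partial_1$, which lies in $I_{\Lie_{2}}(D\partial_1)=D\partial_2\oplus D\partial_1$; yet $[g,\Delta_1]=\Delta_2\notin H$, so $g\notin N_{W_2}(H)$. The equivalence you actually prove (and the one the rest of the paper uses) is the one for monomial elements $g\in D\mathcal{B}$, equivalently the subgroup-level identity $N_{W_n}(H)\cap\mathcal{B}=\phi^{-1}\bigl(I_{\Lie_{n}}(H^\phi)\cap\mathfrak{B}\bigr)$; your write-up should state this restriction rather than claim the general element-wise reduction.

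The second problem is that, even at the monomial level, your induction is circular as written: you recover $d_{r+1}$ as the leading monomial of a commutator $[d_r,b]$ ``in $H$'', but $[d_r,b]\in H$ is precisely the normalizing property of $b$ that you are trying to establish, so you cannot import that step verbatim from Lemma~\ref{prophomo1} (where $f\in N_{W_n}(H)$ is a hypothesis). The repair is to stay on the Lie side one more time at each step: $d_r\in H$ is a monomial element, so $\phi(d_r)\in H^\phi$, hence $\phi([d_r,b])=[\phi(d_r),\phi(b)]\in H^\phi$ by Lemma~\ref{lem:phisuicommutatori} and the hypothesis $\phi(b)\in I_{\Lie_{n}}(H^\phi)$; homogeneity of $H^\phi$, saturation of $H$, and Lemma~\ref{lem:maxmon} then give $\M([d_r,b])=(r+1)d_{r+1}\in H$, and the descending induction closes, yielding $[b,h']=d_1\cdots d_i\in H$. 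With the restriction to monomial elements and this corrected inductive step, your argument is complete.
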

%
          
          \begin{corollary}\label{cor:ranks}
          	The difference \(|N_{W_n}(H)\cap \mathcal{B}| - |H\cap \mathcal{B}|\) is equal to the rank of \(I_{\Lie_{n}}(H^\phi)/H^\phi\) as a free $D$-module.
          \end{corollary}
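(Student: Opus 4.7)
The plan is to assemble the saturation/homogeneity machinery together with Proposition~\ref{prop:normalizeridealizer} and reduce the statement to a counting argument for basis elements. Since \(H\) is saturated, Theorem~\ref{omogeneità} ensures that \(N_{W_n}(H)\) is also saturated, so by Remark~\ref{remorder} both \(H\) and \(N_{W_n}(H)\) are \(D\)-generated by their intersections with \(\mathcal{B}\). On the Lie ring side, the remark preceding Proposition~\ref{prop:normalizeridealizer} gives that \(H^\phi\) is homogeneous, and Theorem~\ref{omogeneitàlie} gives the same for \(I_{\Lie_n}(H^\phi)\). Consequently \(H^\phi\) and \(I_{\Lie_n}(H^\phi)\) are free \(D\)-modules with bases \(H^\phi\cap\mathfrak{B}\) and \(I_{\Lie_n}(H^\phi)\cap\mathfrak{B}\) respectively.

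Next I would exploit the fact that the restriction \(\phi|_{\mathcal{B}}\colon \mathcal{B}\to \mathfrak{B}\) is a bijection, sending \(x^\Lambda\Delta_k\) to \(x^\Lambda\partial_k\). For a monic monomial element \(b\in\mathcal{B}\), Proposition~\ref{prop:normalizeridealizer} says that \(b\in N_{W_n}(H)\) if and only if \(\phi(b)\in I_{\Lie_n}(H^\phi)\); and by construction \(b\in H\) if and only if \(\phi(b)\in H^\phi\) (the latter being homogeneous with basis \(\phi(H\cap\mathcal{B})\)). Hence \(\phi\) induces bijections
\[
H\cap\mathcal{B}\longleftrightarrow H^\phi\cap\mathfrak{B},\qquad N_{W_n}(H)\cap\mathcal{B}\longleftrightarrow I_{\Lie_n}(H^\phi)\cap\mathfrak{B}.
\]

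Finally, since \(H^\phi\subseteq I_{\Lie_n}(H^\phi)\) (the idealizer contains the subring) and both are free \(D\)-modules on homogeneous bases, the quotient \(I_{\Lie_n}(H^\phi)/H^\phi\) is itself a free \(D\)-module whose rank equals
\[
|I_{\Lie_n}(H^\phi)\cap\mathfrak{B}|-|H^\phi\cap\mathfrak{B}|.
\]
Combining this with the bijections of the previous step yields exactly \(|N_{W_n}(H)\cap\mathcal{B}|-|H\cap\mathcal{B}|\), as claimed. The argument is essentially a bookkeeping of bases, and I do not anticipate a serious obstacle; the only point demanding care is to make sure that the containment \(H\subseteq N_{W_n}(H)\) transfers under \(\phi\) to \(H^\phi\subseteq I_{\Lie_n}(H^\phi)\) at the level of homogeneous generators, which is immediate from Proposition~\ref{prop:normalizeridealizer} applied to the elements of \(H\cap\mathcal{B}\).
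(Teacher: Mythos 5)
Your proposal is correct and follows exactly the route the paper intends: the corollary is stated without proof as an immediate consequence of Proposition~\ref{prop:normalizeridealizer} together with the saturation/homogeneity facts (Theorem~\ref{omogeneità}, Theorem~\ref{omogeneitàlie}, the remark that \(H^\phi\) is homogeneous with basis \(\phi(H\cap\mathcal{B})\)), and your bookkeeping of basis elements via the bijection \(\phi|_{\mathcal{B}}\) is precisely that argument. The one point you rightly flag -- that \(H^\phi\cap\mathfrak{B}=\phi(H\cap\mathcal{B})\) so that membership transfers at the level of homogeneous generators -- is exactly what the paper's preceding remark and Lemma~\ref{lem:phisuicommutatori} guarantee, so there is no gap.
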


          \section{A sequence of normalizers}
Let $T=\langle \Delta_1,\dots,\Delta_n\rangle$  be the regular canonical  abelian subgroups  of $W_n$  generated by  the unit constant function  elements. We deal with the normalizer chain in $W_n$ starting from $T$, in other words with the sequence $\Set{\mathbf{N}_i}_{i\geq -1}$ defined as follows
        \begin{equation*} \mathbf{N}_i=
        \begin{cases}
        	T & i=-1,\\
        	N_{W_n}(T) & i=0,\\
        	N_{W_n}(\mathbf{N}_{i-1}) & i \geq 1.
        \end{cases}
        \end{equation*}

By way of Proposition~\ref{prop:normalizeridealizer}, the previous normalizer chain can be described using  the  results obtained  in \cite{char0algebra} for the case of idealizers. For the convinience of the reader, we give a brief summary.

 \begin{definition}\label{defri}
	Let $i\geq 1$ be an integer  and let \(1\le r_i \le n-1\) be
	such that \(i\equiv r_i \bmod (n-1)\).  We set 
	$$h_i\defeq  \biggr\lfloor \frac{i-1}{n-1}\biggl\rfloor  +1.$$ 
	The functions  \emph{weight-degree} and
	\emph{$i$-th level-function} on  power monomials elements are defined as 
	\begin{equation*}
		\wdd(cx^\Lambda\Delta_k)=\wt(\Lambda)-\deg(x^\Lambda)+n-k,
	\end{equation*}
	and
	\begin{equation*}
		\lev_i(cx^\Lambda\Delta_k)=h_i\wdd(x^\Lambda\Delta_k)+\deg(x^\Lambda)-1.
	\end{equation*}
\end{definition}
	Let $i\geq -1$, we define the set
	$$\mathcal{N}_{i}=\left\{x^\Lambda\Delta_k\mid\ \lev_j(x^\Lambda\Delta_k)=j\ \text{for some } j\le i\right\}$$
	and we call $\mathcal{L}_i=\mathcal{N}_{i}\setminus \mathcal{N}_{i-1}.$
	
	The following result is \cite[Theorem~3.4]{char0algebra}, read via \(\phi^{-1}\).
\begin{theorem}
The   first  normalizer   $\mathbf{N}_{W_n}(T)$  of   $T$
coincides with $\langle \mathcal{N}_0\rangle$, and in general, \[\mathbf{N}_i=N_{W_n}(\langle \mathcal{N}_{i-1}\rangle )=\langle \mathcal{N}_i\rangle.\]
\end{theorem}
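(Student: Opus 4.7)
The plan is to transport the claim to the Lie ring \(\Lie_{n}\) via the correspondence \(\phi\), where the analogous statement for the iterated idealizer chain of the canonical abelian subring \(\langle \partial_1,\dots,\partial_n\rangle\) of \(\Lie_n\) is already proved in \cite[Theorem~3.4]{char0algebra}.

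I would argue by induction on \(i\ge -1\). For the base case, observe that \(T=\langle \Delta_1,\dots, \Delta_n\rangle\) is \(D\)-generated by elements of \(\mathcal{B}\), so it is a saturated \(D\)-subgroup of \(W_n\), and its image \(T^\phi=\langle \partial_1,\dots,\partial_n\rangle\) is precisely the canonical abelian homogeneous subring of \(\Lie_n\). Theorem~\ref{omogeneità} guarantees that the normalizer of a saturated subgroup in \(W_n\) is again saturated, so inductively each \(\mathbf{N}_i\) is saturated; by Proposition~\ref{prop:normalizeridealizer} we then have \(\mathbf{N}_i^\phi = I_{\Lie_{n}}(\mathbf{N}_{i-1}^\phi)\), where \(I\) denotes the idealizer in \(\Lie_n\).

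By the inductive hypothesis \(\mathbf{N}_{i-1}=\langle \mathcal{N}_{i-1}\rangle\), so \(\mathbf{N}_{i-1}^\phi\) is the homogeneous subring with \(D\)-basis \(\{x^\Lambda\partial_k\mid x^\Lambda\Delta_k\in \mathcal{N}_{i-1}\}\). Reading \cite[Theorem~3.4]{char0algebra} through the bijection \(x^\Lambda\Delta_k\leftrightarrow x^\Lambda\partial_k\) between \(\mathcal{B}\) and \(\mathfrak{B}\), it asserts that the idealizer of this homogeneous subring is spanned by \(\{x^\Lambda\partial_k\mid x^\Lambda\Delta_k\in \mathcal{N}_i\}\), i.e.\ \(\mathbf{N}_i^\phi=\langle \mathcal{N}_i\rangle^\phi\). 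Both \(\mathbf{N}_i\) and \(\langle \mathcal{N}_i\rangle\) are saturated subgroups of \(W_n\) whose images under \(\phi\) coincide, so the bijection between saturated subgroups and homogeneous subrings (Remark~\ref{remorder} together with its Lie-ring counterpart) forces \(\mathbf{N}_i=\langle \mathcal{N}_i\rangle\). The remaining equality \(\mathbf{N}_i=N_{W_n}(\langle \mathcal{N}_{i-1}\rangle)\) is then immediate from the inductive hypothesis \(\mathbf{N}_{i-1}=\langle \mathcal{N}_{i-1}\rangle\). The case \(i=0\) specialises to \(N_{W_n}(T)=\langle \mathcal{N}_0\rangle\).

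The main obstacle is to make precise that \(\phi\), although not a group homomorphism, induces an inclusion-preserving bijection between saturated subgroups of \(W_n\) and homogeneous subrings of \(\Lie_{n}\); this is what allows us to pull back the identity on the Lie-ring side to an identity on the group side. The point is that, by Remark~\ref{remorder}, a saturated subgroup of \(W_n\) is determined by its monic monomial elements in \(\mathcal{B}\), and likewise a homogeneous subring of \(\Lie_{n}\) is determined by its intersection with \(\mathfrak{B}\); the restriction of \(\phi\) to \(\mathcal{B}\cup\{1\}\) is a bijection onto \(\mathfrak{B}\cup\{0\}\) which, by Lemma~\ref{lem:phisuicommutatori}, is compatible with the bracket on leading terms, so the two lattices are isomorphic via \(\phi\). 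Once this is granted, the normalizer-idealizer dictionary of Proposition~\ref{prop:normalizeridealizer} finishes the argument.
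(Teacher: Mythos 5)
Your proposal is correct and follows essentially the same route as the paper: the paper states this theorem as \cite[Theorem~3.4]{char0algebra} ``read via \(\phi^{-1}\)'', i.e.\ exactly the transport through the saturated-subgroup/homogeneous-subring dictionary and the normalizer--idealizer correspondence of Proposition~\ref{prop:normalizeridealizer} that you use. Your write-up merely makes explicit the induction on \(i\) and the saturation of each \(\mathbf{N}_i\) via Theorem~\ref{omogeneità}, which the paper leaves implicit.
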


The map \(\phi\) allows us to translate even more results of~\cite{char0algebra} to the group \(W_n\).
 Let
\(\{a_i\}_{i=0}^\infty\)  denote   the  sequence  of  the   number  of
partitions  of  \(i\). Let  \(b_i=\sum_{j=0}^ia_j\)  be
\(i\)-th partial  sum of $\{a_i\}$ and  \(c_i=\sum_{j=0}^ib_j\) be the
\(i\)-th  partial  sum  of  $\{b_i\}$.  The  initial  terms  of  these
sequences      can      be      found      in      \cite{OEIS}      at
\href{https://oeis.org/A085360}{A085360} .				
Beyond a threshold value that
depends  quadratically on  $n$,  the sequence  $\{|\mathcal{L}_{i}|\}$
exhibits  periodic behavior and it is related to the
sequence   $\{c_i\}$.   For   a   proof  of   the   following   result
see~\cite[Corollary 2.12]{char0algebra}.
\begin{proposition}\label{prop:ranks}
  If \(i > (n-4)(n-1)\) and \(1\le k \le n\), then
  \begin{align*}
    |\mathcal{L}_{i}\cap \mathcal{B}_k|&=b_{r_i+k-n-1}\\
    \intertext{and}
    |\mathcal{L}_{i}|&= c_{r_i-1},
  \end{align*}
  where \(\mathcal{B}_k=\mathcal{B}\cap B_k\) and the value $r_i$ is as in Definition~\ref{defri}.
\end{proposition}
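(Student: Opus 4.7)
The plan is to transfer the claim from \(\Lie_n\) to \(W_n\) via the correspondence \(\phi\) and then invoke \cite[Corollary~2.12]{char0algebra}. The restriction of \(\phi\) to \(\mathcal{B}\) is a bijection onto \(\mathfrak{B}\), sending \(x^\Lambda\Delta_k\) to \(x^\Lambda\partial_k\); in particular it carries \(\mathcal{B}_k\) onto the corresponding Lie-theoretic component. Since the functions \(\wdd\) and \(\lev_j\) depend only on the data \((\Lambda,k)\), the subsets \(\mathcal{N}_i\cap\mathcal{B}\) and \(\mathcal{L}_i\cap\mathcal{B}_k\) correspond bijectively to their Lie-theoretic analogues. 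Combined with Proposition~\ref{prop:normalizeridealizer}, which identifies the normalizer chain \(\{\mathbf{N}_i\}\) of \(T\) in \(W_n\) with the idealizer chain of the abelian Lie subring \(\langle\partial_1,\dots,\partial_n\rangle\) in \(\Lie_n\), this reduces both cardinality assertions to the corresponding Lie-theoretic counts.

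The combinatorial content of the cited reference then runs as follows. Unfolding the definitions of \(\wdd\) and \(\lev_i\), the condition \(\lev_i(x^\Lambda\Delta_k)=i\) becomes the linear equation
\[
h_i\bigl(\wt(\Lambda)-\deg(x^\Lambda)+n-k\bigr)+\deg(x^\Lambda)-1=i.
\]
For fixed \(k\), its solutions \((\Lambda,k)\) are in bijection with the partitions of \(r_i-1\) with parts bounded by \(n-1\) that are compatible with the given \(k\); the number of such partitions equals \(b_{r_i+k-n-1}\), and summing over \(k\) yields the partial sum \(c_{r_i-1}\). The periodicity of \(r_i\) and \(h_i\) under shifting \(i\) by \(n-1\) accounts for the periodic form of the answer.

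The main obstacle—handled in \cite{char0algebra}—is the verification of the threshold \(i>(n-4)(n-1)\). Below this threshold, a monomial satisfying \(\lev_i(x^\Lambda\Delta_k)=i\) may also satisfy \(\lev_j(x^\Lambda\Delta_k)=j\) for some earlier \(j\), so that it already belongs to \(\mathcal{N}_{i-1}\) and is excluded from \(\mathcal{L}_i\); this subtracts from the raw count and spoils the stated formula. Above the threshold, the spacing between consecutive values \(h_j\) is large enough to rule out any such accidental earlier match, so the refined count \(|\mathcal{L}_i\cap\mathcal{B}_k|\) agrees with the raw enumeration of solutions of the above linear equation, and the stabilisation into the partition-theoretic pattern follows.
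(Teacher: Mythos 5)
Your proposal matches the paper's treatment: the paper gives no independent proof of this statement, but simply transfers the count through the correspondence \(\phi\) (via Proposition~\ref{prop:normalizeridealizer} and the identification of \(\mathcal{B}\) with \(\mathfrak{B}\)) and cites \cite[Corollary~2.12]{char0algebra}, which is exactly your route. Your added sketch of the combinatorics behind the cited corollary (counting solutions of the \(\lev_i\) equation, the summation \(\sum_k b_{r_i+k-n-1}=c_{r_i-1}\), and the role of the threshold \(i>(n-4)(n-1)\)) is consistent with that reference and goes slightly beyond what the paper records.
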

As a consequence, it follows that the sequence $\{|\mathcal{L}_{i}|\}$
is ultimately periodic, meaning there  exist integers $k$ and $j$ such
that $|\mathcal{L}_i| =  |\mathcal{L}_{i+k}|$ for all $i  \geq j$. For
full details, refer to~\cite[Subsection~2.4]{char0algebra}.
\subsection{The normalizer chain in a more general setting}
Proposition~\ref{prop:ranks} is stated  under the assumption
\(R_i=D[x_1,\dots,x_i]\). We now prove that this result is valid in a more  general setting.

				
We set
\[\overbar{W}_n = D\wr_{\bar R_{n-1}}D\wr_{\bar R_{n-2}} \dots
  \wr_{\bar  R_1}D  ,\] where  $\overbar  R_i$  is  a subring  of  the
numerical            polynomials           $P_i$            satisfying
$\bar R_i\otimes_D F=F[x_1,\dots,x_i]$, and we set
\[\widetilde{W}_n=
  F\wr_{\widetilde{R}_{n-1}}F\wr_{\widetilde{R}_{n-2}}           \dots
  \wr_{\widetilde{R}_1}F,\]                                      where
\(\widetilde{R}_i=F[x_1,\dots,x_i]\).                          Clearly
\(W_n \le \overbar  W_n\le \widetilde{W}_n\).  We will  also denote by
\(B_i\), \(\overbar  B_i\), and \(\widetilde{B}_i\) the  \(i\)-th base
subgroup  of   \(W_n\),  \(\overbar  W_n\)   and  \(\widetilde{W}_n\),
respectively.
				
We now consider  the normalizer chain \(\{\overbar{\mathbf{N}}_i\}\)
arising    from    \(T\)    in   \(\overbar    W_n\),    defined    by
\(\overbar{\mathbf{N}}_{-1}=T\)       and       recursively       by
$\overbar{\mathbf{N}}_i=\N_{\overbar
  W_n}(\overbar{\mathbf{N}}_{i-1})$.   Analogously,  we  define  the
normalizer      chain       \(\{\widetilde{\mathbf{N}}_i\}\)      in
$\widetilde{W}_n$.
				Notice that
\begin{equation}\label{ciao}
  (\overbar{\mathbf{N}}_{i}\cap \overbar B_k)\otimes_D F \cong \widetilde{\mathbf{N}}_i\cap \widetilde{B}_k\cong (\mathbf{N}_i\cap B_k)\otimes_D F
\end{equation}
for all \(i\geq -1\) and $1\le k\le n$. Thus,
defining  the  free  rank  \(\frk(M)\)  of  a  \(D\)-module  \(M\)  as
\(\dim_F(M\otimes_D         F)\),        we         obtain        that
\(\frk(\overbar{\mathbf{N}}_i\cap       \overbar        B_k)       =
\frk({\mathbf{N}}_i \cap B_k)\).

From Equation~\ref{ciao} the following   generalization   of
Proposition~\ref{prop:ranks} follows.
\begin{theorem}
  The         free         rank        of         the         quotient
  \((\overbar{\mathbf{N}}_i\cap                             \overbar
  B_k)/(\overbar{\mathbf{N}}_{i-1} \cap \overbar  B_k)\) is equal to
  the          free         rank          \(b_{r_i+k-n-1}\)         of
  \( (\mathbf{N}_i \cap B_k)/({\mathbf{N}  }_{i-1}\cap B_k) \) for
  all \(i>(n-4)(n-1)\). In particular it does not depend on the choice
  of the \(R_j\)'s.
\end{theorem}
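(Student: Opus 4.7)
The plan is to deduce the claim as an essentially formal consequence of Equation~\eqref{ciao} combined with the flatness of \(F\) over \(D\). The key point is that both \(W_n\) and \(\overbar W_n\) embed into the common overring \(\widetilde W_n\), and Equation~\eqref{ciao} asserts that the \(k\)-th base pieces of the normalizers in \(\widetilde W_n\) are obtained from either of those in \(W_n\) or \(\overbar W_n\) by extension of scalars from \(D\) to \(F\).

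First, I would tensor the short exact sequence of \(D\)-modules
\[
0 \longrightarrow \overbar{\mathbf{N}}_{i-1}\cap \overbar B_k \longrightarrow \overbar{\mathbf{N}}_{i}\cap \overbar B_k \longrightarrow \frac{\overbar{\mathbf{N}}_i\cap \overbar B_k}{\overbar{\mathbf{N}}_{i-1}\cap \overbar B_k} \longrightarrow 0
\]
with \(F\) over \(D\). Since \(F\) is the fraction field of \(D\), it is a localization and hence \(D\)-flat, so exactness is preserved. Applying Equation~\eqref{ciao} to identify the first two tensor products with \(\widetilde{\mathbf{N}}_{i-1}\cap \widetilde B_k\) and \(\widetilde{\mathbf{N}}_i\cap \widetilde B_k\) respectively, I obtain
\[
\frk\!\left(\frac{\overbar{\mathbf{N}}_i\cap \overbar B_k}{\overbar{\mathbf{N}}_{i-1}\cap \overbar B_k}\right) \;=\; \dim_F \frac{\widetilde{\mathbf{N}}_i\cap \widetilde B_k}{\widetilde{\mathbf{N}}_{i-1}\cap \widetilde B_k}.
\]

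The same argument, applied verbatim with \(W_n\) in place of \(\overbar W_n\), yields
\[
\frk\!\left(\frac{\mathbf{N}_i\cap B_k}{\mathbf{N}_{i-1}\cap B_k}\right) \;=\; \dim_F \frac{\widetilde{\mathbf{N}}_i\cap \widetilde B_k}{\widetilde{\mathbf{N}}_{i-1}\cap \widetilde B_k}.
\]
Comparing these two identities shows that the free ranks in \(\overbar W_n\) and in \(W_n\) coincide. Proposition~\ref{prop:ranks} identifies this common value with \(b_{r_i+k-n-1}\) whenever \(i>(n-4)(n-1)\), which simultaneously yields the claimed rank formula and its independence from the particular choice of the rings \(\overbar R_j\).

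The only nontrivial ingredient is Equation~\eqref{ciao} itself, which I would expect to be the main obstacle: one has to verify that the iterative construction of the normalizer chain commutes with the scalar extension \(D\hookrightarrow F\). This should be carried out by induction on \(i\); the base case \(\overbar{\mathbf{N}}_{-1}=T=\mathbf{N}_{-1}\) is immediate, and for the inductive step one observes that membership in the normalizer of a saturated subgroup inside each base piece \(\overbar B_k\) is governed by the vanishing of specific commutator components given by the bilinear formulas of Subsection~\ref{subDelta}, a condition preserved in both directions under base change between \(D\) and \(F\). Once Equation~\eqref{ciao} is granted, the theorem becomes the routine flatness computation above.
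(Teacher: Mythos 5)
Your argument is essentially the paper's own: the paper likewise derives the theorem directly from Equation~\eqref{ciao} together with the definition of free rank as $\dim_F(-\otimes_D F)$ and Proposition~\ref{prop:ranks}, with your flatness computation merely spelling out the passage from ranks of the intersections to ranks of the quotients. Your observation that Equation~\eqref{ciao} carries the real content (and your sketch of how to check it) is consistent with the paper, which also states that equation without detailed proof.
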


\section{Free     regular     abelian      normal     subgroups     of
  \(\mathbf{N}_0\)}\label{sec:regular}
In this  section we  find an  analog of  Theorem~6 and  Corollary~3 of
\cite{regular}, where the authors prove that a Sylow \(2\)-subgroup of
\(\AGL(2^n)\) contains  exactly one normal elementary  regular abelian
subgroup distinct from the translation group and conjugated to it.
\begin{lemma}\label{center}
  If  $T<W_n$ is  a regular  abelian  group, then  $T$ intersects  the
  center $Z$ of $W_n$ non-trivially.
\end{lemma}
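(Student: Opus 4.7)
The plan is to invoke the classical observation that the centralizer in a symmetric group of a regular abelian subgroup equals the subgroup itself. Since $T$ acts regularly on $D^n$, fixing a base point $x_0 \in D^n$ identifies $D^n$ with $T$ via $t \mapsto x_0 \cdot t$, turning the action of $T$ on $D^n$ into right multiplication on itself. The centralizer of right multiplication in $\Sym(D^n)$ is left multiplication, which coincides with right multiplication because $T$ is abelian; hence $C_{\Sym(D^n)}(T) = T$.

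The conclusion then follows in one line. Because $W_n$ acts faithfully on $D^n$ (a consequence of the standing assumption that $D^i$ acts faithfully on $R_i$ by translations), we may regard $W_n$ as a subgroup of $\Sym(D^n)$, so $C_{W_n}(T) = T$. Every element of $Z(W_n)$ commutes with every element of $T$, whence $Z(W_n) \subseteq C_{W_n}(T) = T$. Combining this inclusion with the explicit description $Z(W_n) = \{c\Delta_n \mid c \in D\}$ from Remark~\ref{rem:center}, which is non-trivial since $D$ has characteristic zero, one concludes $T \cap Z(W_n) = Z(W_n) \ne 1$.

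There is no real obstacle here: the only delicate point is the classical identification of $C_{\Sym(D^n)}(T)$ with $T$, which might deserve a short justification since the paper has not previously used it. Everything else is routine and reduces to the explicit form of $Z(W_n)$ already computed in Remark~\ref{rem:center}.
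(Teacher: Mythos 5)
Your proof is correct, but it takes a genuinely different (though closely related) route from the paper. The paper's argument looks at the product $TZ$: it is abelian (since $Z$ is central) and transitive (since $T$ is regular), hence regular; as a regular group admits no proper transitive subgroup, $T=TZ$, i.e.\ $Z\le T$, and non-triviality of $Z$ finishes the proof. You instead invoke the classical self-centralizing property of regular abelian subgroups of a symmetric group, getting $Z(W_n)\le C_{W_n}(T)\le C_{\Sym(D^n)}(T)=T$, and then conclude via Remark~\ref{rem:center}. Both arguments prove the stronger containment $Z(W_n)\le T$ and both rest on standard facts about abelian transitive permutation groups (valid for infinite groups as well); the trade-off is that your route requires the left/right regular representation lemma, which, as you note, is not used elsewhere in the paper and would need the short computation $\sigma(g)=\sigma(\rho_g(1))=\rho_g(\sigma(1))=\sigma(1)g$, whereas the paper's route needs only ``abelian $+$ transitive $\Rightarrow$ regular'' and avoids centralizers altogether. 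Two small remarks: faithfulness of $W_n$ on $D^n$ is automatic from the construction of $W_n$ as a permutation group (it does not really depend on the assumption that $D^i$ acts faithfully on $R_i$, which is what Remark~\ref{rem:center} uses to pin down $Z(W_n)$); and non-triviality of $Z(W_n)$ needs only $D\ne 0$, the characteristic-zero hypothesis being stronger than necessary at this point.
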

\begin{proof}
  It is enough to notice that $TZ$ is abelian and transitive, so it is
  regular (and contains $T$).  This means that $T=TZ$.
\end{proof}
Let  $T=\langle  \Delta_1,\ldots,\Delta_n\rangle$   be  the  canonical
regular subgroup considered in the previous section.
\begin{remark}
  Notice that \(\mathbf{N}_0=S\ltimes T\),  where \(S\) is the group
  generated by  \(\{ x_j\Delta_k \mid  1\le j<  k \le n\}\)  acting on
  \(T\)  as the  group  of upper  unitriangular  matrices. Indeed,  if
  $k>i$, then
  \begin{equation*}
    [\Delta_i,x_j\Delta_k]=\begin{cases}
      \Delta_k&\text{if } i=j\\
      1&\text{otherwise}\end{cases}
  \end{equation*}
  Moreover, $\mathbf{N}_0$ acts on $T$ by conjugation, i.e.
  $$(\Delta_i)^{x_j\Delta_k}=\Delta_i\left[\Delta_i,x_j\Delta_k\right]=\begin{cases}
    \Delta_i\Delta_k&\text{if } i=j\\
    \Delta_i&\text{otherwise}\end{cases}$$                         and
  $(\Delta_i)^{\Delta_k}=\Delta_i$. We  denote by $E_{ij}$  the elementary matrix
  with $1$  in the position $(i,j)$  and $0$ elsewhere,   by $e_i$
   the  \(i\)-th vector of the canonical basis, and by \(\mathbb{1}_n\) the \(n\times n\) identity matrix.
  The vector $e_i$ represents the element $\Delta_i$ of $T$, while the
  matrix  $\mathbb{1}_n+\delta_{ji}E_{jk}$  represents the  action  of
  $x_j\Delta_k$ on $\Delta_i$.  In other words
  $$\left(\Delta_i\right)^{x_i\Delta_k}=e_i\cdot
  \left(\mathbb{1}_n+E_{ik}\right)=e_i+e_k=\Delta_i\Delta_k.$$      By
  varying $1<i<k<n$,  these matrices generate the  upper unitriangular
  group   $U$.   Thus,   we   establish   a  surjective   homomorphism
  $\pi\colon\mathbf{N}_0\to  U$ such  that $\ker(\pi)=T$.   Moreover
  the  map  $\tau\colon  U\to   \mathbf{N}_0$,  defined  by  sending
  $E_{jk}\to    x_j\Delta_k$,    is    a    homomorphism    such that
  $\tau\pi$ is the identity.           It           follows          that
  $\mathbf{N}_0\cong U\ltimes T\cong S\ltimes T$.
\end{remark}

Let \(\T\)  be a regular  abelian subgroup normal  in $\mathbf{N}_0$
isomorphic           to           \(D^n\).            We           set
$\T=\langle           \bar{z}_1,\ldots,\bar{z}_n\rangle$          with
$\bar{z}_i= t_i v_i$ where $t_i\in T$ and $v_i\in S$.
\begin{lemma}
  Let  $t_i=\sum_{j=1}^n a_{ij}\Delta_j$,  where  $a_{ij}\in D$.   The
  matrix $(a_{ij})_{i,j=1,\ldots,n}$ is unimodular.
\end{lemma}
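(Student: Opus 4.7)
My plan is to exploit the upper-unitriangular structure of the action of $S$ on $D^n$ to build a surjective homomorphism from $\T$ to $D$, then reduce to an $(n-1)$-dimensional case via induction.

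First I would observe that each $\bar z = tv\in\T$ (with $t\in T$, $v\in S$) acts on $D^n$ as the affine map $x\mapsto (x-t^*)M_v$, where $M_v$ is the upper unitriangular matrix representing $v$. Since the first column of any $M_v$ is $e_1^{T}$, the first coordinate of $0\cdot\bar z$ depends only on the translation part. A short calculation then shows that the map $\psi_1\colon \T\to D$, $\bar z\mapsto (0\cdot\bar z)_1$, is a group homomorphism. Transitivity of $\T$ on $D^n$ forces $\psi_1$ to be surjective, so the ideal generated by the first column of $A$ — namely $(a_{11},\ldots,a_{n1}) = -\psi_1(\T)$ — equals $D$.

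Next I would set $K_1:=\ker\psi_1$. Since $\T\cong D^n$ and $\T/K_1\cong D$ we have $K_1\cong D^{n-1}$, and $K_1$ acts regularly on $I_1:=\{x\in D^n:x_1=0\}\cong D^{n-1}$. Every element of $K_1$ preserves $x_1=0$, hence lies in $B_2\cdots B_n\le W_n$; the quotient homomorphism $B_2\cdots B_n\to W_{n-1}$ obtained by evaluating at $x_1=0$ sends $K_1$ onto a regular abelian subgroup of $W_{n-1}$, isomorphic to $D^{n-1}$, and normal in the corresponding $\mathbf{N}_0'$ of $W_{n-1}$. Normality uses that $\T\triangleleft \mathbf{N}_0$ combined with the fact that elements of $\mathbf{N}_0\cap B_2\cdots B_n$ preserve the first coordinate, so conjugation keeps $K_1$ stable. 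Applying the statement of the lemma inductively to this image yields that the corresponding $(n-1)\times(n-1)$ translation matrix $A'$ is unimodular in $D^{n-1}$.

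Finally, picking any lift $\bar y\in\T$ of a unit in $D$ under $\psi_1$ (possible because $(a_{i1})=D$) together with a $D$-basis $\bar w_1,\ldots,\bar w_{n-1}$ of $K_1$ gives a new $D$-basis of $\T$. With respect to this basis the translation matrix becomes block-triangular with a unit in position $(1,1)$ and $A'$ as its lower-right block, so its determinant is a unit. Because the change of basis in $\T$ is unimodular at the level of linear parts, $\det A$ itself is a unit in $D$. The main obstacle will be verifying the inductive hypothesis cleanly — that is, checking that the image of $K_1$ in $W_{n-1}$ really is a regular abelian normal subgroup of $\mathbf{N}_0'$ of the form required by the lemma, and that the final change of basis from $\{\bar z_i\}$ to $\{\bar y,\bar w_j\}$ does not disturb unimodularity (since the decomposition $\bar z=tv$ transforms non-linearly under basis change, one must track only the linear parts). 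The base case $n=1$ is immediate: $\T=T$ and the scalar $a_{11}$ must be a unit for the translation action to be transitive on $D$.
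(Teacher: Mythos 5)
Your proposal takes a genuinely different route from the paper (an induction on $n$ through the kernel of a first--coordinate homomorphism), but it contains a gap that is fatal over an arbitrary integral domain. From the split exact sequence $0\to K_1\to\T\to D\to 0$ you may only conclude $K_1\oplus D\cong D^n$, i.e.\ that $K_1$ is \emph{stably free} of rank $n-1$; for a general integral domain of characteristic $0$ stably free does not imply free (there are rank-two stably free, non-free modules over $\mathbb{R}[x,y,z]/(x^2+y^2+z^2-1)$, for instance). Without a free basis $\bar w_1,\dots,\bar w_{n-1}$ of $K_1$ you can neither form the $(n-1)\times(n-1)$ matrix $A'$ nor invoke the inductive hypothesis, which is stated only for subgroups isomorphic to a free module; so the induction does not close for $n\ge 3$. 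A second unresolved point is the one you yourself flag as ``the main obstacle'': the assignment $\bar z=tv\mapsto t$ is not a $D$-module homomorphism on $\T$, since the product $(tv)(t'v')$ has translation part $t\cdot(vt'v^{-1})$ and $vt'v^{-1}$ differs from $t'$ by the unitriangular action of $v$. Hence a unimodular change of basis of $\T$ does not simply multiply the matrix of translation parts on the left by a unimodular matrix; ``tracking only the linear parts'' is precisely the step that needs an argument, and none is supplied.

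The paper's proof is a one-step argument that avoids both issues: by regularity of $\T$, for each $j$ there is a unique $\sigma_j\in\T$ with $\sigma_j(0)=e_j$; writing $\sigma_j=\sum_i h_{ji}\bar z_i$ in the basis $\bar z_1,\dots,\bar z_n$ and evaluating at $0$ (where the $S$-components act trivially, so only the translation parts contribute) yields $\sum_i h_{ji}a_{is}=\delta_{js}$. Thus $(h_{ji})$ is a left inverse of $A$ over $D$, and a square matrix with a one-sided inverse over a commutative ring is invertible, so $A$ is unimodular. Your construction of $\psi_1$ already contains the germ of this: evaluating $\psi_1$ on the $\sigma_j$ recovers one column of the left inverse, and doing all $n$ coordinates simultaneously is exactly the paper's argument --- no induction, no kernel, and no freeness question ever arises.
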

\begin{proof}
  The group  $\T$ is  regular, in  particular for  each $j=1,\ldots,n$
  there  exists  a  unique  permutation  $\sigma_j\in  \T$  such  that
  $\sigma_j(0)=e_j$.   Since $\bar{z}_i$'s  generate  $\T$, there  are
  coefficients         $h_{ij}\in         D$         such         that
  $\sigma_j=\sum_{i=1}^n h_{ji}\bar{z}_i$ and as a consequence
  \begin{align*}
    e_j=\sigma_j(0)=\sum_{i=1}^n h_{ji}\bar{z}_i(0)&=\sum_{i=1}^n h_{ji} t_i v_i\left(0\right).
  \end{align*}
  Notice that $v_i(0)=0$ since $v_i\in S$ and $\Delta_s(0)=e_s$, thus
  \begin{align*}
    e_j&=\sum_{i=1}^n h_{ji} t_iv_i\left(0\right)=\sum_{i=1}^n h_{ji} t_i\left(0\right)\\
       &=\sum_{i=1}^n h_{ji}\sum_{s=1}^n a_{is}\Delta_s\left(0\right)=\sum_{i=1}^n h_{ji}\sum_{s=1}^n a_{is}e_s
  \end{align*}
  Hence   $   \sum_{i=1}^n   h_{ji}a_{is}=\delta_{js}   $   for   each
  $j=1,\ldots,n$.
\end{proof}
Let us consider the inverse matrix $(b_{ij})$ of $(a_{ij})$ and let us
define $\bar{t}_i=\sum_{j=1}^n b_{ij}\bar{z}_j$. Notice that
\begin{equation*}
  \bar{t}_i\left(0\right)=\sum_{j=1}^n b_{ij}\bar{z}_j\left(0\right)
  =\sum_{j=1}^n b_{ij}t_j\left(0\right)
  =\Delta_i\left(0\right)=e_i.
\end{equation*}
Since  $T$  is regular, the only  element of $T$ sending  $0$ to $e_j$ is
$\Delta_j$,         so        that         we        can         write
$\T=\langle \bar{t}_1,\ldots,\bar{t}_n\rangle$ where
\begin{equation}\nonumber
  \bar{t}_i=\Delta_i u_i\text{ with }u_i\in S.
\end{equation}
\begin{lemma}\label{delta2}
  If $\Delta_j\in \T$, then $\Delta_{j+1},\dots, \Delta_n\in \T$.
\end{lemma}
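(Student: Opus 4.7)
The plan is to exploit the normality of \(\T\) in \(\mathbf{N}_0\) together with the explicit conjugation formula already recorded in the remark preceding the statement. The point is that whenever \(j<k\), conjugation of \(\Delta_j\) by an element \(x_j\Delta_k\in S\subseteq \mathbf{N}_0\) twists it into the product \(\Delta_j\Delta_k\), and this forces \(\Delta_k\in \T\) by closure of the group \(\T\).

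More concretely, I would fix \(j\) with \(\Delta_j\in \T\) and any index \(k\) with \(j<k\le n\). Since \(S\le \mathbf{N}_0\) and \(\T\trianglelefteq \mathbf{N}_0\), the element \((\Delta_j)^{x_j\Delta_k}\) lies in \(\T\). Using the identity
\[
(\Delta_i)^{x_j\Delta_k}=\begin{cases}\Delta_i\Delta_k &\text{if }i=j,\\ \Delta_i &\text{otherwise,}\end{cases}
\]
already established in the remark, we obtain \((\Delta_j)^{x_j\Delta_k}=\Delta_j\Delta_k\in \T\).

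Since \(\Delta_j\) and \(\Delta_k\) both belong to the (abelian) base constants in \(T\), and in particular \(\Delta_j\in \T\) by hypothesis, the group \(\T\) contains the product \(\Delta_j^{-1}\cdot(\Delta_j\Delta_k)=\Delta_k\). Applying this for each \(k=j+1,\dots,n\) yields the claim. There is essentially no obstacle here: the only subtlety is invoking the correct conjugation formula from \(\mathbf{N}_0=S\ltimes T\) and observing that distinct base-group constants commute, so the cancellation \(\Delta_j^{-1}\Delta_j\Delta_k=\Delta_k\) is legitimate inside \(\T\).
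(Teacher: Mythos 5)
Your proof is correct and is essentially the paper's own argument: the paper simply phrases the same computation as $\Delta_k=[x_j\Delta_k,\Delta_j]\in\T$ (a commutator of an element of the normal subgroup $\T$ with $x_j\Delta_k\in\mathbf{N}_0$), which is exactly your conjugate-then-cancel step $(\Delta_j)^{-1}(\Delta_j)^{x_j\Delta_k}=\Delta_k$. The only cosmetic remark is that the final cancellation needs nothing but closure of $\T$ under products and inverses, so the appeal to commutativity of the base constants is superfluous.
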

\begin{proof}
  It is enough to  notice that $\Delta_i=[x_j\Delta_i,\Delta_j]\in \T$
  for $i=j+1,\dots,n$.
\end{proof}
We      are     now      ready     to      describe     the      group
$\T=\langle \bar{t}_1,\dots,\bar{t}_n\rangle$.
\begin{proposition}
  If $\T$  is a  regular abelian  normal subgroup  of $\mathbf{N}_0$
  isomorphic to \(D^n\), then there exists $c\in D$ such that
  \begin{equation*}
    \T=\langle \Delta_1(c x_1\Delta_n),\Delta_2,\dots,\Delta_n\rangle.
  \end{equation*}
\end{proposition}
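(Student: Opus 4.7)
The plan is to analyze the generators $\bar{t}_i=\Delta_i u_i$ (with $u_i\in S$) of $\T$ obtained above, determining the $u_i$ by exploiting the abelianness of $\T$, its normality in $\mathbf{N}_0$, and the identification $\T\cong D^n$. Let $K=\T\cap T$, and let $L\le S$ denote the image of $\T$ under the projection $\mathbf{N}_0\to\mathbf{N}_0/T\cong S$. The argument proceeds in three stages.

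First, I would pin down the structure of $K$ and $L$. Since both $T$ and $\T$ are normal in $\mathbf{N}_0$, $K$ is a $D$-submodule of $T$ invariant under the conjugation action of $S\cong U$ on $T\cong D^n$; hence $K$ sits in the flag $0\subset\langle\Delta_n\rangle\subset\langle\Delta_{n-1},\Delta_n\rangle\subset\cdots\subset T$ and equals $\langle\Delta_r,\dots,\Delta_n\rangle$ for a unique $r\in\{1,\dots,n+1\}$. Abelianness of $\T$ forces $L$ to centralize $K$, so $L\le C_U(K)$, the set of matrices in $U$ whose rows $r,\dots,n$ coincide with those of the identity. Finally, from $\T\cong D^n$ we deduce the rank identity $\operatorname{rank}_D K+\operatorname{rank}_D L=n$, giving $\operatorname{rank}_D L=r-1$.

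Second, I would rule out the intermediate range $3\le r\le n$. Using Equation~\eqref{parallelism}, the commutator $[\bar{t}_i,\Delta_l]$ lies in $T\cap\T=K$, which constrains every row of each $A_i-I$ (where $A_i\in U$ represents $u_i$) to have support in $\{r,\dots,n\}$. Combining this with the commuting relations among the $A_i$'s and, crucially, with the regularity of $\T$ on $D^n$—so that the orbit map $\bar{t}\mapsto\bar{t}(0)$ from $\T$ to $D^n$ is bijective—the plan is to exhibit, whenever $r\ge 3$, a nontrivial product $\bar{t}_1^{m_1}\cdots\bar{t}_n^{m_n}$ that fixes $0$, contradicting simple transitivity. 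This reduces the analysis to $r=1$ (giving trivially $\T=T$, the $c=0$ case) or $r=2$.

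In the case $r=2$, one has $\Delta_2,\dots,\Delta_n\in\T$, and $L$ is a rank-$1$ abelian normal subgroup of $U$ contained in $C_U(K)=\{I+\sum_{k=2}^n a_k E_{1k}:a_k\in D\}$. Writing a generator of $L$ as $v=I+\sum_{k\ge 2}a_k E_{1k}$ and imposing the normality condition $v^{I+bE_{jl}}\in Dv$ for every $2\le j<l\le n$, a direct matrix computation forces $a_j=0$ for all $j<n$. Thus $L=\langle I+cE_{1n}\rangle$ for some $c\in D$, which in $S$ translates to $u_1=c x_1\Delta_n$; together with $\Delta_2,\dots,\Delta_n\in\T$ this yields $\T=\langle\Delta_1(c x_1\Delta_n),\Delta_2,\dots,\Delta_n\rangle$, as required.

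The main obstacle is the second stage: $U$ does admit abelian normal subgroups of every rank up to $n-1$ (for example $\{I+\sum_{k\ge 2}a_k E_{1k}\}$ itself), so purely group-theoretic constraints on $L$ within $U$ are insufficient. The regularity of the $\T$-action on $D^n$ must be brought in explicitly through a careful orbit-stabilizer argument to exclude the possibility $r\ge 3$.
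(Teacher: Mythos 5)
Your frame (splitting $\T$ into $K=\T\cap T$ and its image $L$ in $S$, then reducing to $K\supseteq\langle\Delta_2,\dots,\Delta_n\rangle$) is a sensible reorganization, and your endgame for $r=2$ --- forcing $u_1=c\,x_1\Delta_n$ from normality --- matches the paper's conclusion. But the proof has two genuine gaps. The smaller one: your claim that a $U$-invariant $D$-submodule of $T\cong D^n$ must be a term $\langle\Delta_r,\dots,\Delta_n\rangle$ of the standard flag is false over a general integral domain; already for $D=\Z$, $n=2$, the submodule $2\Z\Delta_1\oplus\Z\Delta_2$ is invariant under the unitriangular group and is not a coordinate submodule. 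So $K$ cannot be pinned down by invariance alone.

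The serious gap is the step you yourself flag as the main obstacle: excluding $3\le r\le n$ is not carried out, and the mechanism you propose cannot work. Once the generators are normalized to $\bar t_i=\Delta_i u_i$ with $u_i\in S$, every element of $\T$ acts on $D^n$ triangularly, with $i$-th coordinate of the image of $0$ equal to $-m_i$ plus a function of $m_1,\dots,m_{i-1}$; hence the orbit map is bijective onto $D^n$ for \emph{every} choice of the $u_i$, and any abelian subgroup generated by such elements is automatically regular. Regularity therefore yields no contradiction for $r\ge 3$ --- its only usable consequence is $\T\cap S=1$. What actually forces $r\le 2$ is normality of $\T$ in $\mathbf{N}_0$ combined with abelianness: the paper computes $[\bar t_i,x_i\Delta_{i+1}]=\Delta_{i+1}^{-1}\in\T$ for $1<i\le n-2$ (giving $\Delta_3,\dots,\Delta_n\in\T$), and then obtains the delicate remaining inclusion $\Delta_2\in\T$ by playing $[\bar t_1,\bar t_2]=1$ (which gives $c_{12n}=c_{21n}$) against $[\bar t_1,x_1\Delta_2]\in\T$ (which gives $c_{21n}=-c_{12n}$), so that $2c_{12n}=0$ and hence $c_{12n}=0$ because $D$ has characteristic $0$. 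Nothing equivalent to this appears in your outline, so the core of the argument is missing.
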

\begin{proof}
  By  Lemma~\ref{center},  $\Delta_{n}\in  \T$  and so  $u_n$  is  the
  identity          1         of          $\T$.          Analogously
  $\Delta_{i}\in   \T\text{   mod   }(B_n\cdots   B_{i+1})$   and   so
  $u_{i}\in B_n\cdots B_{i+1}$. Hence
  \begin{equation}\label{eccola}
    \bar t_i=\Delta_i\prod_{{i+1\le k\le n}\atop{j<k}}(x_j\Delta_k)^{c_{ijk}}.
  \end{equation}
  In                                                        particular,
  \([\bar                       t_i,                       x_1\Delta_s
  ]=[\Delta_i,x_1\Delta_s]^{u_i}[u_i,x_1\Delta_s]
  =\Delta_s^{\delta_{i1}u_i^{-1}}[u_i,x_1\Delta_s]\in   \T   \).    If
  \(i\ne  1  \) we  have  \([u_i,x_1\Delta_s]\in  \T\cap S\)  so  that
  \([u_i,x_1\Delta_s]=1\).          This          implies         that
  \(u_i=\prod (x_1\Delta_{k})^{c_{i1k}}\) for \(i>1\) and
  \begin{equation*}
    \bar t_i=\Delta_i\prod_{i+1\le k\le n}(x_1\Delta_k)^{c_{i1k}} \text{ for } 1<i<n.
  \end{equation*}
  Let $i\ge 2$ and     $i+1\leq      k<n$,     then     the     commutator
  $[\bar{t}_i,x_k\Delta_n]=[\Delta_i,x_k\Delta_n]^{u_i}[u_i,x_k\Delta_n]=(x_1\Delta_n)^{-c_{i1k}}\in
  \T\cap  S$.  Thus, \((x_1\Delta_n)^{-c_{i1k}}=1\)  and  so
  $c_{i1k}=0$. It means that
  \begin{equation*}
    u_i=(x_1\Delta_n)^{c_{i1n}}\text{ for }1<i<n-1.
  \end{equation*}
  For         $1<i\leq         n-2$        we         get         that
  $[\bar{t}_i,x_i\Delta_{i+1}]=[\Delta_i,x_i\Delta_{i+1}]^{u_i}[u_i,x_i\Delta_{i+1}]=\Delta_{i+1}^{-1}[u_i,x_i\Delta_{i+1}]=\Delta_{i+1}^{-1}\in
  \T$.   In  particular  $\Delta_3\in \T$  and,  by  Lemma~\ref{delta2}, we have
  $\Delta_4,\dots,\Delta_n\in \T$. Hence, we obtain
  \begin{align}\label{eccola2}
    \bar{t}_i&=\Delta_i \text{ for }3\leq i\leq n,\nonumber\\
    \bar{t}_2&=\Delta_2(x_1\Delta_n)^{c_{21n}}
  \end{align}
  and $\bar{t}_1$ as in Equation~\eqref{eccola}.

  When             $1<             k<n$             we             get
  $[\bar{t}_1,x_k\Delta_n]=[\Delta_1,x_k\Delta_n]^{u_1}[u_1,x_k\Delta_n]=\sum_{j<
    k\le  n-1}  (x_j\Delta_n)^{c_{1jk}}\in  \T\cap S$,  hence  by
  regularity $c_{1jk}=0$ and
$$\bar{t}_1=\Delta_1(x_1\Delta_n)^{c_{11n}}\dots (x_{n-1}\Delta_n)^{c_{1,n-1,n}}.$$
					
If      $3\leq      s\leq       n$,      then      the      commutator
$[\bar{t}_1,x_2\Delta_s]=x_2\Delta_n^{c_{1sn}}\in    \T\cap   S$.
Thus     it     has     to      be     the     identity     and     so
$ c_{1sn}=0\text{ for }3\leq s< n$, i.e.
$$u_1=\prod_{{j=1,2}}(x_j\Delta_n)^{c_{1jn}}=(c_{11n}x_1+c_{12n}x_2)\Delta_n.$$

Since $\T$ is abelian, on the one hand we have
\begin{align*}
  1=[\bar{t}_1,\bar{t}_2]&=[\Delta_1u_1,\Delta_2u_2]\\
                         &=[\Delta_1,u_2]^{u_1}[\Delta_1,\Delta_2]^{u_1u_2}[u_1,u_2][u_1,\Delta_2]^{\Delta_2}\\
                         &=(\Delta_n)^{c_{12n}-c_{21n}}[u_1,u_2]
\end{align*}
and so $[u_1,u_2]=1$  and $\Delta_n^{c_{12n}-c_{21n}}=1$. In
particular                          \begin{equation}\label{eq:alphaeq}
  c_{12n}=c_{21n}.
\end{equation}
					
On the other hand, the commutator
\begin{align*}
  [\bar{t}_1,x_1\Delta_2]&=[\Delta_1,x_1\Delta_2]^{u_1}[u_1,x_1\Delta_2]\\
                         &=\Delta_2^{-u_1}(x_1\Delta_n)^{c_{12n}}\\
                         &=\Delta_2^{-1}\Delta_n^{c_{12n}}(x_1\Delta_n)^{c_{12n}}\in \T.
\end{align*}
Since        $\Delta_n\in        \T$,        we        get        that
$\Delta_2(x_1\Delta_n)^{-c_{12n}}\in       \T$       and       by
Equation~\eqref{eccola2}       $c_{21n}=-c_{12n}$.        By
\eqref{eq:alphaeq} it follows that $c_{21n}=c_{12n}=0$, i.e.
$\bar{t}_2=\Delta_2$                                               and
$\bar{t}_1=\Delta_1(x_1\Delta_n)^{c_{11n}}$.
\end{proof}
Let
\(\T_{c}\defeq               \langle              \Delta_1(c
x_1\Delta_n),\Delta_2,\dots,\Delta_n\rangle\)  be as  in the  previous
proposition.  We have the following result.
\begin{corollary}
  If  $2$   is  an   invertible  element  of   $D$,  then   for  every
  $g\in  \mathbf{N}_1$  there  exists \(c_g\in  D\)  such  that
  $T^g=\T_{c_g}$.  Otherwise in  \(\mathbf{N}_1\) there are two
  distinct conjugacy  classes of  regular abelian normal  subgroups of
  $\mathbf{N}_0$ isomorphic to \(D^n\).
\end{corollary}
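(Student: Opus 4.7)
The preceding proposition shows that every regular abelian normal subgroup of \(\mathbf{N}_0\) isomorphic to \(D^n\) has the form \(\T_c\) for a unique \(c\in D\). Since \(T=\T_0\) is normal in \(\mathbf{N}_0\) and \(\mathbf{N}_1=N_{W_n}(\mathbf{N}_0)\), the conjugate \(T^g\) is again regular abelian normal in \(\mathbf{N}_0\) and isomorphic to \(D^n\) for every \(g\in\mathbf{N}_1\); hence there is a unique \(c_g\in D\) with \(T^g=\T_{c_g}\), which already takes care of the existence assertion in both cases.

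To analyze the orbit structure, I would first note that \(\mathbf{N}_0\) fixes \(T\) under conjugation by normality, so the assignment \(g\mapsto c_g\) factors through \(\mathbf{N}_1/\mathbf{N}_0\). Using \(\mathbf{N}_1=\langle\mathcal{N}_1\rangle\) together with a short case analysis on the conditions \(\wt(\Lambda)=k-n+2\), \(\deg(x^\Lambda)\ge 2\), and \(\Lambda\in\mathcal{P}(k-1)\), one verifies that \(\mathcal{L}_1=\{x_1^2\Delta_n\}\); consequently \(\{\alpha x_1^2\Delta_n:\alpha\in D\}\) is a transversal of \(\mathbf{N}_0\) in \(\mathbf{N}_1\), and it suffices to compute how these representatives act on \(T\). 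The decisive commutator computation is \([\Delta_1,\alpha x_1^2\Delta_n]=\alpha((x_1+1)^2-x_1^2)\Delta_n=\alpha(2x_1+1)\Delta_n\), while \([\Delta_i,\alpha x_1^2\Delta_n]=0\) for every \(2\le i\le n\). Thus conjugation by \(\alpha x_1^2\Delta_n\) fixes \(\Delta_2,\dots,\Delta_n\) and sends \(\Delta_1\) to \(\Delta_1\cdot\alpha\Delta_n\cdot 2\alpha x_1\Delta_n\); since the stray factor \(\alpha\Delta_n\) already lies in \(T^{\alpha x_1^2\Delta_n}\), one may absorb it into the canonical form and conclude \(T^{\alpha x_1^2\Delta_n}=\T_{2\alpha}\). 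The \(\mathbf{N}_1\)-orbit of \(T\) is therefore precisely \(\{\T_c:c\in 2D\}\).

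It remains to compare this orbit with the full family \(\{\T_c:c\in D\}\). If \(2\in D^*\) then \(2D=D\), so every \(\T_c\) is \(\mathbf{N}_1\)-conjugate to \(T\); equivalently \(c_g\) ranges over all of \(D\), giving a single conjugacy class. If \(2\notin D^*\) then \(1\notin 2D\), so \(T=\T_0\) and \(\T_1\) lie in distinct \(\mathbf{N}_1\)-orbits, producing the two claimed conjugacy classes. The main technical points will be the case analysis pinning down \(\mathcal{L}_1=\{x_1^2\Delta_n\}\), which has to rule out spurious monomials coming from indices \(k<n\), and the careful bookkeeping of the constant term \(\alpha\Delta_n\) appearing in the commutator, whose absorption into \(T\) is exactly what makes the canonical form \(\T_{2\alpha}\) emerge.
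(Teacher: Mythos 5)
Your proposal is correct and follows essentially the same route as the paper's proof: it identifies $\mathcal{L}_1=\{x_1^2\Delta_n\}$, computes the key commutator $[\Delta_1,\alpha x_1^2\Delta_n]$, and reads off that the $\mathbf{N}_1$-orbit of $T$ is $\{\T_c\mid c\in 2D\}$, with the two cases decided by whether $2$ is invertible. You are in fact slightly more careful than the paper at two points — you justify factoring the action through $\mathbf{N}_0$ and you track the constant term $\alpha\Delta_n$ that the paper silently drops (harmless, since it lies in every $\T_c$) — so no changes are needed.
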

\begin{proof}
  It         is         enough          to         notice         that
  $\mathcal{N}_1\setminus\mathcal{N}_0=\{x_1^2\Delta_n\}$          and
  $\Delta_i^{x_1^2\Delta_n}=\Delta_i$ for $i=2,\dots,n$. Moreover, for
  $d\in D$
  \begin{equation*}
    \Delta_1^{d x_1^2\Delta_n}=\Delta_1[\Delta_1,d x_1^2\Delta_n]=\Delta_1(2d x_1\Delta_n)
  \end{equation*}
  and
  \begin{align*}
    (\Delta_1(x_1\Delta_n))^{d x_1^2\Delta_n}&=\Delta_1(x_1\Delta_n)[\Delta_1(x_1\Delta_n),d x_1^2\Delta_n]\\
                                                  &=\Delta_1(x_1\Delta_n)[\Delta_1,d x_1^2\Delta_n]
                                                    =\Delta_1((2d+1) x_1\Delta_n)
  \end{align*}
  Note that \(\Delta_1\)  appears as a member of the  second family if
  and only if \(2\) is invertible in \(D\).
\end{proof}

\bibliographystyle{abbrv} \bibliography{citation}

\end{document}